\newcommand{\supp}[0]{\mathrm{supp}}
\newcommand{\conv}[0]{\mathrm{conv}}
\newcommand{\height}[0]{\mathrm{ht}}
\newcommand{\wt}[0]{\mathrm{wt}}
\newcommand{\ad}[0]{\mathrm{ad}}
\theoremstyle{plain}
\newtheorem{theorem}{Theorem}[section]
\newtheorem{lemma}[theorem]{Lemma}
\newtheorem{prop}[theorem]{Proposition}
\newtheorem{cor}[theorem]{Corollary}
\newtheorem{thmx}{Theorem}[section]
\theoremstyle{definition}
\newtheorem{definition}[theorem]{Definition}
\newtheorem{example}[theorem]{Example}
\newtheorem{algorithm}[theorem]{Algorithm}
\newtheorem*{p-psp}{parabolic-PSP}
\newtheorem{corollary-C}[theorem]{Corollary to Theorem C}
\newtheorem{remark}[theorem]{Remark}
\newtheorem{observation}[theorem]{Observation}
\numberwithin{equation}{section}
\newtheoremstyle{problem}{5pt}{5pt}{}{}{\normalfont}{\textbf{:}}{.5em}{}
\theoremstyle{problem}
\newtheorem{problem}{\textbf{Problem}}
\newtheorem*{fact}{\textbf{Fact}}
\begin{document}
\title[Minkowski difference weight-formulas for highest weight $\mathfrak{g}$-modules]{Minkowski difference weight formulas}
\author{G. Krishna Teja}

\subjclass[2010]{Primary: 17B10; Secondary: 17B20, 17B22, 17B67, 17B70, 52B20, 52B99.}
\keywords{parabolic Verma module, weights of highest weight modules, integrability of modules.}
\begin{abstract}
Fix any complex Kac--Moody Lie algebra $\mathfrak{g}$, and Cartan subalgebra $\mathfrak{h}\subset \mathfrak{g}$.
We study arbitrary highest weight $\mathfrak{g}$-modules $V$ (with any highest weight $\lambda\in \mathfrak{h}^*$, and let $L(\lambda)$ be the corresponding simple highest weight $\mathfrak{g}$-module), and write their weight-sets $\wt V$. 
This is based on and generalizes the Minkowski decompositions for all $\wt L(\lambda)$ and hulls $\conv_{\mathbb{R}}(\wt V)$, of Khare [{\it J. Algebra.} 2016\ \& {\it Trans. Amer. Math. Soc.} 2017] and Dhillon--Khare [{\it Adv. Math.} 2017 \&  {\it J. Algebra.} 2022].
Those works need a freeness property of the Dynkin graph nodes of integrability $J_{\lambda}$ of $L(\lambda)$:
$\wt L(\lambda)\ -$ any sum of simple roots over $J_{\lambda}^c$ are all weights of $L(\lambda)$.
We generalize it for all $V$, by introducing nodes $J_V$ that record all the lost 1-dim. weights in $V$.
We show three applications (seemingly novel) for all $\big(\mathfrak{g}, \lambda, V\big)$ of our $J_V^c$-freeness:
1)~Minkowski decompositions of all $\wt V$, subsuming those above for simples.
1$'$)~Characterization of these formulas.
1$''$) For these, we solve the inverse problem of determining all $V$ with fixing $\wt V \ =$ weight-set of a Verma, parabolic Verma and $L(\lambda)$ $\forall$ $\lambda$.
2)~At module level (by raising operators' actions), construction of weight vectors along $J_V^c$-directions.
3) Lower bounds on the multiplicities of such weights, in all $V$.
\end{abstract}
\maketitle
\settocdepth{section}
\tableofcontents
\allowdisplaybreaks
\section{Introduction, and statements and a discussion of the Main results}\label{Section 1}
Throughout, $\mathfrak{g}$ is a general complex Kac--Moody Lie algebra (not necessarily symmetrizable), with a fixed Cartan subalgebra $\mathfrak{h}$, the root system $\Delta\subset \mathfrak{h}^*$, the Weyl group $W$ and the universal enveloping algebra $U(\mathfrak{g})$.
The set $\mathcal{I}$ indexes at once: the Dynkin diagram nodes, thereby simple roots $\Pi=\{\alpha_i\ |\ i\in \mathcal{I}\}$ and simple co-roots $\Pi^{\vee}= \{\alpha_i^{\vee}\ |\ i\in \mathcal{I}\}$, and also the Chevalley generators (raising) $e_i$ and (lowering) $f_i$ $\forall$ $i\in \mathcal{I}$. 
$V$ is always a general highest weight $\mathfrak{g}$-module with (any) highest weight $\lambda\in \mathfrak{h}^*$.

In this paper, we study these modules $V$ and their weight-sets $\wt V\subset \mathfrak{h}^*$ (the simultaneous eigenvalues for the $\mathfrak{h}$-action on $V$). 
We write uniform {\it Minkowski decomposition} formulas for $\wt V$ for all $V$ in our main results Theorems \ref{thmA} and \ref{thmC} for resolving Problem \ref{Problem freeness and weight for all V} below, via a notion of {\it free-directions} $J_V^c$ for weights introduced presently \big(see Definitions \ref{Definition freeness for weights} and \ref{Defn support of all holes of V}\big).
Some applications our approach include: 1)~Theorem~\ref{thmB} classifying modules $V$, with classical weight-sets of simples and parabolic Vermas etc; and 2) Proposition~\ref{Enumeration proposition- freeness at module level} on weight-multiplicities for all $V$.
Our weight-formulas (Theorems \ref{thmA} and \ref{thmC}) generalize the recent {\it Minkowski sum formulas} (see Subsection \ref{Subsection PVMs} below) of Khare \cite{Khare_Ad, Dhillon_arXiv, Khare_JA} etc., seemingly known only for $V=$ all  simple highest weight $\mathfrak{g}$-modules $L(\lambda)$ and parabolic Verma $\mathfrak{g}$-modules $M(\lambda, J)$ for all highest weights $\lambda$ . 
We study the actions of raising operators on weight spaces along free $J^c_V$-directions (Proposition \ref{Corollary local Weyl group invariance 1} and Theorem \ref{Corollary local Weyl group invariance 2} in Section~\ref{Section 6}), by which we write certain {\it slice-decompositions} Theorem \eqref{theorem J-slice decomp.} in Section~\ref{Section 6}, extending those for parabolic Vermas \eqref{Int. slice. decomp. PVM} and  simples \eqref{wt formula for simples} due to Khare et al. in Subsection \ref{Subsection PVMs}. 

Below we elaborately discuss: our main results and motivating questions to them, along with clarifying remarks and examples.
We begin by recalling weight-formulas of simples and parabolic Vermas, and those for shapes of weight-hulls in the literature.
Below $\mathbb{Z}_{\geq 0}=$ set of non-negative integers.
$M(\lambda)=$ Verma $\mathfrak{g}$-module with highest weight $\lambda\in \mathfrak{h}^*$ and highest weight vector $m_{\lambda}\neq 0\in M(\lambda)_{\lambda}$;  $M(\lambda)\twoheadrightarrow L(\lambda)$.
$M(\lambda)\twoheadrightarrow V$ denotes a highest weight $\mathfrak{g}$-module $V$ with highest weight $\lambda$.
Next, 
 $V_{\mu} = \ \big\{v\in V \ | \ h\cdot v = \mu(h)v\ \ \forall\ h\in \mathfrak{h} \big\} \  = \  \mu$-weight space in $V$. 
For $J\subseteq \mathcal{I}$, we set  $\Pi_J:=\{\alpha_j \ |\ j\in J\}$.
\subsection*{Weights and hulls, for simples and parabolic Vermas: Symmetries and freeness}\qquad

We begin with $V$ = integrable $L(\lambda)$ ($\lambda$ dominant and integral), important in representation theory, combinatorics and physics, to name a few areas.
Their $W$-{\it symmetric} weight-sets $\wt L(\lambda)$, and their weight-hulls $\conv_{\mathbb{R}}\big(\wt L(\lambda)\big) = \conv_{\mathbb{R}}\big(W \cdot \lambda\big)$ called {\it Weyl polytopes}, particularly  {\it root polytopes} $\conv_{\mathbb{R}}(\Delta)$, have been extensively studied for many decades.
In finite type, starting from 1960s, Satake \cite{Satake}, Borel--Tits \cite{Borel}, Casselman \cite{Casselman}, Vinberg \cite{Vinberg} etc, focused on these hulls, and Cellini--Marietti \cite{Cellini} recently on {\it root polytopes}, describing their shapes, faces, etc. 
The celebrated Weyl--Kac character formula gives the characters of integrable $L(\lambda)$ for symmetrizable $\mathfrak{g}$; and of maximal integrables $L^{\max}(\lambda)$ when $\mathfrak{g}$ is not symmetrizable.

However, for $V=$ non-integrable $L(\lambda)$ \big($\lambda$ not domninant integral\big), their weights $\wt L(\lambda)$, and even hulls $\conv_{\mathbb{R}}\big(\wt L(\lambda)\big)$ -- its
shape, faces and other extremal subsets of convex theoretic importance -- were seemingly not written explicitly until Khare \cite{Khare_JA} in finite type, and Dhillon--Khare \cite{Khare_Ad} and \cite{Dhillon_arXiv} in the general Kac--Moody setting.
The key objects in these works are the well-studied parabolic Verma $\mathfrak{g}$-modules $M(\lambda, J)$ (see Definition \ref{D2.1}), for $\lambda\in \mathfrak{h}^*$ and $J$ the directions of {\it integrability} for $M(\lambda,J)$.
These modules are induced from the maximal integrables $L_J^{\max}(\lambda)$ over parabolic sub algebras $\mathfrak{p}_J\subset \mathfrak{g}$: $M(\lambda,J) \underset{\mathfrak{g}-mod}{\simeq}U(\mathfrak{g})\otimes_{U(\mathfrak{p}_J)} L_J^{\max}(\lambda)$. 
Thus, their weights and weight-hulls are- 
\begin{prop}[Folklore] \label{Proposition wts of parabolic Vermas} 
Let $\mathfrak{g}$ be a Kac--Moody algebra, and fix $\lambda\in \mathfrak{h}^*$. 
Define $J_{\lambda}:=\{j\in \mathcal{I}\ |\ \lambda\big(\alpha_j^{\vee}\big)\in \mathbb{Z}_{\geq 0} \}$ to be the set of all integrable directions for $\lambda$. Then for any $J\subseteq J_{\lambda}$:  
\begin{equation}\label{Eqn Mink diff PVM}
\text{\bf Minkowski difference formula:}\qquad   \wt M(\lambda, J)\  \  =\  \ \wt L_J^{\max}(\lambda)\ -\ \mathbb{Z}_{\geq 0}\big(\Delta^+\setminus \Delta_J^+\big).\hspace*{1cm}
\end{equation}
\begin{center}
\hspace*{1.5cm} $\conv_{\mathbb{R}}\big(\wt M(\lambda, J) \big)\  =\  \conv_{\mathbb{R}}\big(W_J\cdot \lambda\big)- \mathbb{R}_{\geq 0}\big(W_J\cdot \Pi_{\mathcal{I}\setminus J}\big)$\ \  \ \big(\text{see eg.} \cite{Khare_AR, Khare_JA, Khare_Ad}\big),
 \end{center}
where $W_J$ is the parabolic Weyl-subgroup generated by simple reflections $\{s_j \ | \  j\in J\}$.
 \end{prop}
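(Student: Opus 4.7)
The plan is to induce the formula from the PBW theorem applied to the defining presentation $M(\lambda, J) \cong U(\mathfrak{g}) \otimes_{U(\mathfrak{p}_J)} L_J^{\max}(\lambda)$. The Kac--Moody decomposition $\mathfrak{g} = \mathfrak{u}_J^- \oplus \mathfrak{p}_J$, where $\mathfrak{u}_J^-$ is the opposite nilpotent radical with root-space weights $-(\Delta^+ \setminus \Delta_J^+)$, gives a weight-respecting $\mathfrak{h}$-module isomorphism $M(\lambda, J) \cong U(\mathfrak{u}_J^-) \otimes_{\mathbb{C}} L_J^{\max}(\lambda)$. Since $\wt U(\mathfrak{u}_J^-) = -\mathbb{Z}_{\geq 0}(\Delta^+ \setminus \Delta_J^+)$, taking weight decompositions of this tensor product yields the inclusion $\wt M(\lambda, J) \subseteq \wt L_J^{\max}(\lambda) - \mathbb{Z}_{\geq 0}(\Delta^+ \setminus \Delta_J^+)$. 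The reverse inclusion is by freeness: given $\mu \in \wt L_J^{\max}(\lambda)$ and $\nu = \sum_\beta n_\beta \beta \in \mathbb{Z}_{\geq 0}(\Delta^+ \setminus \Delta_J^+)$, I would fix $0 \neq v_\mu \in L_J^{\max}(\lambda)_\mu$ and a PBW monomial $x \in U(\mathfrak{u}_J^-)_{-\nu}$; freeness of the $U(\mathfrak{u}_J^-)$-action on $M(\lambda, J)$ guarantees $x \otimes v_\mu \neq 0$, i.e.\ a weight vector of weight $\mu - \nu$.

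For the convex-hull formula, the next step is to apply $\conv_{\mathbb{R}}(\cdot)$ to the Minkowski difference just proved and combine two known identifications. First, $\conv_{\mathbb{R}}(\wt L_J^{\max}(\lambda)) = \conv_{\mathbb{R}}(W_J \cdot \lambda)$: this uses that $L_J^{\max}(\lambda)$ is $\mathfrak{l}_J$-integrable (because $J \subseteq J_\lambda$), hence $W_J \cdot \lambda \subseteq \wt L_J^{\max}(\lambda)$, and conversely $\wt L_J^{\max}(\lambda) \subseteq \conv_{\mathbb{R}}(W_J \cdot \lambda)$ by the standard invariant-form / highest-weight argument (cf.\ Kac's book). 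Second, $\mathbb{R}_{\geq 0}(\Delta^+ \setminus \Delta_J^+) = \mathbb{R}_{\geq 0}(W_J \cdot \Pi_{\mathcal{I} \setminus J})$: the $\supseteq$ direction is immediate from $W_J$-stability of $\Delta^+ \setminus \Delta_J^+$ together with $\Pi_{\mathcal{I}\setminus J} \subseteq \Delta^+ \setminus \Delta_J^+$; the $\subseteq$ direction follows by writing each $\beta \in \Delta^+ \setminus \Delta_J^+$ in simple roots and absorbing the $\Pi_J$-contributions into $W_J$-shifts of $\alpha_k$ for $k \notin J$.

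The main obstacle I anticipate is technical rather than conceptual, and it lives in the Kac--Moody (possibly non-symmetrizable) setting: one has to handle subtleties in the PBW / tensor-product decomposition of $M(\lambda, J)$ when $L_J^{\max}(\lambda)$ is infinite-dimensional, verify that the $\mathfrak{l}_J$-integrability required for the Weyl-polytope identification of $\wt L_J^{\max}(\lambda)$ is indeed ensured by the hypothesis $J \subseteq J_\lambda$ (and not only by dominance of $\lambda$), and check the cone equality uniformly across all Kac--Moody types without appealing to finite-type combinatorics.
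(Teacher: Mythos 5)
Your argument is correct and is essentially the paper's own (implicit) justification: the proposition is stated as folklore, with the weight formula read off directly from the PBW decomposition $M(\lambda,J)\simeq U(\mathfrak{u}_J^-)\otimes_{\mathbb{C}}L_J^{\max}(\lambda)$ exactly as you do, and the hull formula delegated to the cited references. Your only sketchy point, the inclusion $\mathbb{R}_{\geq 0}(\Delta^+\setminus\Delta_J^+)\subseteq\mathbb{R}_{\geq 0}(W_J\cdot\Pi_{\mathcal{I}\setminus J})$, is true and can be made precise with the paper's own tools (the unit-$J^c$-height roots $\Delta_{J^c,1}$ generate the cone by the P-PSP, and each lies in $\conv_{\mathbb{R}}(W_J\cdot\alpha_k)$ for some $k\notin J$ since they form weight sets of integrable $\mathfrak{g}_J$-modules).
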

In \eqref{Eqn Mink diff PVM}, $\wt L_J^{\max}(\lambda)$ is well-understood, and the second cone is the sums of the roots outside the parabolic subroot system $\Delta_J$, namely the (negative) roots of $\mathfrak{g}$ missed in $\mathfrak{p}_J$.
\begin{definition}\label{Definition freeness for weights}
By the above formulas, we observe for weights of $M(\lambda, J)$:\medskip\\
1)~{\bf Symmetry in $J$-directions :}  $\wt M(\lambda, J)$ is invariant under  $W_J$. \medskip\\
2) $J^c\text{-{\bf Freeness}}$ (exploring which for {\it arbitrary} modules $V$ is the main theme of this paper) {\bf :}
\begin{equation}\label{Eqn freeness in PVMs}
 \qquad\quad    \wt M(\lambda, J)\ \  -\ \  \mathbb{Z}_{\geq 0}S\ \  \ \subseteq\ \wt M(\lambda, J)\qquad\quad \text{for }S=\Pi_{J^c}\quad \text{or any }\  S\ \subseteq\ \Delta^+\setminus \Delta_J^+.
\end{equation}
\end{definition}
In the structure theory of Borcherds--Kac--Moody (BKM) Lie algebras or BKM Lie super algebras $\mathfrak{g}$, {\it free imaginary roots}, and {\it free roots} in partially commutative subalgebras of $\mathfrak{g}$, and their root-multiplicities are well-studied; see the recent papers \cite{Venkatesh, Shushma} and the references there-in. 
Also inspired by this, we study for $\mathfrak{g}$-modules $V$, a freeness property for weights of $V$ (via Problems \ref{Problem freeness and weight for all V}, \ref{Problem all free root subsets for wt V} and \ref{Problem non-vanishing vectors in free-directions for any V}), for applications including computing $\wt V$ (Theorems \ref{thmA}, \ref{thmB} and \ref{thmC}) and studying weight spaces and weight-multiplicities (Proposition \ref{Enumeration proposition- freeness at module level} and Theorem \ref{Corollary local Weyl group invariance 2}) for any $V$. 

We recall the weight-formulas for simples written by Khare and Dhillon, which are important for the present discussion. 
Studying weights of parabolic Vermas, and working with the {\it integrability} $I_V$ of $V$ (see Definition \ref{Defn integrability of V}) and $\mathfrak{p}_{I_V}$-integrability of $V$, Khare et al. computed- 
(1)~weights of all non-integrable simples $L(\lambda)$ $\forall$ $\lambda$ \big(\cite[Theorem D]{Khare_JA} in finite type and \cite[Section 2.1]{Dhillon_arXiv} in general Kac--Moody setting\big).
(2) weight-hulls of every $V$ with any highest weight $\lambda$ \big(\cite{Khare_AR}, \cite[Theorem B]{Khare_JA}, \cite{Khare_Trans} in finite type, and \cite[Section 3.5]{Khare_Ad} and \cite[Theorem 2.9]{Dhillon_arXiv}\big):
\begin{theorem}[Khare et al]\label{Theorem wts of simples and hulls of all V}
Fix any $\mathfrak{g}\text{ and } \lambda$, and recall $ J_{\lambda}$, as in Proposition \ref{Proposition wts of parabolic Vermas}. 
\begin{equation}\label{Wts of simples as PVM}
\ \ \ \wt L(\lambda) \ \  = \ \ \wt M\big(\lambda, J_{\lambda}\big) \ =\  \underbrace{\wt L_{J_{\lambda}}^{\max}(\lambda)}_{ \wt L(\lambda)\ \cap\ \big[\{\lambda\}- \mathbb{Z}_{\geq 0}\Pi_{J_{\lambda}} \big]  = \wt \big(U(\mathfrak{p}_{J_{\lambda}})L(\lambda)_{\lambda}\big)}\ -\ \mathbb{Z}_{\geq 0}\big(\Delta^+\setminus \Delta_{J_{\lambda}}^+ \big).
\end{equation}
The weight-hull and all of its faces, for any $\mathfrak{g}$-module $M(\lambda)\twoheadrightarrow V$ are as follows: 
\begin{equation}\label{Eqn conv wt V}
\conv_{\mathbb{R}}(\wt V) \ \ = \ \ \conv_{\mathbb{R}}\big(M(\lambda, I_V)\big)\ =\   \conv_{\mathbb{R}}\big(W_{I_V}\cdot [\{\lambda\}-\mathbb{Z}_{\geq 0}\Pi_{\mathcal{I}\setminus I_V}] \big) \quad \big(W_{I_V}\text{-symmetric}\big).
\end{equation}
\begin{equation}\label{Eqn all faces of hulls of wtV}
    F_{w, J}\ \ = \ \ \conv_{\mathbb{R}}\big( w\cdot [\{\lambda\}\ - \ \mathbb{Z}_{\geq 0}\Pi_{J}]\big)\qquad \quad \text{for all }\ w\in W_{I_V}\text{ and }J\subseteq \mathcal{I}.
\end{equation}
\end{theorem}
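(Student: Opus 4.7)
The plan is to leverage two uniform reductions throughout: (i) every $M(\lambda) \twoheadrightarrow V$ factors through the parabolic Verma $M(\lambda, I_V)$ since $V$ is $I_V$-integrable (and $I_{L(\lambda)} = J_\lambda$ by the standard $\mathfrak{sl}_2^{(i)}$-analysis at $v_\lambda$), and (ii) Proposition~\ref{Proposition wts of parabolic Vermas} already describes the weights and hull of $M(\lambda, I_V)$. One inclusion in each of \eqref{Wts of simples as PVM}--\eqref{Eqn all faces of hulls of wtV} is then immediate from (i)--(ii); the real content is the reverse inclusion, which is a \emph{freeness / non-vanishing} claim along $J_\lambda^c$- or $I_V^c$-directions.

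For \eqref{Wts of simples as PVM}: the surjection $M(\lambda, J_\lambda) \twoheadrightarrow L(\lambda)$ gives $\wt L(\lambda) \subseteq \wt M(\lambda, J_\lambda)$. For the reverse, I would first identify $U(\mathfrak{p}_{J_\lambda}) v_\lambda \subseteq L(\lambda)$ with $L_{J_\lambda}^{\max}(\lambda)$: it is a $J_\lambda$-integrable highest weight $\mathfrak{p}_{J_\lambda}$-module, hence a nonzero quotient of the simple $L_{J_\lambda}^{\max}(\lambda)$, so equal to it. The PBW factorisation $U(\mathfrak{g}) = U(\mathfrak{n}_{J_\lambda}^-) U(\mathfrak{p}_{J_\lambda})$, with $\mathfrak{n}_{J_\lambda}^-$ spanned by root spaces in $-(\Delta^+ \setminus \Delta_{J_\lambda}^+)$, then gives $L(\lambda) = U(\mathfrak{n}_{J_\lambda}^-) L_{J_\lambda}^{\max}(\lambda)$. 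The main obstacle is the freeness claim: for every $\mu \in \wt L_{J_\lambda}^{\max}(\lambda)$ and every $\xi \in \mathbb{Z}_{\geq 0}(\Delta^+ \setminus \Delta_{J_\lambda}^+)$, $\mu - \xi \in \wt L(\lambda)$; equivalently, $N := \ker(M(\lambda, J_\lambda) \twoheadrightarrow L(\lambda))$ never engulfs an entire weight space. I would attack this by analysing the maximal (singular) vectors of $N$: using Shapovalov-type arguments together with the $J_\lambda$-integrability forced on the quotient, these vectors are constrained to have weights in $\{\lambda\} - \mathbb{Z}_{\geq 0}\Pi_{J_\lambda}$, so the free $\mathfrak{n}_{J_\lambda}^-$-lowerings emanating from $L_{J_\lambda}^{\max}(\lambda)$ survive in $L(\lambda)$.

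For \eqref{Eqn conv wt V} and \eqref{Eqn all faces of hulls of wtV}: steps (i) and (ii) give $\conv \wt V \subseteq \conv(W_{I_V} \cdot [\{\lambda\} - \mathbb{Z}_{\geq 0}\Pi_{\mathcal{I} \setminus I_V}])$. For the reverse, $I_V$-integrability makes $\wt V$ a $W_{I_V}$-set, while for each $i \in \mathcal{I} \setminus I_V$ the $\mathfrak{sl}_2^{(i)}$-submodule generated by $v_\lambda$ is Verma-like in $V$ (either $\lambda(\alpha_i^\vee) \notin \mathbb{Z}_{\geq 0}$, or $i \in J_\lambda \setminus I_V$ and the $f_i$-tower past height $\lambda(\alpha_i^\vee)$ survives, triggering an infinite $\mathfrak{sl}_2^{(i)}$-chain), so $\lambda - n\alpha_i \in \wt V$ for every $n \geq 0$. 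Closing under $W_{I_V}$ and taking convex hulls over $i \in \mathcal{I} \setminus I_V$ yields the reverse containment. The face enumeration \eqref{Eqn all faces of hulls of wtV} is then a convex-geometric consequence of the description $\conv \wt V = \conv(W_{I_V} \cdot \lambda) - \mathbb{R}_{\geq 0}(W_{I_V} \cdot \Pi_{\mathcal{I} \setminus I_V})$: every face is the maximiser of some linear functional $\varphi$ on this Weyl-polytope-plus-cone, and the combinatorics of $W_{I_V}$-chambers together with $J := \{i : \varphi(\alpha_i) = 0\}$ recover the parametrisation $(w, J)$.
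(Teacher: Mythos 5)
This theorem is recalled from the literature (Khare, Dhillon--Khare) and the paper gives no proof of it, so there is nothing internal to compare against; judging your sketch on its own merits, the hull statement \eqref{Eqn conv wt V} is argued along the standard (correct) lines, but your treatment of the key freeness step in \eqref{Wts of simples as PVM} has a genuine gap. You propose to control $N=\ker\big(M(\lambda,J_\lambda)\twoheadrightarrow L(\lambda)\big)$ by ``Shapovalov-type arguments'' showing its maximal vectors have weights in $\{\lambda\}-\mathbb{Z}_{\geq 0}\Pi_{J_\lambda}$. Three problems: (i) Shapovalov-form machinery requires symmetrizability, whereas the theorem is asserted for arbitrary Kac--Moody $\mathfrak{g}$ (and maximal submodules need not be generated by singular vectors there); (ii) the asserted constraint is false -- e.g.\ whenever $(\lambda+\rho)(\beta^\vee)\in\mathbb{Z}_{>0}$ for a non-simple real root $\beta\in\Delta^+\setminus\Delta^+_{J_\lambda}$, the maximal submodule of $M(\lambda)$ contains a singular vector at $s_\beta\bullet\lambda$, whose support meets $J_\lambda^c$; and (iii) even granting the constraint, the conclusion does not follow: a submodule generated by vectors supported on $J_\lambda$ still spreads (under $U(\mathfrak{n}^-)$) into every weight $\mu-\xi$ you are trying to save, so locating singular vectors tells you nothing about whether the weight space $M(\lambda,J_\lambda)_{\mu-\xi}$ is \emph{entirely} killed. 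What is needed is a positive non-vanishing statement, not a constraint on where $N$ starts.

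The mechanism that actually works -- and the one this paper imports and generalizes -- is elementary: for $F\in U(\mathfrak{n}^-_{J_\lambda})$ with $Fv_\lambda\neq 0$ and any exponents $c_i\in\mathbb{Z}_{\geq 0}$, one shows $F\prod_{i\in J_\lambda^c}f_i^{c_i}v_\lambda\neq 0$ by applying $\prod_{i\in J_\lambda^c}e_i^{c_i}$ and using that $e_i$ commutes with $f_j$ ($i\neq j$) while each $e_i^{c_i}f_i^{c_i}$ acts on $v_\lambda$ by a nonzero scalar precisely because $\lambda(\alpha_i^\vee)\notin\mathbb{Z}_{\geq 0}$ for $i\in J_\lambda^c$ (this is Lemma~\ref{Lemma freeness at module level}, reproved in general as Lemma~\ref{L4.2}). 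That gives freeness along $\mathbb{Z}_{\geq 0}\Pi_{J_\lambda^c}$; passing to the full cone $\mathbb{Z}_{\geq 0}(\Delta^+\setminus\Delta^+_{J_\lambda})$ then requires the parabolic partial sum property (Theorem~\ref{Thm P-Psp}) together with an induction on $\height_{J_\lambda}$ and $\mathfrak{sl}_2$-string arguments, exactly as in the proof of Theorem~\ref{thmA}. Separately, your one-line dispatch of the face classification \eqref{Eqn all faces of hulls of wtV} (``maximiser of some linear functional'') conceals the real content -- polyhedrality of the hull and the $(w,J)$-parametrisation are the subject of an entire paper of Dhillon--Khare -- though for a recalled result this is a lesser concern than the gap above.
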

Note, the cases $J=J_{\lambda}$ and $I_V$ reveal the above first-order picture of weights and hulls; we work in this paper with $J_V$ (Definition \ref{Defn support of all holes of V}).
Those Minkowski decompositions were needed in: 1)~Studing the shapes $\conv_{\mathbb{R}}(\wt V)$ \cite[Theorem 2.5]{Khare_Ad} (polyhedrality etc).
2)~Describing their faces of all dimensions \cite{Khare_JA, Khare_Ad}.
3)~Determining face inclusions for Weyl polytopes studied in the works of Satake to Cellini--Marietti \cite[Theorem 1.2]{Cellini} above, and which were completely determined for all $V$ by \cite[Theorem A]{Khare_Trans} and \cite[Theorem 2.2 and Section 5]{Khare_Ad}.
Namely, exploring (i) pairs $(w, J)$ and $(w', J')$ with $F_{w,J}\subseteq F_{w',J'}$, and (ii) the smallest and largest sets $J_{\min}$ and $J_{\max}$ among such $J$, $J'$; extremal rays \cite[Corollary 4.16]{Dhillon_arXiv} etc.
\subsection*{Our freeness problems, and the first Minkowski difference weight-formula for $V$}\qquad

 The above decompositions, hold simultaneously for weights and hulls, for $V=$ all (Vermas) parabolic Vermas and simple $L(\lambda)$s; but for general $V$ they were known only for hulls $\conv_{\mathbb{R}}(\wt V)$.
 Beyond these classical settings $V=$ parabolic Vermas and simples $L(\lambda)$, $\wt V$ seems to be known only in a few cases \cite[Theorems B and D]{Khare_JA} and \cite[Theorem 2.10]{Dhillon_arXiv}.
 Thus, 
we aim at upgrading formulas \eqref{Wts of simples as PVM} and \eqref{Eqn conv wt V} (of hulls) to finer sets $\wt V$, which might be helpful in solving the harder problem of computing $\wt V$ $\forall$ $V$.
For this, we begin with some questions of {\it free directions} for $\wt V$ in Problem \ref{Problem freeness and weight for all V} below, which lead us to study weights spaces in Problem \ref{Problem non-vanishing vectors in free-directions for any V} below and Slice-decompositions \eqref{Slice decomp. for all V} in Section \ref{Section 6} and more. 
We set for convenience 
\[
J\textbf{-supported weights :}\qquad \qquad 
\wt_J V \ \ :=\ \  \wt V\ \cap\ [\{\lambda\}\ -\ \mathbb{Z}_{\geq 0}\Pi_J]   \qquad \quad \text{ for }J\subseteq\mathcal{I}.\qquad \vspace*{-5mm}
\]
\begin{problem}\label{Problem freeness and weight for all V}
  Given $M(\lambda)\twoheadrightarrow V$, for which subsets $J\subseteq \mathcal{I}$:
\begin{itemize}
   \item[(a)] $\wt V \ - \ \mathbb{Z}_{\geq 0}\Pi_J  \   \subseteq   \ \wt V?$
   \smallskip
   \item[(b)] Weakening (a) in view of \eqref{Wts of simples as PVM}--\eqref{Eqn all faces of hulls of wtV}\ {\bf -} \ \  $\{\lambda\}\ - \ \mathbb{Z}_{\geq 0}\Pi_J\   \subseteq   \ \wt V?$
   \smallskip
   \item[(c)] More strongly, in view of \eqref{Wts of simples as PVM}\ {\bf -} \ \ $\wt V \ \ = \ \ \wt_J V \ - \ \mathbb{Z}_{\geq 0} \big(\Delta^+\setminus \Delta_{J}^+ \big)?$
    \end{itemize}
Indeed, all of these question turn-out to be closely related to each other, as we show.
\end{problem}
\begin{remark}\label{Remark Inspirations for Problem 1(b)}
Inspirations for Problem \ref{Problem freeness and weight for all V}(b): (1) A broader question of Amritanshu Prasad on weights lying on each face of $\conv_{\mathbb{R}}(\wt V)$, following the complete descriptions \eqref{Eqn all faces of hulls of wtV}. 
(2) Indeed \cite{Khare_JA, Dhillon_arXiv, Khare_Ad} solve Problem \ref{Problem freeness and weight for all V} for $\big(V,\  J\big)\ = \ \big(L(\lambda),\ J_{\lambda}\big)$.
(2$'$)~We generalizing their technique Lemma~\ref{Lemma freeness at module level} for proving our third main result Theorem \ref{thmC}\eqref{Eqn all free directions for V} \big(solving (b) $\forall$ $V$\big).
(3) And this led us to Proposition \ref{Enumeration proposition- freeness at module level} below {\bf -} some bounds for weight-multiplicities along the free $J$-directions in (b), given by counting some {\it independent} subsets of $J$ (no edges in their induced Dynkin subgraphs). 
\end{remark}
Our first main result \eqref{Eqn A1}: 1) positively solves Problem \ref{Problem freeness and weight for all V}(a)--(c) for $J=J_{\lambda}$; 2)~and thus yields the first uniform weight-formula (novel to our knowledge) for every $V$, generalizing formula \eqref{Wts of simples as PVM} for simples. 
Moreover, the equivalence of questions Problem  \ref{Problem freeness and weight for all V} (a) and (c) for all $J$, follows by \eqref{Eqn A2}.
\begin{thmx}\label{thmA}
For $\mathfrak{g}$ a general Kac--Moody Lie algebra, $\lambda\in\mathfrak{h}^*$, and any $M(\lambda)\twoheadrightarrow{ }V$ and $I\subset \mathcal{I}$:
\begin{equation*}\label{Eqn A1}
\tag{A1} \wt V \ \ =\ \ \wt _{J_{\lambda}}V\ -\ \mathbb{Z}_{\geq 0}\big(\Delta^+\setminus \Delta_{J_{\lambda}}^+\big).
\end{equation*}
  \begin{equation*}\label{Eqn A2}
  \tag{A2} \text{Moreover,}\qquad  \wt_JV\ -\ \mathbb{Z}_{\geq 0}\Pi_{J^c}\ \ \subseteq\ \  \wt V\quad \  \  \iff \quad \  \  \wt V\  \ =\ \ \wt_J V\ -\ \mathbb{Z}_{\geq 0}\big(\Delta^+\setminus \Delta_J^+\big).
  \end{equation*}
\end{thmx}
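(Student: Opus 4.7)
My plan is to establish Theorem~A by first proving the equivalence in (A2) for an arbitrary $J \subseteq \mathcal{I}$, and then deducing (A1) by verifying the hypothesis of (A2) at $J = J_\lambda$ via the module-level freeness lemma. A key preliminary observation is that the inclusion $\wt V \subseteq \wt_J V - \mathbb{Z}_{\geq 0}(\Delta^+ \setminus \Delta_J^+)$ holds unconditionally for every highest weight module $V$: by PBW, $V = U(\mathfrak{n}^-) v_\lambda = U(\mathfrak{n}^-_{\Delta^+ \setminus \Delta_J^+}) \cdot U(\mathfrak{n}^-_J) v_\lambda$, where the right factor $V^{(J)} := U(\mathfrak{n}^-_J) v_\lambda = U(\mathfrak{g}_J) v_\lambda$ has weight set exactly $\wt_J V$, while the left factor only shifts weights by $-\mathbb{Z}_{\geq 0}(\Delta^+ \setminus \Delta_J^+)$. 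Consequently the $(\Leftarrow)$ direction of (A2) is immediate from $\mathbb{Z}_{\geq 0}\Pi_{J^c} \subseteq \mathbb{Z}_{\geq 0}(\Delta^+ \setminus \Delta_J^+)$, and the $(\Rightarrow)$ direction of (A2) reduces to the reverse containment $\wt_J V - \mathbb{Z}_{\geq 0}(\Delta^+ \setminus \Delta_J^+) \subseteq \wt V$.

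For this reverse containment, I would assume the hypothesis $\wt_J V - \mathbb{Z}_{\geq 0}\Pi_{J^c} \subseteq \wt V$, fix $\mu_0 \in \wt_J V$ and $\beta \in \Delta^+ \setminus \Delta_J^+$, and induct on $\mathrm{ht}(\beta)$ to show $\mu_0 - \beta \in \wt V$. The base case $\beta = \alpha_i$ with $i \in J^c$ is exactly the hypothesis. For the step, pick a simple root $\alpha_i$ with $i \in J^c \cap \mathrm{supp}(\beta)$ and use the iterated bracket description of $f_\beta \in U(\mathfrak{n}^-)$, combined with the hypothesis at an intermediate weight, to produce a nonzero vector of weight $\mu_0 - \beta$ in $V$. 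Extension from single roots to sums $s = \sum_\beta k_\beta \beta$ is a straightforward induction on $\sum_\beta k_\beta$.

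For (A1), by the just-proved (A2) it suffices to verify the hypothesis $\wt_{J_\lambda} V - \mathbb{Z}_{\geq 0}\Pi_{J_\lambda^c} \subseteq \wt V$. Fix $\mu_0 \in \wt_{J_\lambda} V$ and $i \in J_\lambda^c$, and take any nonzero $v \in V_{\mu_0}$. Since $\mu_0 \in \lambda - \mathbb{Z}_{\geq 0}\Pi_{J_\lambda}$, PBW shows $v$ is a $\mathbb{C}$-combination of vectors $f_{j_1}\cdots f_{j_r} v_\lambda$ with $j_k \in J_\lambda$; the Chevalley relation $[e_i, f_j] = 0$ for $j \ne i$ together with $e_i v_\lambda = 0$ then yields $e_i v = 0$. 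The module-level freeness Lemma~\ref{Lemma freeness at module level}, via the $\mathfrak{sl}_2^{(i)}$-identity $e_i f_i^{n+1} v = (n+1)\bigl(\mu_0(\alpha_i^\vee) - n\bigr) f_i^n v$, then gives $f_i^n v \ne 0$ for all $n \geq 0$ provided $\mu_0(\alpha_i^\vee) \notin \mathbb{Z}_{\geq 0}$; hence $\mu_0 - n\alpha_i \in \wt V$. The latter hypothesis holds generically because $\lambda(\alpha_i^\vee) \notin \mathbb{Z}_{\geq 0}$ and the off-diagonal Cartan entries $\langle \alpha_j, \alpha_i^\vee\rangle$ are non-positive integers; the remaining subcase $\lambda(\alpha_i^\vee) \in \mathbb{Z}_{<0}$ must be handled by bootstrapping through the surjection $V \twoheadrightarrow L(\lambda)$ and the known formula \eqref{Wts of simples as PVM} for $\wt L(\lambda)$.

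The main obstacle will be the inductive realization in the $(\Rightarrow)$ half of (A2): upgrading the simple-root hypothesis to realizations of $\mu_0 - \beta$ for arbitrary positive roots $\beta$ whose supports mix $J$ and $J^c$. The naive coordinate-wise split $\beta = \beta_J + \beta_{J^c}$ is not enough, since $\mu_0 - \beta_J$ need not lie in $\wt_J V$ in general (witness $\mathfrak{g} = \mathfrak{sl}_3$ with $\lambda(\alpha_1^\vee) = 0$ and $\lambda(\alpha_2^\vee) = -1$, and $V = M(\lambda)/M(\lambda-\alpha_1)$, where $\wt_{J_\lambda} V = \{\lambda\}$ but $\lambda - \alpha_1 - \alpha_2 \in \wt V$ via $f_{\alpha_1+\alpha_2} v_\lambda$). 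One must therefore work with the higher root vectors $f_\beta$ directly, and track their nonvanishing through the quotient from the Verma, rather than attempting to iterate simple-root lowerings alone.
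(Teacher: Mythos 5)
Your overall skeleton --- prove the characterization \eqref{Eqn A2} first and then deduce \eqref{Eqn A1} by verifying its hypothesis at $J=J_\lambda$, with the forward inclusion $\wt V\subseteq \wt_JV-\mathbb{Z}_{\geq 0}(\Delta^+\setminus\Delta_J^+)$ disposed of by PBW --- matches the paper's. But both of the substantive steps have genuine gaps. First, your verification of the hypothesis $\wt_{J_\lambda}V-\mathbb{Z}_{\geq 0}\Pi_{J_\lambda^c}\subseteq\wt V$ runs the $\mathfrak{sl}_2^{(i)}$-computation at the weight $\mu_0$, so the nonvanishing of $f_i^nv$ hinges on $\mu_0(\alpha_i^\vee)\notin\mathbb{Z}_{\geq 0}$; when $\lambda(\alpha_i^\vee)\in\mathbb{Z}_{<0}$ this can fail (the correction $-\sum_jc_j\langle\alpha_j,\alpha_i^\vee\rangle\geq 0$ can push the pairing into $\mathbb{Z}_{\geq 0}$), and your proposed rescue via $V\twoheadrightarrow L(\lambda)$ does not work because $\mu_0\in\wt_{J_\lambda}V$ need not lie in $\wt_{J_\lambda}L(\lambda)$, so formula \eqref{Wts of simples as PVM} says nothing about $\mu_0-n\alpha_i$. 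The correct fix (Lemma \ref{L4.2} in the paper) is to reorder: write $v=Fv_\lambda$ with $F\in U(\mathfrak{n}_{J_\lambda}^-)$, consider $F\,f_i^{n}v_\lambda$ rather than $f_i^nFv_\lambda$, and commute $e_i^{n}$ past $F$ so that the $\mathfrak{sl}_2$-scalar is computed against $\lambda(\alpha_i^\vee)$, which is guaranteed to avoid $\mathbb{Z}_{\geq 0}$ since $i\in J_\lambda^c$. Also note you need a single homogeneous $F$ with $Fv_\lambda\neq 0$, not merely that $v$ is a combination of monomials.

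Second, and more seriously, the forward implication of \eqref{Eqn A2} --- the heart of the theorem --- is not actually proved. Your induction on $\height(\beta)$ with ``the iterated bracket description of $f_\beta$'' and ``tracking nonvanishing through the quotient'' is not a workable mechanism: for general Kac--Moody $\mathfrak{g}$ the root spaces $\mathfrak{g}_{-\beta}$ of imaginary roots are multidimensional, and in any case $f_\beta\cdot V_{\mu_0}$ can vanish in $V$ even when $\mu_0-\beta\in\wt V$, so no bracket bookkeeping on a single root vector can close the induction; your final paragraph correctly diagnoses the obstruction (supports of $\beta$ mixing $J$ and $J^c$) but does not resolve it. The paper's route is different and entirely at the level of weights: one first uses the parabolic partial sum property (Theorem \ref{Thm P-Psp}, Lemma \ref{Lemma PSP in unit I-ht roots}) to reduce to sums of roots $\gamma_1,\ldots,\gamma_n\in\Delta_{J^c,1}$ (unit $J^c$-height), then inducts on $\height_{J}(\sum_t\gamma_t)$; in the induction step one picks $j\in J$ with $\gamma_1-\alpha_j\in\Delta_{J^c,1}$, and in the problematic case $f_jV_\mu=0$ one exploits the resulting finite-dimensionality of $U(\mathfrak{g}_{\{j\}})V_\mu$ to replace $\mu$ and each $\gamma_t$ by $s_j$-reflected versions $\tilde\mu,\tilde\gamma_t$ of strictly smaller $J$-height, applies the induction hypothesis there, and descends back along the $\alpha_j$-string via Lemma \ref{L3.1}. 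Some such reflection device is indispensable, and it is the missing idea in your proposal.
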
 
\begin{remark}
(1) \eqref{Eqn A1} simplifies the problem of writing weights of $M(\lambda)\twoheadrightarrow{ }V$, to the case when $\lambda$ is dominant and integral.
(2) In the spirit of the seminal paper of Fernando \cite{Fernando} and \cite{Futorny} classifying all simple weight $\mathfrak{g}$-modules (and also of Theorem \ref{thmB} below), we note setting $\tilde{V} = $ the induced $\mathfrak{g}$-module $U(\mathfrak{g})\otimes_{U(\mathfrak{p}_{J_{\lambda}})} U(\mathfrak{p}_{J_{\lambda}})V_{\lambda}$ : by the PBW theorem $\tilde{V}\twoheadrightarrow V$ and $\wt \tilde{V}=\wt V' =\wt V$ for all $\tilde{V}\twoheadrightarrow V'\twoheadrightarrow V$.
(3) \eqref{Eqn A2} improves on \eqref{Eqn A1} when more information of the structure of $V$ is known. 
Ex: it applies when $V \ = $ parabolic Verma $M(\lambda,J')$ or $V\ =$ simple $L(\lambda)$, with $J = J'\ \text{or}\ J=  J_\lambda$. 
\end{remark}
Theorem \ref{thmA} is shown in Section \ref{Section proof of thmA}. Our freeness-problems (a)--(c) are at weights-level; i.e., $V$ may not be $\mathfrak{p}_J$-free, see Example \ref{Ex non-Verma V with full wts} (we also need it in the discussions on Theorem \ref{thmB} and Problem \ref{Problem non-vanishing vectors in free-directions for any V}).
\begin{example}\label{Ex non-Verma V with full wts}
    Let $\mathfrak{g}=\mathfrak{sl}_3(\mathbb{C})$ and $\mathcal{I}=\{1,2\}=J$.
    Recall, $f_1\cdot m_0$, $f_2\cdot m_0$ and $f_1^2f_2\cdot m_0$ are {\it maximal} vectors in $M(0)$ (killed by $e_1, e_2$).
    Fix the $\mathfrak{g}$-module $V=\frac{M(0)}{U(\mathfrak{g}) f_1^2f_2\cdot m_0}$.
    Since for $i\in \{1,2\}$, $V_{-\alpha_i}$ $=f_iV_0$ is spanned by maximal vectors,      $U(\mathfrak{g})V_{-\alpha_i}\twoheadrightarrow L(-\alpha_i)$, thereby $\wt L(-\alpha_i) \subseteq \wt V$ $\forall$ $i$. 
    Note, $J_{-\alpha_1}=\{2\}$ and $J_{-\alpha_2}=\{1\}$.
    Although $V$ is not free \big($V\neq M(0)$\big), it has ``full weights'': 
    \[
    \wt M(0) =  -\mathbb{Z}_{\geq 0}\{\alpha_1,\alpha_2\}\  = \ \{0\}\bigsqcup\ \  \bigcup_{\mathclap{i\in \{1,2\}}}  \ \ \Big(\underbrace{\wt L(-\alpha_i) {\small = -\mathbb{Z}_{\geq 0}\{\alpha_i, (\alpha_1+\alpha_2)\}\setminus 
\{0\} }}_{\text{By formula } \eqref{Wts of simples as PVM}}\Big)
   \ \  \ \subseteq  \wt V.\]
    \end{example}
\noindent
Our results explore these phenomena uniformly for all quotients $M(\lambda)\twoheadrightarrow V$ over all types of $\mathfrak{g}$.

Formula \eqref{Eqn A1} was important in a follow-up work \cite{WFHWMRS} for:\quad 
(1)~Classifying weak-faces and $(\{2\};\{1,2\})$-closed subsets of all $\wt V$ \big(some combinatorial analogues for faces of hulls\big), studied by Chari et al. \cite{Chari_contm, Chari_JPAA, Khare_JA, Khare_AR} for writing the characters of Kirillov--Reshetikhin $U_q(\widehat{\mathfrak{g}})$-modules, etc. 
(2)~Showing their equivalence with the (weights lying on) classical faces of $\conv_{\mathbb{R}}(\wt V)$ studied in \cite{Borel, Casselman, Cellini,  Khare_Ad, Dhillon_arXiv, Khare_JA, Satake, Vinberg}.
Now the classical faces essentially being the hulls of weight subsets in Problem \ref{Problem freeness and weight for all V}(b), we explore questions (b) and (c) also in connection to Remark \ref{Remark Inspirations for Problem 1(b)}(1), which might help in writing these combinatorial subsets.
Furthermore, we study simple root strings through weights in Section \ref{Section 6} (Theorem \ref{Corollary local Weyl group invariance 2} etc), motivated by \cite[Problem 4.8]{WFHWMRS} exploring their continuity.   
\subsection*{Intervals of highest weight modules with given weight-sets}\qquad\qquad

Solving Problems \ref{Problem freeness and weight for all V} (a) and (c), needs solving part (b), or equivalently its base case ($J=\mathcal{I}$) Problem \ref{Problem full weights of Vermas}(a) Khare asked, exploring in reverse to the direction of finding weights of given $V$:
\begin{problem}[Khare]\label{Problem full weights of Vermas}
    For a $\mathfrak{g}$-module $M(\lambda)\twoheadrightarrow V$ when is, or what are all $M(\lambda)\twoheadrightarrow V$ with:
    \begin{itemize}
    \item[(a)] $\wt V \ = \ \wt M(\lambda)$?
    \item[(b)] More generally, $\wt V\ =\  \wt M(\lambda, J)$ for (any) fixed $J\subseteq J_{\lambda}?$\quad  In particular, $\wt V \ = \ \wt L(\lambda)$?  
    \end{itemize}
\end{problem}
Our second result Theorem \ref{thmB} below completely solves this, revealing ``intervals'' of $\mathfrak{g}$-modules $V$ with $\wt V = X$, for $X= \wt M(\lambda)\ \text{or}\ \wt M(\lambda, J) \ \forall\  J\subseteq J_{\lambda}$, thereby $X=\wt L(\lambda)$ $\forall$ $\lambda$.
 In this, we are inspired by such posets (peripheral to ours) in Khare \cite[Theorem E]{Khare_JA} for shapes $X=$ $\conv_{\mathbb{R}}\big(\wt V \big)$, and also \cite[Theorems A and 4.3]{Khare_Trans}, \cite[Theorem 2.2]{Khare_Ad} for $X=$ faces of $\wt L(\lambda)$, $\wt M(\lambda, J)$ and $\conv_{\mathbb{R}}(\wt V)$ for any $\lambda, \ V$.
Theorem \ref{thmB} is proved in Section \ref{S4} and is novel to our knowledge. 
 
 Crucial in Theorem \ref{thmB} are two submodules $N(\lambda)\subseteq  N(\lambda,J)\subset M(\lambda)$ defined below;
reminiscent of the construction of the maximal submodule in $M(\lambda)$ yielding $L(\lambda)$, recovered now as $N(\lambda,J_{\lambda})$, see Observation \ref{O4.1} in Section \ref{S4}.
We define for $x=\sum_{i\in \mathcal{I}}x_i\alpha_i\in \mathbb{C}\Pi$, $\supp(x):=\{i\in \mathcal{I}\ |\ x_i\neq 0\}$. 
 \begin{definition}\label{Defn of N(lambda,J)}
Fix $\lambda\in\mathfrak{h}^*$ and $J\subseteq J_{\lambda}$. 
We define $N(\lambda)$ and resp. $N(\lambda,J)$ to be the largest proper submodules of $M(\lambda)$ w.r.t. the below properties {\bf :}
\begin{align*}\label{0property 2}
\tag{N0} \qquad \mu\ \in\ \wt  N(\lambda)\quad \implies\quad \supp(\lambda-\mu)\text{ is not independent.}
\quad\quad \text{And } \ \ N(\lambda,\emptyset):=N(\lambda). 
\end{align*}
\begin{align*} \label{0property 1}
\tag{NJ} \   \text{for }J\neq \emptyset,\ \  \mu \in \wt N(\lambda, J) \  \text{and} \ \supp(\lambda-\mu) \text{ independent}  \implies   \supp(\lambda-\mu)\cap J\neq \emptyset.
\end{align*}
\end{definition}
The existence and useful properties of $N(\lambda,J)$ are discussed in Section \ref{S4}. 
For a quick understanding here, check : 1) the (usual) $\mathfrak{g}$-submodule $\sum\limits_{j\in J}U(\mathfrak{g})f_j^{\lambda\big(\alpha_j^{\vee}\big)+1} m_{\lambda}\  \subseteq\ N(\lambda,J)$; 
2)~all 1-dim. weight spaces of $M(\lambda)$ survive in $\frac{M(\lambda)}{N(\lambda)}$. 
Next, in Example \ref{Ex non-Verma V with full wts}, the submodule $U(\mathfrak{n}^-)f_1^2f_2\cdot m_0$ satisfies \eqref{0property 2}.
\begin{thmx}\label{thmB}
Let $\mathfrak{g}$ be a Kac--Moody algebra, $\lambda\in \mathfrak{h}^*$, $J\subseteq J_{\lambda}$, and $M(\lambda)\twoheadrightarrow V$.
Then 
\[
V_J^{\max}(\lambda) \ := \ M(\lambda, J)\qquad  \text{and}\qquad  V_J^{\min}(\lambda)\ :=\ \frac{M(\lambda)}{N(\lambda, J)}
\]
are resp. the largest and smallest highest weight $\mathfrak{g}$-modules \big(in all the quotients of $M(\lambda)$\big) -- where $V_{\emptyset}^{\max}(\lambda)=M(\lambda)$ and $V_{J_{\lambda}}^{\min}(\lambda)=L(\lambda)$ -- having their weight-sets equalling $\wt M(\lambda, J)$ : 
\begin{equation*}\label{Theorem B V with full weights}
\tag{{B0}}
\wt V\ =\ \wt M(\lambda) \ \    \iff  \ \   
 V \twoheadrightarrow\ V_{\emptyset}^{\min}(\lambda)  \ \ \iff  \ \  \lambda-\mathbb{Z}_{\geq 0}\Pi_I\ \subseteq\ \wt V\  \ \ \forall\ I\  \text{independent in } \mathcal{I}.
\end{equation*}
\begin{equation*}\label{Theorem Binterval for PVM wts}
\tag{{BJ}}
\wt V \ =\  M(\lambda, J)\qquad \iff \qquad  V_J^{\max}(\lambda)\ \ \twoheadrightarrow\ \   V\ \ \twoheadrightarrow \ \ V_J^{\min}(\lambda).
\end{equation*}
\end{thmx}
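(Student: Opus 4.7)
My plan is to prove (BJ) in detail and then deduce (B0) as essentially the specialization to $J=\emptyset$, augmented by the characterization via independent subsets. A recurring key observation is that for any $\mu$ with $\supp(\lambda-\mu)$ independent, $\dim M(\lambda)_\mu = 1$: the only positive roots supported on an independent set $S$ are the simple roots $\alpha_i$ for $i\in S$, so the Kostant partition function equals $1$; moreover any such $\mu$ lies in $\wt M(\lambda,J)$ whenever $\supp(\lambda-\mu)\cap J = \emptyset$, via $\Pi_{J^c} \subseteq \Delta^+\setminus\Delta_J^+$ and Proposition~\ref{Proposition wts of parabolic Vermas}.

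For the implication $\wt V = \wt M(\lambda,J) \Longrightarrow$ sandwich in (BJ), let $K := \ker(M(\lambda)\twoheadrightarrow V)$. For $V_J^{\max}(\lambda) = M(\lambda,J) \twoheadrightarrow V$: the weight $\lambda - (\lambda(\alpha_j^\vee)+1)\alpha_j$ is absent from $\wt M(\lambda,J) = \wt V$ for each $j\in J$ (an easy check via the $\alpha_j$-string in the $W_J$-symmetric $\wt L_J^{\max}(\lambda)$ in Proposition~\ref{Proposition wts of parabolic Vermas}), so $f_j^{\lambda(\alpha_j^\vee)+1} m_\lambda \in K$. For $V\twoheadrightarrow V_J^{\min}(\lambda)$: if some $\mu \in \wt K$ had independent $\supp(\lambda-\mu) \subseteq J^c$, then by the key observation $\mu \in \wt V$ and $M(\lambda)_\mu$ is one-dimensional, forcing $K_\mu = 0$---a contradiction. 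Hence $K$ satisfies~\eqref{0property 1}, so $K \subseteq N(\lambda,J)$.

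For the converse implication the crux is to verify $\wt V_J^{\min}(\lambda) = \wt M(\lambda,J)$ (both containments in the sandwich then become automatic). The inclusion $\wt V_J^{\min}(\lambda) \subseteq \wt M(\lambda,J)$ follows because the submodule $\sum_{j\in J} U(\mathfrak{g})f_j^{\lambda(\alpha_j^\vee)+1} m_\lambda$, that is the kernel of $M(\lambda)\twoheadrightarrow M(\lambda,J)$, satisfies~\eqref{0property 1} by direct inspection (each of its weights $\mu$ has $j \in \supp(\lambda-\mu)\cap J$), and so sits in $N(\lambda,J)$, giving $M(\lambda,J)\twoheadrightarrow V_J^{\min}(\lambda)$. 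For the reverse inclusion I plan to invoke~\eqref{Eqn A2} on $V = V_J^{\min}(\lambda)$ with the same $J$. This requires two inputs: (i) $\wt_J V_J^{\min}(\lambda) = \wt L_J^{\max}(\lambda)$, which follows because the image of $U(\mathfrak{p}_J) m_\lambda$ in $V_J^{\min}(\lambda)$ is a nonzero $\mathfrak{p}_J$-integrable quotient of the $\mathfrak{p}_J$-Verma (using $f_j^{\lambda(\alpha_j^\vee)+1} m_\lambda \in N(\lambda,J)$) and hence equals the simple $L_J^{\max}(\lambda)$; and (ii) the freeness hypothesis $\wt_J V_J^{\min}(\lambda) - \mathbb{Z}_{\geq 0}\Pi_{J^c} \subseteq \wt V_J^{\min}(\lambda)$. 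Granted both,~\eqref{Eqn A2} delivers $\wt V_J^{\min}(\lambda) = \wt L_J^{\max}(\lambda) - \mathbb{Z}_{\geq 0}(\Delta^+\setminus\Delta_J^+) = \wt M(\lambda,J)$.

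The hardest step is verifying input (ii). My plan is to adapt the freeness-at-module-level technique used in the proof of Theorem~\ref{thmA} (the lemma referenced in Remark~\ref{Remark Inspirations for Problem 1(b)}(2$'$)) to the parabolic setting. Concretely, for $\nu \in \wt L_J^{\max}(\lambda)$ and $\beta \in \mathbb{Z}_{\geq 0}\Pi_{J^c}$, starting from a nonzero $v_\nu \in V_J^{\min}(\lambda)_\nu$ I aim to produce a nonzero vector of weight $\nu - \beta$ via iterated lowering operators in $J^c$-directions, exploiting two facts: first, $v_\nu$ is annihilated in $V_J^{\min}(\lambda)$ by every $e_i$, $i\in J^c$, since the adjacent weights $\nu+\alpha_i$ are not in $\wt V_J^{\min}(\lambda)$; second, any putative descent of a PBW lift $f^{\beta}\tilde v_\nu$ into $N(\lambda,J)$ can be raised back (modulo $N(\lambda,J)$) to produce an independent-$J^c$-supported weight of $N(\lambda,J)$, contradicting~\eqref{0property 1}. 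Finally, (B0) is the case $J=\emptyset$ of (BJ), since $V_\emptyset^{\max}(\lambda) = M(\lambda)$ auto-surjects onto every $V$; the additional equivalent condition $\lambda - \mathbb{Z}_{\geq 0}\Pi_I \subseteq \wt V$ for all independent $I$ is equivalent, again by the key observation, to $\ker(M(\lambda)\twoheadrightarrow V)$ satisfying~\eqref{0property 2}.
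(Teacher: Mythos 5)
Your two forward implications are correct and essentially identical to the paper's: absence of the weights $\lambda-(\langle\lambda,\alpha_j^{\vee}\rangle+1)\alpha_j$ forces $f_j^{\langle\lambda,\alpha_j^{\vee}\rangle+1}m_{\lambda}\in K$, and one-dimensionality of $M(\lambda)_{\mu}$ for independent $\supp(\lambda-\mu)$ forces $K$ to satisfy \eqref{0property 1}. The gap is in the converse, precisely at your input (ii), which is where all the content of the theorem lives. First, a structural circularity: for $J=\emptyset$ one has $\wt_{\emptyset}V_{\emptyset}^{\min}(\lambda)=\{\lambda\}$, so your (ii) specializes to $\lambda-\mathbb{Z}_{\geq 0}\Pi\subseteq\wt V_{\emptyset}^{\min}(\lambda)$, which \emph{is} the converse of \eqref{Theorem B V with full weights}; you cannot both deduce \eqref{Theorem B V with full weights} from \eqref{Theorem Binterval for PVM wts} and leave (ii) to an adaptation of the $J_{\lambda}^c$-freeness lemma. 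Second, the proposed mechanism for (ii) does not work. The raising trick of Lemma~\ref{L4.2} produces the scalar $\prod_{k=1}^{c}k\big(\langle\nu,\alpha_i^{\vee}\rangle-(k-1)\big)$, which is guaranteed nonzero only when $\langle\nu,\alpha_i^{\vee}\rangle\notin\{0,1,\dots,c-1\}$; for $i\in J_{\lambda}\cap J^c$ (and even for $i\in J_{\lambda}^c$ once $\nu\neq\lambda$, since $-\langle\lambda-\nu,\alpha_i^{\vee}\rangle\geq 0$ can push the pairing into $\mathbb{Z}_{\geq 0}$) the scalar can vanish, so $e_i^{c}f_i^{c}v_{\nu}=0$ tells you nothing. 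Your fallback, ``raise the putative element $f^{\beta}\tilde v_{\nu}$ of $N(\lambda,J)$ back to an independent-$J^c$-supported weight,'' is exactly the difficulty rather than a proof: $N(\lambda,J)$ is a submodule, so raising stays inside it, but there is no reason the raising does not simply give $0$ before reaching a weight with independent support (a nonzero vector of $M(\lambda)$ lying in the maximal submodule cannot be raised back to $m_{\lambda}$, and $f^{\beta}\tilde v_{\nu}$ may well lie there).

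For contrast, the paper avoids (ii) entirely. It first proves \eqref{Theorem B V with full weights} directly, by induction on $\height(\lambda-\mu)$: when $\supp(\lambda-\mu)\cap J_{\lambda}$ is independent it uses Lemma~\ref{L4.2}, and otherwise it picks adjacent nodes $i,j\in\supp(\lambda-\mu)\cap J_{\lambda}$ with $c_i\geq c_j$, applies the induction hypothesis to $\mu+c_j\alpha_j$, and descends along the $\alpha_j$-string via Lemma~\ref{L3.1} using $\langle\mu+c_j\alpha_j,\alpha_j^{\vee}\rangle\geq c_i\geq c_j$. For \eqref{Theorem Binterval for PVM wts} it then intersects $N(\lambda,J)$ with the $\mathfrak{g}_{J^c}$-Verma $U(\mathfrak{g}_{J^c})m_{\lambda}$, observes this intersection satisfies \eqref{0property 2} over $\mathfrak{g}_{J^c}$, and invokes \eqref{Theorem B V with full weights} \emph{for the subalgebra} $\mathfrak{g}_{J^c}$ to get only the top slice $\lambda-\mathbb{Z}_{\geq 0}\Pi_{J^c}\subseteq\wt V_J^{\min}(\lambda)$; each surviving vector there is $\mathfrak{g}_J$-maximal, so it generates a highest weight $\mathfrak{g}_J$-module covering $L_J(\lambda-\xi)$, and the integrable slice decomposition \eqref{Int. slice. decomp. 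PVM} finishes the argument. If you want to salvage your route through \eqref{Eqn A2}, you must first supply an independent proof of \eqref{Theorem B V with full weights} (or of the full freeness $\wt_J V_J^{\min}(\lambda)-\mathbb{Z}_{\geq 0}\Pi_{J^c}\subseteq\wt V_J^{\min}(\lambda)$) along these height-induction lines; as written, the key step is asserted, not proved.
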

\subsection*{All Minkowski difference weight-formulas for every highest weight module}\qquad\qquad

Aided by Theorems \ref{thmA} and \ref{thmB}, our main contribution in the paper Theorem~\ref{thmC} (proved in Section~\ref{Section 5}), solves Problem \ref{Problem freeness and weight for all V}(c), thereby (a).
The key tool here is, working with the below defined set $J_V\subseteq J_{\lambda}$, 
which accounts for all the minimal 1-dim. weight spaces lost in the passage $M(\lambda) \twoheadrightarrow V$:
\begin{definition}\label{Defn support of all holes of V}
        Given $M(\lambda)\twoheadrightarrow V$ over any Kac--Moody $\mathfrak{g}$, we define $J_V$ to be the {\bf union of all} 
        \begin{equation}
        I \ \ minimal \   
 independent \ \text{ subset of }\ J_{\lambda},\quad  \text{ with }\ \  \lambda\ -\ \sum_{i\in I}\big[\lambda(\alpha_i^{\vee})+1\big]\alpha_i\ \  \boldsymbol{\notin \wt V}. 
        \end{equation}
                   \end{definition}
                   See Section \ref{Section 5} Definition \ref{Defn of support of all holes in Section 5} for a meticulous description of $J_V$, and the discussions from Example~\ref{Ex type A1xA1} to Example~\ref{Ex of JV} on its properties. 
                \begin{remark}\label{Remark JV generalizing integrability}
                  Our tool $J_V$ (its study and applications  seem novel in the literature) generalizes the integrability $I_V$ (Definition \ref{Defn integrability of V}) that was crucial for weight-formulas of Khare et al. above:           1)~$J_{L(\lambda)}\ =\ J_{\lambda}$ and 2) $I_V\subseteq J_V$. 
                  See Example \ref{Ex of JV} for more settings.
                  Note $J_V=\emptyset$ in Example \ref{Ex non-Verma V with full wts}.
                  \end{remark}
                  \begin{thmx}\label{thmC}
        Let $\mathfrak{g}$ be a general Kac--Moody algebra and $\lambda\in \mathfrak{h}^*$.
        For any $\mathfrak{g}$-module $M(\lambda)\twoheadrightarrow V$, we have the following refinement of weight-formula \eqref{Eqn A1}, which was for $J=J_{\lambda}$, to $J=J_V$: 
   \begin{equation*}\label{Minkowski formula for supp of holes}
   \tag{CV}
    \wt V \ = \ \wt V \cap \big[ \lambda-\mathbb{Z}_{\geq 0}\Pi_{J_V} \big]\ - \ \mathbb{Z}_{\geq 0}\left(\Delta^+\setminus \Delta^+_{J_V}\right).
    \end{equation*} 
    More strongly, for any $J\subseteq \mathcal{I}$, we have (completely resolving Problem \ref{Problem freeness and weight for all V}(c)):
     \begin{equation*}\label{All Minknowski formulas}
     \tag{CJ}
    \wt V \ = \ \wt V \cap \big[ \lambda-\mathbb{Z}_{\geq 0}\Pi_J \big]\ - \ \mathbb{Z}_{\geq 0}\left(\Delta^+\setminus \Delta^+_J\right) \qquad \iff \qquad J\supseteq J_V.
    \end{equation*}
    Therefore, we have an interval of subsets $J_V\ \subseteq\  J\ \subseteq\  \mathcal{I}$ for which the above decompositions hold (similar to the intervals for each face of hulls).\\ 
Now here are some free simple-root directions for $\wt V$ (concerning Problem \ref{Problem freeness and weight for all V}(b)):
    \begin{equation*}\label{Eqn all free directions for V}
    \tag{C0}
    \{\lambda\}\ - \ \mathbb{Z}_{\geq 0}\Pi_J \ \ \subseteq \ \
 \wt V \qquad \text{ for all }\  J\ \subseteq J_V^c.
 \end{equation*}
      \end{thmx}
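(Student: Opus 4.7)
The plan is to derive Theorem \ref{thmC} by first establishing (C0) as the technical core, and then combining it with the equivalence (A2) of Theorem \ref{thmA} to obtain (CJ), of which (CV) is the instance $J = J_V$.

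For (C0), I would show $\{\lambda\} - \mathbb{Z}_{\geq 0}\Pi_J \subseteq \wt V$ for each $J \subseteq J_V^c$ by reducing to the case where $J$ is independent. When $J$ is independent, the single PBW monomial $\prod_{j\in J} f_j^{n_j}\, v_\lambda$ spans $M(\lambda)_{\lambda - \sum n_j \alpha_j}$ (since the $f_j$ with $j\in J$ pairwise commute in $U(\mathfrak{g})$), so the target weight lies in $\wt V$ exactly when this monomial is nonzero in $V$. The defining property of $J_V^c$ forces that for every minimal independent $I \subseteq J$ inside $J_\lambda$, the critical weight $\mu_I := \lambda - \sum_{i\in I}(\lambda(\alpha_i^\vee)+1)\alpha_i$ survives in $V$; coupled with the automatic freeness along $J_\lambda^c$-directions (where the Verma string embeds), this feeds into an induction on $|J|$—organized by nested minimal independent subsets and the partially commutative structure of $\bigoplus_{j\in J}\mathbb{C}f_j$—to yield nonvanishing of every such PBW monomial in $V$. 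This is the analogue, for general $V$, of the argument used by Khare--Dhillon for $V = L(\lambda)$ at $J = J_\lambda$, as packaged in Lemma~\ref{Lemma freeness at module level}.

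Given (C0), the forward direction of (CJ) is a clean extraction. Suppose the formula $\wt V = \wt_J V - \mathbb{Z}_{\geq 0}(\Delta^+\setminus\Delta^+_J)$ holds and, for contradiction, choose $j \in J_V \setminus J$ together with a minimal independent $I \subseteq J_\lambda$ containing $j$ such that $\mu_I \notin \wt V$. Minimality of $I$ gives $\mu_{I \cap J} \in \wt V$, and since $\supp(\lambda - \mu_{I \cap J}) = I \cap J \subseteq J$, we have $\mu_{I \cap J} \in \wt_J V$. The set $I \setminus J \ni j$ is a nonempty subset of $J^c$, so each $\alpha_i$ for $i \in I \setminus J$ lies in $\Delta^+ \setminus \Delta^+_J$; writing
\[
\mu_I \ =\ \mu_{I\cap J} \ -\ \sum_{i \in I \setminus J}(\lambda(\alpha_i^\vee)+1)\alpha_i
\]
exhibits $\mu_I$ in the RHS of the assumed formula, contradicting $\mu_I \notin \wt V$. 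For the reverse direction, assume $J \supseteq J_V$, so $J^c \subseteq J_V^c$. By (C0), $\{\lambda\} - \mathbb{Z}_{\geq 0}\Pi_{J^c} \subseteq \wt V$, and the same PBW/independent-subset argument applied relative to any $\nu \in \wt_J V$ (using that the minimal-independent-subset data encoding $J_V$ localizes correctly under the $\mathfrak{p}_J$-action near $\nu$) upgrades this to $\wt_J V - \mathbb{Z}_{\geq 0}\Pi_{J^c} \subseteq \wt V$. Invoking (A2) of Theorem \ref{thmA} then yields $\wt V = \wt_J V - \mathbb{Z}_{\geq 0}(\Delta^+\setminus\Delta^+_J)$, and specializing $J = J_V$ gives (CV).

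The main obstacle is (C0): bridging from the pointwise data encoded by $J_V$—non-vanishing of the specific critical weights $\mu_I$ for minimal independent $I$—to the global non-vanishing of \emph{every} PBW monomial $\prod_{j\in J} f_j^{n_j}\, v_\lambda$ for arbitrary $J \subseteq J_V^c$ and arbitrary $n_j \geq 0$. A priori, a single unfortunate relation imposed by the submodule $K \subseteq M(\lambda)$ with $V = M(\lambda)/K$ could cause such a monomial to vanish even if the "boundary" critical weights do not. Ruling this out requires the combinatorics of minimal independent subsets of the Dynkin diagram, together with the commutativity of $\{f_j\}_{j\in J}$ on independent $J$, to be threaded through an induction controlled by the structure of $J_V$—precisely what the referenced Lemma~\ref{Lemma freeness at module level} is designed to supply.
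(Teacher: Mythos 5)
Your overall architecture is the right one and matches the paper's: establish the freeness inclusion $\wt_J V - \mathbb{Z}_{\geq 0}\Pi_{J^c}\subseteq \wt V$ and then invoke \eqref{Eqn A2} to convert it into the Minkowski decomposition. Your derivation of the forward implication of \eqref{All Minknowski formulas} (decomposition $\Rightarrow$ $J\supseteq J_V$) from minimality of the independent sets defining $J_V$ is correct, and \eqref{Eqn all free directions for V} does follow roughly as you sketch: in the paper it is a consequence of Theorem \ref{thmB}\eqref{Theorem B V with full weights} applied to the $\mathfrak{g}_{J_V^c}$-highest weight module $U(\mathfrak{g}_{J_V^c})\cdot v_\lambda$, together with Observation \ref{O4.1}(6) and the definition of $J_V$.

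However, there is a genuine gap at the decisive step. You pass from \eqref{Eqn all free directions for V}, which is freeness of the $J^c$-directions \emph{from the single weight $\lambda$}, to the inclusion $\wt_J V - \mathbb{Z}_{\geq 0}\Pi_{J^c}\subseteq\wt V$, which requires freeness from \emph{every} $\nu\in\wt_J V$, by asserting that the minimal-independent-subset data encoding $J_V$ ``localizes correctly under the $\mathfrak{p}_J$-action near $\nu$.'' That phrase carries no argument, and it is exactly where the difficulty of the theorem is concentrated. The set $J_V$ records obstructions at the one-dimensional weight spaces hanging off $\lambda$; for $\nu\precneqq\lambda$ the space $V_\nu$ can be high-dimensional, need not be generated by a vector behaving like a parabolic-Verma highest weight vector, and admits no a priori analogue of $J_V$ based at $\nu$. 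The paper's proof of \eqref{Minkowski formula for supp of holes} devotes Steps 1--6 to precisely this point: an induction on $\height(\lambda-\mu+x)$ which, for $\mu\in\wt_{J_V}V$ with non-independent support, produces a node $i$ and a higher weight $\widehat\mu\in\wt_{J_V}V$ with $\langle\widehat\mu,\alpha_i^{\vee}\rangle>0$ and $\mu\in\big[s_i\widehat\mu,\ \widehat\mu\big]$, so that Lemma \ref{L3.1} can transport the subtraction of $x$ down the $\alpha_i$-string. Producing $\widehat\mu$ requires the local composition series of Lemma \ref{Lemma local composition series for V}, descending chains of maximal vectors $f_{i_r}^{p_r}\cdots f_{i_1}^{p_1}v_\lambda$ generating submodules $V_r\twoheadrightarrow L(s_{i_r}\cdots s_{i_1}\bullet\lambda)$, and the commutator identity of Lemma \ref{Lemma Serre} to control how monomials in the $f_j$ interact across adjacent nodes. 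None of this is replaced by anything in your proposal, so the reverse implication of \eqref{All Minknowski formulas}, and hence \eqref{Minkowski formula for supp of holes} itself, remains unproved.
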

        \begin{remark}
          Philosophically, Theorem \ref{thmC}\eqref{All Minknowski formulas} says $J_V$ is the ``smallest union of all the obstacles'' to our desired Minkowski decompositions for $\wt V$ w.r.t. $J\subseteq \mathcal{I}$  \big(in the sense of Problem \ref{Problem freeness and weight for all V}(c)\big).
        I.e., if $\wt V$ does not admit the Minkowski decomposition w.r.t. some $J\subseteq \mathcal{I}$, then $J\cap J_V\neq \emptyset$ by \eqref{Theorem B V with full weights} (see the discussions in Section~\ref{Section 5}). 
        We cannot extend \eqref{Minkowski formula for supp of holes} for $J$ further in $J_V$, see
          Example \ref{Ex type A1xA1}.
      \end{remark}
      \begin{cor}
      Theorem \ref{thmC} \eqref{Minkowski formula for supp of holes} yields an explicit uniform weight-formula for every $\mathfrak{g}$-module $V$ (and $\lambda$) in the following cases :
      (1) Assume $J_V$ to be independent. 
      So, $U\big(\mathfrak{n}_{J_V}^-\big)$ is the polynomial algebra $R=\mathbb{C}\big\{f_j\ |\ j\in J_V \big\}$, and every $V$ (being a weight module) is $\frac{R}{I}$ for some monomial ideal $I$ in $R$.
      This simplifies the problem of determining $\wt_{J_V}V$, thereby $
      \wt V$, to a question in commutative algebra, of finding surviving monomials in the quotient $\frac{R}{I}$.
      (2) Particularly when rank of  $\mathfrak{g}$ is 2.\\
    Importantly, obstacles to extend this result for over $\mathfrak{g}$ of rank 3, arise in the following manner:
      \begin{example}\label{Example for weights inside JV}
      $\mathfrak{g} = \mathfrak{sl}_4(\mathbb{C})$ and $\mathcal{I}= \big\{1,2,3\big\}$ (successive nodes adjacent in the Dynkin diagram).
      So $f_1f_3$ and $f_2$ do not commute.
      Consider $M(0)\ \twoheadrightarrow\  V\  = \  \frac{M(0)}{  U(\mathfrak{n}^-)\ \big\{ \ f_1f_3\cdot m_0, \ f_2\cdot m_0\  \big\}}$,
       $J_V =  \mathcal{I}$.  
      \end{example}
      \end{cor}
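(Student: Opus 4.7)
The plan is to treat the two asserted cases separately, in each reducing to Theorem \ref{thmC}\eqref{Minkowski formula for supp of holes}, which already expresses $\wt V$ in terms of $\wt_{J_V} V$.

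For part (1), I would start by invoking the Serre relations $[f_i, f_j] = 0$ for nodes $i, j$ not joined in the Dynkin diagram. The independence hypothesis on $J_V$ then forces $\{f_j : j \in J_V\}$ to pairwise commute, which identifies $U(\mathfrak{n}_{J_V}^-)$ canonically with the polynomial algebra $R := \mathbb{C}[f_j : j \in J_V]$. The PBW theorem gives an embedding $R \hookrightarrow M(\lambda)$, $r \mapsto r \cdot m_\lambda$, whose image spans every weight space $M(\lambda)_\mu$ with $\mu \in \lambda - \mathbb{Z}_{\geq 0}\Pi_{J_V}$. Pushing this forward through the quotient $V = M(\lambda)/K$ identifies the $J_V$-supported part of $V$ with $R/I$, where $I := \{r \in R : r \cdot m_\lambda \in K\}$. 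The key next observation is that $R$ is weight-graded with every graded piece one-dimensional (spanned by a single monomial), so every weight-graded ideal is monomial; as $I$ inherits weight-grading from $K$, it is monomial, and $\wt_{J_V} V$ is precisely the weights of the surviving monomials in $R/I$, a purely commutative-algebra task. Substituting into \eqref{Minkowski formula for supp of holes} recovers the full $\wt V$.

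For part (2), I would inspect rank-$2$ possibilities: $\mathcal{I} = \{1, 2\}$, and the subsets $\emptyset, \{1\}, \{2\}$ are automatically independent; the fourth subset $\{1,2\}$ is independent iff the Dynkin diagram is disconnected ($\mathfrak{g} = A_1 \times A_1$). In all such situations part (1) applies to $J_V$ directly. The remaining possibility, $J_V = \mathcal{I}$ with connected Dynkin diagram, makes $\Delta^+ \setminus \Delta^+_{J_V} = \emptyset$, so \eqref{Minkowski formula for supp of holes} degenerates to the tautology $\wt V = \wt V$; I would flag this as a boundary case rather than genuine content.

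The hard part -- and the obstruction to pushing the corollary beyond independent $J_V$ -- is exactly what Example \ref{Example for weights inside JV} exhibits. For $\mathfrak{sl}_4$ with linear Dynkin diagram and $J_V = \{1,2,3\}$, the Serre relation linking $f_2$ with $f_1, f_3$ prevents $U(\mathfrak{n}_{J_V}^-)$ from being a polynomial ring; its weight-graded pieces can have dimension $>1$, and weight-graded left ideals are no longer forced to be monomial. Any extension therefore requires new input controlling the noncommutative subalgebra $U(\mathfrak{n}_{J_V}^-)$ modulo $V$-relations, which falls outside the scope of the present corollary.
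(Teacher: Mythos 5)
Your case (1) is correct and is essentially the argument the corollary itself sketches: independence of $J_V$ kills the Serre relations among the $f_j$, $j\in J_V$, so $U(\mathfrak{n}^-_{J_V})\cong R$ is polynomial; each graded piece $R_{-\nu}$, $\nu\in\mathbb{Z}_{\geq 0}\Pi_{J_V}$, is one-dimensional and spanned by a monomial (and exhausts $U(\mathfrak{n}^-)_{-\nu}$, since $\Delta^+_{J_V}=\Pi_{J_V}$ here); hence the weight-graded ideal $I=\{r\in R \mid r\cdot m_{\lambda}\in \ker(M(\lambda)\twoheadrightarrow V)\}$ is monomial, $\wt_{J_V}V$ is read off from the surviving monomials, and \eqref{Minkowski formula for supp of holes} does the rest. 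Your diagnosis of the rank-$3$ obstruction likewise matches the point of Example \ref{Example for weights inside JV}.

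The gap is in case (2). You correctly isolate the one rank-$2$ configuration not covered by case (1) -- connected Dynkin diagram with $J_V=\mathcal{I}=\{1,2\}$ -- but then dismiss it because \eqref{Minkowski formula for supp of holes} reads $\wt V=\wt V$ there. A tautological formula does not establish the corollary's claim that the weight-set is \emph{explicit} in rank $2$; you have simply not proved assertion (2) in that case. The missing observation is that when the two nodes are adjacent, the only independent subsets of $\mathcal{I}$ are singletons, and a singleton $\{i\}\subseteq J_{\lambda}$ contributes to $J_V$ exactly when $\lambda-(\lambda(\alpha_i^{\vee})+1)\alpha_i\notin\wt V$, i.e.\ when $f_i^{\lambda(\alpha_i^{\vee})+1}v_{\lambda}=0$, i.e.\ when $i\in I_V$. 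Thus $J_V=I_V$ for connected rank-$2$ $\mathfrak{g}$; in particular $J_V=\mathcal{I}$ forces $V$ to be integrable with $\lambda\in P^+$, whence $\wt V=\wt L^{\max}(\lambda)$ by \eqref{wt formula for simples} -- completely explicit. More generally, in connected rank $2$ one gets $\wt_{J_V}V=\wt\big(U(\mathfrak{g}_{I_V})v_{\lambda}\big)=\wt L^{\max}_{I_V}(\lambda)$ and hence $\wt V=\wt M(\lambda,I_V)$ from \eqref{Minkowski formula for supp of holes} and \eqref{Eqn Mink diff PVM}. This, together with your case (1) for the disconnected (and proper-subset) situations, is what makes assertion (2) true with genuine content.
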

      Writing $\wt V$ in Example \ref{Example for weights inside JV} might help us to understand $\wt_{J_V}V$ for every $V$ in view of \eqref{Minkowski formula for supp of holes}, for the bigger goal of computing all $\wt V$. 
         Next, in our up-coming follow-up work on weights of modules $V$ over general Borcherds--Kac--Moody (BKM) algebras -- and particularly in extending the formulas for simple $V$ of Khare et al. to BKM setting -- the below were crucial: (1)~Identifying the candidates for parabolic Vermas over BKM $\mathfrak{g}$. 
      (2)~For these modules, writing Minkowski decompositions as above (w.r.t. the analogue of $J_V$ there-in) for computing their weights explicitly.  
      
      With Theorems~\ref{thmA}--\ref{thmC} solving Problem~\ref{Problem freeness and weight for all V}, we conclude our above discussion on weights-side, with a  question on the cones in Theorem~\ref{thmC}, and turn our focus to the weight spaces in $V$:
      \begin{problem}\label{Problem all free root subsets for wt V}
           For which root-subsets $S\subseteq \Delta^+$, \quad $\wt V \ - \ \mathbb{Z}_{\geq 0}S\ \subseteq \ \wt V$?
      \end{problem}
      \subsection*{Along the free-directions: weight vectors and multiplicities}\qquad\qquad 

            We now study a version of the above freeness questions at module level.
      Aligned on these lines, our final main result Proposition~\ref{Enumeration proposition- freeness at module level} proved in Section \ref{Section 7} (also Theorem \ref{Corollary local Weyl group invariance 2} in Section \ref{Section 6}) is an application of working with $J_V$ and of our proof for result \eqref{Eqn all free directions for V}, as mentioned in Remark~\ref{Remark Inspirations for Problem 1(b)}): Lower bounds on the multiplicities  of weights that are in the free $J_V^c$-directions for $V$ (for $\wt V$). 
      
     We begin with a technical lemma \big(see proof of Theorem D \cite{Khare_JA}\big) important in Khare's above works, for showing {\it integrable slice decompositions} for parabolic Vermas \eqref{Int. slice. decomp. PVM} and simples \eqref{wt formula for simples}:
     \begin{lemma}[\cite{Khare_JA}]\label{Lemma freeness at module level}
     Let $\mathfrak{p}_{J_{\lambda}}^-$ be the negative part of $\mathfrak{p}_{J_{\lambda}}$. Fix $\lambda\in \mathfrak{h}^*$, and suppose $|J_{\lambda}^c|=n$. Then-
      \begin{align*}\label{non-vanishing wt vectors in simple}
      \tag{C00}
      \begin{rcases}
      \begin{aligned}
      &\text{for}\textbf{ any enumeration }     J_{\lambda}^c = \{ i(1),\ldots, i(n) \},\\
      &\text{numbers }\ c(1),\ldots, c(n)\in \mathbb{Z}_{\geq 0}, \text{ and } F\in U(\mathfrak{p}_{J_{\lambda}}^-)\\
      &\text{yielding weight vector }F\cdot L(\lambda)_{\lambda}\neq 0,   
      \end{aligned}           
      \end{rcases}
       \qquad 
      \begin{aligned}
         F\ \cdot\   f_{i(1)}^{c(1)}\ \cdots \ f_{i(n)}^{c(n)}\ \cdot\  L(\lambda)_{\lambda}\ \neq 0.
      \end{aligned}
      \end{align*}
      \end{lemma}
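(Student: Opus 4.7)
The plan is a direct induction on $\sum_{k=1}^n c(k)$, peeling off the outermost factor $f_{i(n)}^{c(n)}$ at each step using the ``free'' $\mathfrak{sl}_2^{(i(n))}$-action arising from the non-integrability of $\lambda$ in direction $i(n)\in J_\lambda^c$. The base case $\sum_k c(k) = 0$ is simply the hypothesis $F\cdot L(\lambda)_\lambda\neq 0$.

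For the inductive step, the crucial observation is a set of simple commutation relations. Up to $\mathfrak{h}$-scalars which act diagonally on weight vectors, one may assume $F\in U(\mathfrak{n}^-_{J_\lambda})$; then for every $j\in J_\lambda$ we have $[e_{i(n)}, f_j] = \delta_{i(n),j}\,\alpha_{i(n)}^\vee = 0$, because $i(n)\in J_\lambda^c$ forces $i(n)\neq j$, and similarly $[e_{i(n)}, f_{i(k)}]=0$ for $k<n$ since the $i(k)$ are distinct simple indices. Consequently $e_{i(n)}^{c(n)}$ commutes all the way past $F\cdot f_{i(1)}^{c(1)}\cdots f_{i(n-1)}^{c(n-1)}$, and using $e_{i(n)}\cdot v_\lambda=0$ together with the standard $\mathfrak{sl}_2^{(i(n))}$-identity we compute
\[
e_{i(n)}^{c(n)}\cdot F\cdot f_{i(1)}^{c(1)}\cdots f_{i(n)}^{c(n)}\cdot v_\lambda \;=\; \kappa\cdot F\cdot f_{i(1)}^{c(1)}\cdots f_{i(n-1)}^{c(n-1)}\cdot v_\lambda,
\]
where $\kappa := c(n)!\prod_{r=0}^{c(n)-1}\bigl(\lambda(\alpha_{i(n)}^\vee) - r\bigr)$.

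The main point -- where the lemma's hypothesis bites -- is that $i(n)\in J_\lambda^c$ means $\lambda(\alpha_{i(n)}^\vee)\notin\mathbb{Z}_{\geq 0}$, so in particular no factor $\lambda(\alpha_{i(n)}^\vee)-r$ with $r\in\{0,1,\ldots,c(n)-1\}$ vanishes, and thus $\kappa\neq 0$. The inductive hypothesis, applied with exponents $(c(1),\ldots,c(n-1),0)$ and the same $F$, gives $F\cdot f_{i(1)}^{c(1)}\cdots f_{i(n-1)}^{c(n-1)}\cdot v_\lambda\neq 0$ in $L(\lambda)$; combined with $\kappa\neq 0$ this forces $e_{i(n)}^{c(n)}\cdot\bigl(F\cdot f_{i(1)}^{c(1)}\cdots f_{i(n)}^{c(n)}\cdot v_\lambda\bigr)\neq 0$ in $L(\lambda)$, whence the vector $F\cdot f_{i(1)}^{c(1)}\cdots f_{i(n)}^{c(n)}\cdot v_\lambda$ is nonzero as required. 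Conceptually, the non-integrability of $\lambda$ in each $J_\lambda^c$-direction makes every $\mathfrak{sl}_2^{(i)}$-string through $v_\lambda$ into a Verma module for $\mathfrak{sl}_2^{(i)}$ (free along $f_i$), and this ``freeness at $v_\lambda$'' propagates through the product $f_{i(1)}^{c(1)}\cdots f_{i(n)}^{c(n)}$ precisely because the simple indices $i(1),\ldots,i(n)$ are pairwise distinct and disjoint from $J_\lambda$, so the raising operators never interact with $F$ or with the other factors.
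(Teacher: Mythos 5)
Your argument is correct and is essentially the technique the paper itself uses (the lemma is quoted from \cite{Khare_JA}, but the paper reproves and generalizes it as Lemma \ref{L4.2}): apply raising operators $e_{i}^{c(i)}$ for $i\in J_{\lambda}^c$, commute them past $F\in U(\mathfrak{n}^-_{J_{\lambda}})$ and past the other $f_{i(k)}^{c(k)}$, and use that each $e_i^{c(i)}f_i^{c(i)}$ acts on $v_\lambda$ by the nonzero scalar $c(i)!\prod_{r=0}^{c(i)-1}(\lambda(\alpha_i^\vee)-r)$ precisely because $\lambda(\alpha_i^\vee)\notin\mathbb{Z}_{\geq 0}$. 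The only (cosmetic) difference is that you peel off one factor at a time by induction on $\sum_k c(k)$ — where you should arrange the induction so the step is nontrivial when $c(n)=0$ — whereas the paper applies all the raising operators in a single pass.
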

     \eqref{non-vanishing wt vectors in simple} implies the freeness property in Problem \ref{Problem freeness and weight for all V}(b) for $V=L(\lambda)$ and $J=J_{\lambda}^c$; 
      Lemma~\ref{L4.2} in Section \ref{Section proof of thmA} generalizes \eqref{non-vanishing wt vectors in simple} to all $V$.
      \cite{Venkatesh, Shushma} computed the multiplicities of free-roots in BKM Lie super algebras, via chromatic polynomials and graph-colorings, and moreover constructed bases for these root-spaces via super Lyndon heaps.
     Inspired by this, and following our result \eqref{Eqn all free directions for V} and to strengthen it at module level, 
     we ask for some surviving vectors along our free $J_V^c$-directional weight spaces in $V$, in Problem~\ref{Problem non-vanishing vectors in free-directions for any V}(a) below.
    The recipe of enumerations in it is motivated by \eqref{non-vanishing wt vectors in simple}, which solves this question in the special case $V=L(\lambda)$.
            Furthermore, it might be interesting to explore \eqref{non-vanishing wt vectors in simple} for $V=$ parabolic Vermas $M(\lambda,J)$, with arbitrary free-directions $J^c\supseteq J_{\lambda}^c$, and for general $V$; for which we pose Problem \ref{Problem non-vanishing vectors in free-directions for any V} inspired by the proof technique of Lemma \ref{Lemma freeness at module level} in \cite{Khare_JA}. 
       \begin{problem}\label{Problem non-vanishing vectors in free-directions for any V}
       (a) Given $V$ with $n=|J_V^c|>0$, \  for which orderings $J_V^c\ = \ \{i(1),\ldots, i(n)\}$, 
       \[ f_{i(1)}^{c(1)}\ \cdots\  f_{i(n)}^{c(n)}\ \cdot\  v_{\lambda}\ \neq 0\ \quad \text{for  all }\ \big(c(j)\big)_{j\in J_V^c}\neq 0\  ?\]
       (b) Suppose we have one such enumeration. 
       Can we reach back $v_{\lambda}$ to by $\mathfrak{sl}_2$-theory, i.e. -
       \[ e_{j_1}\ \cdots \ e_{j_m}\ \ \cdot \ \  f_{i(1)}^{c(1)}\cdots f_{i(n)}^{c(n)}\cdot v_{\lambda} \ \ \neq 0\ \in \  V_{\lambda}\  \quad \text{for some }\ j_1,\ldots, j_m\in \mathcal{I}\  ?\]
       Or how high can we go above the $\lambda-\sum_{i\in J_V^c}c(i)\alpha_i$-weight space using $e_j$s?
       \end{problem}
       Clearly all orderings -- which work when $J_V^c$ is independent, or when $J_V^c=J_{\lambda}^c$ -- do not work for part (a); see Example \ref{Ex non-Verma V with full wts} where-in $J_V^c=\{1,2\}$.
       Our Proposition \ref{Enumeration proposition- freeness at module level} below solves part (a), with key tool- exploring the question (b).
    \underline{Our applications and motivations behind part (b)}:
    \begin{itemize}
    \item[(1)] Corollaries Proposition \ref{Corollary local Weyl group invariance 1} and Theorem~\ref{Corollary local Weyl group invariance 2}(b) to Theorem~\ref{thmC} in Section~\ref{Section 6}, for  studying simple root strings through weights in $V$ (see Remark~\ref{Remark seeking non-vanishijng weight vectors in free-directions for Theorem integrable strings for weights (c')}), with two broader motivations : 
    i) \cite[Problem 4.8]{WFHWMRS} investigating the continuity of these strings through general $\mu\in \wt V$.
    \end{itemize}
    \begin{problem}
        In particular, does $\mu\pm \alpha_i\in \wt V$ for some $i\in \mathcal{I} $ imply $ \mu \in \wt V$?
    \end{problem}
    \begin{itemize}
    \item[ ]ii) Study of the top weights in these strings, see \cite[Proposition 4.11]{WFHWMRS}.  \item[(2)] Writing the $\mathfrak{g}$-submodules of $V$ in Theorem \ref{Corollary local Weyl group invariance 2} for slice decompositions Theorem \ref{theorem J-slice decomp.}.
    \item[(3)] Some bounds for weight-multiplicities along free $J_V^c$-dircetions in Proposition \eqref{Enumeration proposition- freeness at module level} \eqref{Eqn bound of free-directional weight- dimensions}. 
    \item[(4)] Strengthening the well-known (existential) result from Kac's book \cite{Kac}:
    \end{itemize}
       \begin{lemma}[{\cite[Lemma 9.6]{Kac}}]\label{Lemma local composition series for V} 
       For $\mathfrak{g}$-modules $V$ (or any module in category $\mathcal{O}$) and $\mu\in \wt V$, we have a filtration $V_{r+1}\ \subset\  V_{r}\ \subset\  \cdots \ \subset\  V_1=V$ such that- i) subfactors $\frac{V_k}{V_{k+1}}\ \simeq $ $L(\lambda_k)$ whenever $\mu\in \wt \Big(\frac{V_k}{V_{k+1}}\Big)$ for $k\leq r$, ii) with $(V_r)_{\mu}=0$ and iii) so $\dim V_{\mu}\ = \ \sum\limits_{k \text{ s.t. }\mu\in \wt L(\lambda_k)} \dim L(\lambda_k)_{\mu}$.
       \end{lemma}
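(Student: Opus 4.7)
The plan is to induct on the integer
\[
d(V,\mu)\ :=\ \sum_{\nu\in\wt V,\ \nu-\mu\in\mathbb{Z}_{\geq 0}\Pi}\dim V_\nu,
\]
which is finite for $V$ in category $\mathcal{O}$: the weight set of such $V$ lies in a finite union of downward cones $\lambda_i-\mathbb{Z}_{\geq 0}\Pi$, so its intersection with $\mu+\mathbb{Z}_{\geq 0}\Pi$ is finite, and each weight space is finite-dimensional. In the base case $d(V,\mu)=0$ one has $V_\mu=0$ (and $V_\nu=0$ for every $\nu\geq\mu$), and the trivial filtration $V=V_1\supsetneq V_2:=0$ satisfies conditions (i)--(iii) vacuously.

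For the inductive step assume $d(V,\mu)>0$. The aim is to produce a proper submodule $V_2\subsetneq V$ so that the top subfactor $V/V_2$ is either (a) a simple highest weight module $L(\nu)$ for some $\nu\geq\mu$ (contributing to (iii) when $\mu\in\wt L(\nu)$), or (b) a module with no weight equal to $\mu$, so that the simplicity requirement on this subfactor is vacuous. In either case one has $d(V_2,\mu)<d(V,\mu)$ strictly, and the inductive hypothesis applied to $V_2$ furnishes a chain $V_2\supsetneq V_3\supsetneq\cdots\supsetneq V_{r+1}$ with $(V_r)_\mu=0$; splicing then yields the required filtration of $V$.

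To construct $V_2$, I would choose $\nu\in\wt V$ maximal in the partial order $\geq$ subject to $\nu\geq\mu$ and $V_\nu\neq 0$; such $\nu$ exists by the finiteness noted above. Maximality forces $V_{\nu+\alpha_i}=0$ for every $i\in\mathcal{I}$, so any nonzero $v\in V_\nu$ is annihilated by all raising operators $e_i$ and is hence a maximal vector. Then $W:=U(\mathfrak{g})\cdot v\subseteq V$ is a highest weight submodule of weight $\nu$, with unique maximal submodule $W'$ and simple head $W/W'\simeq L(\nu)$. An application of Zorn's lemma to the family of submodules $U\subseteq V$ with $v\notin U$ produces a maximal such $V_2$; the image $\bar v$ in $V/V_2$ is a highest weight vector of weight $\nu$ which is contained in every nonzero submodule of $V/V_2$, and this identifies $L(\nu)$ as a simple subquotient at the top.

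The main obstacle I anticipate is verifying that the Zorn-maximal $V_2$ yields $V/V_2\simeq L(\nu)$ itself, rather than merely a quotient with simple socle $L(\nu)$; this is the delicate technical heart of the local composition series argument in category $\mathcal{O}$. It can be handled by iterating the construction and using the strict decrease of $d(V,\mu)$ at each step to guarantee termination, or equivalently by first invoking the Humphreys-style existence of filtrations by highest weight subquotients in $\mathcal{O}$ and then refining only those subquotients that carry $\mu$ as a weight. Once the filtration is in hand, condition (iii) is immediate from additivity of $\dim(\cdot)_\mu$ in the short exact sequences $0\to V_{k+1}\to V_k\to V_k/V_{k+1}\to 0$: summing across $k=1,\ldots,r$ and using $(V_r)_\mu=0$ yields $\dim V_\mu=\sum_{k=1}^r\dim(V_k/V_{k+1})_\mu$, and only the subfactors $\simeq L(\lambda_k)$ for which $\mu\in\wt L(\lambda_k)$ contribute.
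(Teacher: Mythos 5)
Your overall skeleton (induction on $d(V,\mu)=\sum_{\nu\succeq\mu}\dim V_\nu$, choosing a maximal weight $\nu\succeq\mu$, observing that any $0\neq v\in V_\nu$ is a maximal vector, and forming the highest weight submodule $W=U(\mathfrak{g})v$ with simple head $L(\nu)$) is exactly the argument of \cite[Lemma 9.6]{Kac}, which the paper cites rather than reproves. But your construction of the filtration step has a genuine gap, and you have correctly located it yourself: taking $V_2$ to be Zorn-maximal among submodules not containing $v$ realizes $L(\nu)=U(\mathfrak{g})\bar v$ as the \emph{socle} of $V/V_2$, not as $V/V_2$ itself. The top subfactor $V_1/V_2=V/V_2$ may strictly contain $L(\nu)$ and may still have $\mu$ as a weight of the complementary part, so condition (i) fails for it. Your two proposed remedies do not close this: ``iterating the construction'' never revisits the defective subfactor, which sits permanently at the top of the descending chain once $V_2$ is fixed; and invoking a Humphreys-style filtration by highest weight subquotients and then ``refining'' the ones containing $\mu$ is essentially an appeal to the statement being proved.

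The standard repair is to place the simple subquotient in the \emph{middle} rather than at the top. Let $W=U(\mathfrak{g})v$ and let $W'$ be its unique maximal proper submodule (no Zorn needed: $W'$ is the sum of all submodules of $W$ missing the one-dimensional top weight line, since every proper submodule of a highest weight module is graded and avoids $W_\nu$). Then $0\subset W'\subset W\subset V$ has middle subfactor $W/W'\simeq L(\nu)$ with $\nu\succeq\mu$, and both $d(W',\mu)\leq d(W,\mu)-1<d(V,\mu)$ and $d(V/W,\mu)=d(V,\mu)-d(W,\mu)<d(V,\mu)$. Applying the induction hypothesis separately to $W'$ and to $V/W$ (pulling the latter's filtration back to submodules of $V$ containing $W$) and splicing the three pieces gives the required chain; your additivity argument for (iii) then goes through unchanged. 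With this one modification your proof coincides with Kac's.
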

       \begin{problem}\label{Problem determining simple containing a given weight}
           In the above filtration for $V$, can one determine all the simples $L(\lambda_k)$ containing any fixed weight $\mu$?
           A weaker version of this- given $\mu$, can we write-down examples of such $L(\lambda_k)$s?
       \end{problem}
       Our result \eqref{Eqn bound of free-directional weight- dimensions} in Proposition \ref{Enumeration proposition- freeness at module level} answers this weaker problem for $\mu$ along the free directions.   
     For stating these, we develop algorithms for the orderings of $J=J_V^c$ sought-for in Problem \ref{Problem non-vanishing vectors in free-directions for any V}: \\ Let $P^+$ denote the $\mathbb{Z}_{\geq 0}$-cone of dominant integral weights in the integral weight lattice $P\subset \mathfrak{h}^*$.  
       \begin{algorithm}\label{Algorithm for enumerations}
          Let $\mathfrak{g}$ be a Kac--Moody algebra, $\lambda\in P^+$ and $M(\lambda)\twoheadrightarrow V$ with $J_V^c\neq \emptyset$, and a sequence  of numbers $c(i)\in \mathbb{Z}_{\geq 0}$ $\forall$ $i\in J_V^c$. \quad 
           We proceed in the below steps:\\ 
          \big(For arbitrary $\lambda\in \mathfrak{h}^*$ the procedure is more involved, and is showed in Algorithm \ref{Section 6 Algorithm for enumerations} in Section~\ref{Section 7}.\big)
           \begin{itemize}
               \item[Step 0.] 
           We set $I_1= J_V^c$.
           While $I_1$ is independent, we choose (distinct) $i_1,\ldots, i_{|J_V^c|}\in I_1$ arbitrarily.\\
               If $I_1$ is not independent, we proceed to Step 1. 
               \item[Step 1.] We pick any two adjacent nodes in (the Dynkin subdiagram of) $I_1$, say $a_1,a_2\in I_1$.\\ We choose $i_1\in \{a_1,\ a_2\}$ such that $c(i_1)=\min \{c(a_1), c(a_2) \}$.
               \item[Step 2.] We update $I_2= I_1\setminus \{i_1\}$. We repeat (Step 0 and) Step 1 for $I_1$ in place of $I_0$, for node `$i_2$'.\\
               \hspace*{1cm}\vdots
               \end{itemize}
               \hspace*{1.25cm} At the $k^{\text{th}}$-step, $I_k=I_{k-1}\setminus \{i_{k-1}\}= J_V^c\setminus \{i_1,\ldots, i_{k-1}\} $.\\
               The algorithm terminates after $k$-steps, once $I_k$ is independent, yielding $i_1,\ldots, i_{k-1}$, and we then arbitrarily choose (distinct) $i_{k+1},\ldots,  i_{|J_V^c|}\in I_k$. 
       \end{algorithm}
       \begin{prop}\label{Enumeration proposition- freeness at module level}
           let $\mathfrak{g}$ be a Kac--Moody algebra, $\lambda\in \mathfrak{h}^*$ and $M(\lambda)\twoheadrightarrow V$.
           Assume that $J_V^c\neq \emptyset$, and fix numbers $c(i)\in \mathbb{Z}_{\geq 0}$ $\forall$ $i\in J_V^c$. 
           Let ` $\bullet$ '  denote the {\it dot}-action of $W$ on $\mathfrak{h}^*$.
           \begin{enumerate}
           \item[(a)] Here and for result (b) below, we assume that $\lambda\in P^+$.
           Then for any enumeration $J_V^c=\{i_1,\ldots, i_n\}$ given by Algorithm \ref{Algorithm for enumerations}, 
           \[
           f_{i_1}^{c(i_1)}\cdots f_{i_n}^{c(i_n)}\ \cdot V_{\lambda}\ \ \neq\  0.
           \]
           \item[(b)] 
           Recall, enumerations in Algorithm \ref{Algorithm for enumerations} always end in independent sets (possibly $\emptyset$). So:
           \begin{itemize}
          \item[1] In this procedure(s), say $H_1', \ldots, H_r'$ are the independent sets that we obtain.
          \item[2] Correspond to each $H_t'$ above, we define $H_t: = \{i\in H_t'\ |\ c(i)> \lambda(\alpha_i^{\vee})\}$ (possibly $\emptyset$) $\forall\ t$.
          \item[3] Let $H_1,\ldots, H_s$ be the distinct independent sets among $H_1,\ldots , H_r$. 
          \end{itemize}
          $\text{Then for the given weight }\ \mu\ =\ \lambda-\sum\limits_{i\in J_V^c}c(i)\alpha_i\ = \ \lambda-\sum_{t=1}^n c(i_t)\alpha_{i_t}$ :
          \begin{equation}\label{Eqn bound of free-directional weight- dimensions}
         \hspace*{2cm}  \dim\big( V_{\mu} \big) \ \ \geq \ \ \sum_{t=1}^s\underbrace{\dim\left[ L\big( w_{H_t}\ \bullet \lambda \big)\right]_{{\large\mu}}}_{>0}. \qquad \quad \text{Where }\ w_{H_t}:=\prod\limits_{i\in H_t}s_i.           \end{equation}
           Above (1) we can replace each $\mathfrak{g}$-simple $L\big(w_{H_t}\bullet \lambda\big)$ by the simple $L_{J_V^c\setminus H_t}\big(w_{H_t}\bullet \lambda\big)$ over the subalgebra $\mathfrak{g}_{J_V^c\setminus H_t}$, via the orderings in Algorithm \ref{Algorithm for enumerations}; and importantly note (2) the $\mu$-multiplicities are positive in all of these simples \big(addressing Problem \ref{Problem determining simple containing a given weight}\big).
           \item[(c$'$)]
           Now for general $\lambda\in \mathfrak{h}^*$, as a first step, applying Algorithm \ref{Algorithm for enumerations} for $J_{\lambda}\cap J_V^c$ (in place of $J_V^c$ there-in), yields us enumeration(s) $J_{\lambda}\cap J_V^c\ =\ \{i_1,\ldots, i_m\}$, for $m=|J_{\lambda}\cap J_V^c|\geq 0$ so that
               \[
              \underbrace{f_{i_1}^{c(i_i)}\cdots f_{i_m}^{c(i_m)}}_{J_{\lambda}\cap J_V^c}\ \cdot\ \bigg(\prod_{i\in J_\lambda^c\cap J_V^c }f_i^{c(i)}\bigg)\ \cdot V_{\lambda}\ \ \neq \ 0.
               \]
               Above, we multiply $f_i^{c(i)}$s from $J_{\lambda}^c\cap J_V^c$ in any fixed order, in view of \eqref{non-vanishing wt vectors in simple}.
           \item[(c)] Here are our complete results in the general case of $\lambda\in \mathfrak{h}^*$.
          We use the more technical Algorithm \ref{Section 6 Algorithm for enumerations} in Section \ref{Section 7}, for enumerations $J_V^c = \{j_1,\ldots, j_n\}$.
          Let all the possible terminating independent sets $H_t$s be defined as there-in.
          Then the results analogous to (a) and (b) above hold true for any $\mathfrak{g}$-module $M(\lambda)\twoheadrightarrow V$ and any highest weight $\lambda\in \mathfrak{h}^*$. 
           \end{enumerate}
       \end{prop}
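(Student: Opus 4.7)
The plan is to prove parts (a) and (b) first for $\lambda \in P^+$, and then lift to general $\lambda \in \mathfrak{h}^*$ in (c$'$) and (c) by splitting $J_V^c$ into its $J_\lambda$- and $J_\lambda^c$-parts and invoking Khare's original Lemma \eqref{non-vanishing wt vectors in simple} (and Lemma \ref{L4.2}) on the $J_\lambda^c$-side, where $f_i^k v_\lambda \neq 0$ unconditionally in any highest-weight quotient by Verma theory. In the base case where Algorithm \ref{Algorithm for enumerations} terminates at Step~0, $J_V^c$ is independent, so $\{f_i\}_{i \in J_V^c}$ commute pairwise, and nonvanishing of the product follows from the freeness \eqref{Eqn all free directions for V} combined with Lemma \ref{L4.2}, matching the $\mathbb{Z}_{\geq 0}\Pi_{J_V^c}$-lattice structure of $U(\mathfrak{n}^-_{J_V^c}) v_\lambda$.

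For the inductive step of (a), Algorithm \ref{Algorithm for enumerations} selects $i_1 \in \{a_1, a_2\}$ with $c(i_1) = \min\{c(a_1), c(a_2)\}$, where $a_1, a_2$ are $J_V^c$-adjacent. Writing $\tau := f_{i_2}^{c(i_2)} \cdots f_{i_n}^{c(i_n)} v_\lambda$, which is nonzero by induction applied to $I_2 = J_V^c \setminus \{i_1\}$, I would verify $f_{i_1}^{c(i_1)} \tau \neq 0$ by applying $e_{i_1}^{c(i_1)}$ and commuting leftward past every $f_{i_k}^{c(i_k)}$ for $k \geq 2$ using $[e_{i_1}, f_{i_k}] = 0$ (Chevalley, since $i_1 \neq i_k$). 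Because $e_{i_1} v_\lambda = 0$ also gives $e_{i_1} \tau = 0$, the vector $\tau$ behaves as a highest-weight vector for the $\mathfrak{sl}_2$-triple at $i_1$, of $h_{i_1}$-weight $m := \lambda(\alpha_{i_1}^\vee) - \sum_{k \geq 2} c(i_k)\, a_{i_1, i_k}$, and the standard $\mathfrak{sl}_2$-identity yields $e_{i_1}^{c(i_1)} f_{i_1}^{c(i_1)} \tau = c(i_1)! \prod_{j=0}^{c(i_1)-1}(m-j)\,\tau$. The minimality $c(i_1) \leq c(a_2)$ with $a_{i_1, a_2} \leq -1$ (adjacency) and $\lambda(\alpha_{i_1}^\vee) \geq 0$ (since $\lambda \in P^+$) forces $m \geq c(a_2) \geq c(i_1) > c(i_1) - 1$, so every factor is strictly positive and the scalar is nonzero.

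For (b), to each independent $H_t \subseteq J_V^c \cap J_\lambda$ produced by the algorithm with $c(i) > \lambda(\alpha_i^\vee)$ for all $i \in H_t$, I would attach the vector $u_{H_t} := \prod_{i \in H_t} f_i^{\lambda(\alpha_i^\vee)+1} v_\lambda$. This vector is annihilated by every $e_j$: for $j \notin H_t$ by Chevalley commutation with all $f_i$ ($i \in H_t$), and for $j \in H_t$ by the standard Verma identity $e_j f_j^{\lambda(\alpha_j^\vee)+1} v_\lambda = 0$ together with Chevalley commutation past the other $f_i$ (valid since $H_t$ is independent). Independence of $H_t \subseteq J_V^c$ together with Definition \ref{Defn support of all holes of V} forces $u_{H_t} \neq 0$ in $V$, so $U(\mathfrak{g}) u_{H_t}$ is a highest-weight submodule of $V$ surjecting onto $L(w_{H_t} \bullet \lambda)$. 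Since $w_{H_t} \bullet \lambda = \lambda - \sum_{i \in H_t}[\lambda(\alpha_i^\vee)+1]\alpha_i$ has support recovering $H_t$, the weights are pairwise distinct across $H_1, \ldots, H_s$, and Kac's Lemma \ref{Lemma local composition series for V} therefore picks up all these simples as distinct composition factors of $V$ containing $\mu$, yielding \eqref{Eqn bound of free-directional weight- dimensions}; the Levi-replacement claim follows by restricting the whole construction to $\mathfrak{g}_{J_V^c \setminus H_t}$.

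The main obstacle I anticipate is keeping the algorithm's combinatorics aligned with the construction in (b) so that a single ordering simultaneously detects each $H_t$, and handling the case $c(i_1) > \lambda(\alpha_{i_1}^\vee)$ in the inductive step of (a) uniformly (where the $\mathfrak{sl}_2$-scalar could a priori vanish, but the minimality choice together with the strict negativity $a_{i_1,a_2} \leq -1$ reliably shifts $m$ past the dangerous integers). For (c$'$)(c), the extra layer is combining the above with Algorithm \ref{Section 6 Algorithm for enumerations}, which arranges the $J_\lambda \cap J_V^c$ and $J_\lambda^c \cap J_V^c$ strata in a compatible order so that \eqref{non-vanishing wt vectors in simple}/Lemma \ref{L4.2} handles the non-integrable directions and the dot-action bound from (b) extends to its natural general-$\lambda$ form.
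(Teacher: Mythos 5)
Your part (a) is essentially the paper's own argument: the same induction (the paper runs it on the total degree $\sum_i c(i)$ rather than on the node set, which is the cleaner way to justify invoking the hypothesis for $f_{i_2}^{c(i_2)}\cdots f_{i_n}^{c(i_n)}v_\lambda$), the same observation that $\tau$ is $e_{i_1}$-maximal, and the same $\mathfrak{sl}_2$-scalar $\prod_{k=1}^{c(i_1)}k\,(m-(k-1))$ kept nonzero by the algorithm's minimality choice $c(i_1)\le c(a_2)$ together with $\langle\alpha_{a_2},\alpha_{i_1}^{\vee}\rangle\le -1$ and $\lambda\in P^+$. That part is sound.

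The genuine gap is in part (b), at the underbraced claim $\dim L(w_{H_t}\bullet\lambda)_{\mu}>0$. Your construction of $u_{H_t}=\prod_{i\in H_t}f_i^{\lambda(\alpha_i^{\vee})+1}v_\lambda$ correctly produces a nonzero maximal vector (by $H_t\subseteq J_V^c$ and Definition \ref{Defn support of all holes of V}), so $L(w_{H_t}\bullet\lambda)$ does occur as a local composition factor of $V$; but you never show that $\mu$ is actually a \emph{weight} of $L(w_{H_t}\bullet\lambda)$, and without that every summand in \eqref{Eqn bound of free-directional weight- dimensions} could a priori vanish, making the bound empty and leaving Problem \ref{Problem determining simple containing a given weight} unaddressed. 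This is not automatic from $\mu\preceq w_{H_t}\bullet\lambda$: the coefficients $c(i)$ at the nodes $i_1,\dots,i_k$ discarded during the algorithm are unconstrained relative to $\lambda(\alpha_i^{\vee})$, so one cannot simply read membership off formula \eqref{wt formula for simples} for $L(w_{H_t}\bullet\lambda)$. The paper closes this by pushing the raising computation of part (a) further: after $e_{i_k}^{c(i_k)}\cdots e_{i_1}^{c(i_1)}$ one applies $e_i^{d(i)}$ with $d(i)=c(i)-\lambda(\alpha_i^{\vee})-1$ for $i\in H_t$ and $d(i)=c(i)$ for the remaining terminal nodes, landing on a \emph{nonzero} multiple of $u_{H_t}$ \big(result \eqref{Eqn answer to going up by e-is problem}\big); combined with the criterion that a vector survives in a simple quotient iff it can be raised back to the highest-weight line \big(\eqref{Eqn raising property of simples for proposition (b)}\big), this is exactly what certifies $\mu\in\wt L(w_{H_t}\bullet\lambda)$. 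You have all the ingredients in your part (a) but never make this connection. A secondary, smaller issue: in part (c) the terminal set $H$ must be defined using the shifted values $\lambda(\alpha_i^{\vee})-\xi(\alpha_i^{\vee})$ (where $\xi$ collects the $J_\lambda^c\cap J_V^c$ contributions), and moving the $f_h^{c(h)}$ past the $f_{j}$'s from $J_\lambda^c\cap J_V^c$ requires the Serre-relation Lemma \ref{Lemma Serre}; your sketch of the stratification does not engage with either point.
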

This proposition is proved in Section \ref{Section 7}. Above, if for no $i\in H_1\cup \cdots\cup H_r$, $c(i)>\lambda\big(\alpha_i^{\vee}\big)$, then we set $s=1$ and $H_1'=\emptyset$, and then \eqref{Eqn bound of free-directional weight- dimensions} shows that $\mu\in \wt L(\lambda)$, recovering the traditional bound: \quad  \[
     \dim\big(V_{\mu}\big)\ \ \geq \ \ \dim\big(L(\lambda)_{\mu}\big).
     \]
       Next, these bounds might be attainable in smaller cases of $\mu$ with weight-multiplicities 1 or 2.
       It might be interesting to explore the settings where-in these bounds can be attained.
       \begin{remark}
       (a) Consider a composition series for $M(\lambda)\twoheadrightarrow V$ over finite type $\mathfrak{g}$, or its local version in the Kac--Moody setting by Lemma \ref{Lemma local composition series for V}.
    By Theorem \ref{thmC} (and Theorem \ref{Corollary local Weyl group invariance 2}), treating the \underline{finitely many} simple sub-factors $L(\lambda')$ in the series, with $\lambda'= w\bullet \lambda$ for $w\in W$ a product of some commuting reflections $s_i$, might be helpful to study $\wt V$ and $J_V^c$-directional weight spaces in $V$.\\ 
       (b) How can one extend and show result \eqref{non-vanishing wt vectors in simple} for general $V$ for any $J=J_V^c$ and $F\neq 1 \in U\big(\mathfrak{p}_{J_V^c}^-\big)$?\\
       (c) The above problem might be hard, due to non-empty $H_t$s arising.
       Nevertheless, such $H_t$s yield us applications: Proposition \ref{Enumeration proposition- freeness at module level}(b) multiplicity-bound \eqref{Eqn bound of free-directional weight- dimensions}.\\
       (d) If $H_1=\cdots = H_s=\emptyset$, Proposition \ref{Enumeration proposition- freeness at module level} yields the trivial solution $L(\lambda_k)=L(\lambda)$ to Problem~\ref{Problem determining simple containing a given weight}. 
       In this case, it might be interesting to explore if originally there is a lower $L(\lambda_k)$ containing $\mu$. 
      \end{remark}
      With the above descriptions, motivations and applications of our results, we conclude Section~\ref{Section 1}.
       
\section{Notations and Preliminaries}
To prove the results of this paper, we need the following notations, facts and preliminary results: \\
The results noted in Subsection \ref{Subsec. useful results} at the end of this section are important in our proofs.  
\subsection{Notations for vector spaces}\label{Subsection 2.1}
	\begin{itemize}
	\item $\mathbb{N}$, $\mathbb{Z}$, $\mathbb{R}$ and $\mathbb{C}$ : The sets of natural numbers, integers, real numbers and complex numbers.
	\item $\mathbb{R}_{\geq0}, \mathbb{R}_{>0}$, $\mathbb{Z}_{\geq 0}$ and $\mathbb{Z}_{>0}$ : The sets of non-negative and positive reals, similarly for integers.
	\item For $S\subseteq\mathbb{R}$ and $B$ subset of an $\mathbb{R}$-vector space, the $S$-linear combinations of elements in $B$ : 
	\[
	SB:=\left\{\sum_{j=1}^{n}r_jb_j\text{ }\Bigg|\text{ }n\in \mathbb{N},\text{ } b_j\in B,\ r_j\in S\text{ }\ \forall  \text{ }j\in [n]\right\}.
	\]
	\item {\bf Minkowski sum}, and resp. {\bf difference}, of any two sets $C$ and $D$ in an abelian group : $C\pm D:=\{c\pm d$ $|$ $c \in C,\text{ }d\in D\}$.\hspace*{1cm} $x\pm D$ : Shorthand notation for $\{x\}\pm D$.
\end{itemize}
   	\subsection{Notations for Kac--Moody algebras $\mathfrak{g}$}
   	\begin{itemize}
   	\item $\mathfrak{g}=\mathfrak{g}(A)$ : A fixed Kac--Moody $\mathbb{C}$-Lie algebra, corresponding to a generalized Cartan matrix $A$ assumed always to be finite-sized for simplicity.
    Our results hold for infinite $A$ as well.
   	\item $\mathfrak{h}$, $(\mathfrak{h},\Pi ,\Pi^{\vee})$, $\mathfrak{n}^-\oplus\mathfrak{h}\oplus\mathfrak{n}^+$, $\Delta$, $U(\mathfrak{g})$ : A fixed Cartan subalgebra, a realisation, the
	triangular decomposition and the root system w.r.t $\mathfrak{h}$, and the universal enveloping algebra for $\mathfrak{g}$.\\
	Whenever we make 
	additional assumptions on $\mathfrak{g}$, such as $\mathfrak{g}$ is indecomposable or semisimple or of finite/affine/indefinite type, we will clearly mention them.
	\item Indexing set $\mathcal{I}$ : The vertices/nodes in the Dynkin diagram for $A$ or 
	$\mathfrak{g}$; for simplicity, we assume $\mathcal{I}$ is finite. 
	We also fix an ordering in $\mathcal{I}$, and use $\mathcal{I}$ to index the rows and columns of $A$, and hence also to index the simple roots, simple co-roots, as well as the simple reflections. \\
   Ex: when $\mathfrak{g}=\mathfrak{sl}_n(\mathbb{C})$, $\mathcal{I}=\{1,\ldots, n-1\}$, and when $\mathfrak{g}=\text{ (affine) } \widehat{\mathfrak{sl}_n(\mathbb{C})}$ $\mathcal{I}=\{0,1,\ldots, n-1\}$.
	\item $\Pi :=\{\alpha_i $ $|$ $i\in \mathcal{I} \}$ and $\Pi^{\vee}:=\{\alpha_i^{\vee}$ $|$ 
	$i\in\mathcal{I}\}$ : The sets of simple roots and of simple co-roots.
	\item $\Delta^+$ and $\Delta^-$ : The subsets of positive and negative roots of $\Delta$ w.r.t $\Pi$.
	\item $e_i 
	,f_i,\alpha_i^{\vee}$ $\forall$ $i\in\mathcal{I}$ : The Chevalley generators for $\mathfrak{g}$; they generate the derived subalgebra~$[\mathfrak{g},\mathfrak{g}]$. 
	\item $W=\langle\{s_i $ $|$ $i\in \mathcal{I}\}\rangle$ : The Weyl group of $\mathfrak{g}$ generated by the simple reflections $s_i$ (about the plane perpendicular to $\alpha_i$) $\forall$ $i\in \mathcal{I}$. 
	\item  $\Delta^{re}=W\Pi$ and $\Delta^{im}$ : The subsets of real  and imaginary roots of $\Delta$. 
 \item $P$, $P^+$ : the integral weight lattice in $\mathfrak{h}^*$, the cone of dominant integral weights in it.
	\item $(\cdot | \cdot)$ : A fixed non-degenerate symmetric invariant bilinear form for symmetrizable $\mathfrak{g}$.
 \item 	The (usual) partial order `$\ \prec\ $' on $\mathfrak{h}^*$ : $x\prec y \in \mathfrak{h}^* \iff y -x \in \mathbb{Z}_{\geq 0}\Pi$. 
         \item The height (function on) for a vector $x=\sum_{i\in \mathcal{I}}c_i \alpha_i$ for some $c_i \in \mathbb{C}$ :
	$\height(x):=\sum_{i\in \mathcal{I}}c_i$.
 \item The {\it dot}-action of $W$ on $\mathfrak{h}^*$ : Fix any $\rho\in \mathfrak{h}^*$ with $\rho(\alpha_i^{\vee})=1$ $\forall$ $i\in \mathcal{I}$. Then for any $\lambda\in \mathfrak{h}^*$ and $w\in W$, $w\bullet (\lambda)\ :=\ w(\lambda+\rho)-\rho$.
	\item Intervals of weights :	For $ \mu\in\mathfrak{h}^*,\ \alpha\in \Delta \text{ and } k\in \mathbb{Z}_{\geq0}$, we define
	\begin{equation}\label{E2.5}
  \hspace*{0.5cm} \big[\mu-k\alpha,\text{ }\mu\big]:=\{\mu-j\alpha\text{ }\big|\text{ }0\leq j\leq k\}.\qquad (\text{Not to be confused with the Lie bracket in }\mathfrak{g}.)
  \end{equation}
	   	\end{itemize}
     \begin{remark}\label{R2.9} (1)  Recall, if $\bar{\mathfrak{g}}(A)$ is the Lie algebra generated by $e_i, f_i, \mathfrak{h}$ modulo only the {\it Serre relations}, then $\mathfrak{g}=\mathfrak{g}(A)$ is the further quotient of $\overline{\mathfrak{g}}(A)$ by the largest ideal not intersecting $\mathfrak{h}$.\\
     (2) Our results, while stated and proved over $\mathfrak{g}=\mathfrak{g}(A)$, hold equally well uniformly over any ``intermediate'' Lie algebra $\overline{\mathfrak{g}}(A)\twoheadrightarrow\tilde{\mathfrak{g}}\twoheadrightarrow\mathfrak{g}=\mathfrak{g}(A)$.
	The results of \cite{Dhillon_arXiv} holding over all such $\tilde{\mathfrak{g}}$ clarifies this: 
	namely, $\Delta$, and $\wt M(\lambda,J)$ and $\wt L(\lambda)$ for fixed $\lambda,J$, do not change with varying $\tilde{\mathfrak{g}}$. 
	  \end{remark}
\subsection{Notations for Kac--Moody and parabolic $\mathfrak{g}$-subalgebras}
 Our study of highest weight $\mathfrak{g}$-modules $V$ involves working with the actions of Kac--Moody subalgebras of $\mathfrak{g}$ on $V$, and below are some notations for them.
 In what follows, we fix $\emptyset \neq I\subseteq \mathcal{I}$.
	\begin{itemize}
	    \item $A_{I\times I}$ and $\mathfrak{g}_I:=\mathfrak{g}(A_{I\times I})$ : The submatrix of $A$ determined by $I$-rows and $I$-columns, and we identify the corresponding Kac-Moody Lie algebra $\mathfrak{g}\left(A_{I\times I}\right)$ \big(by Kac's book \cite{Kac}, Exercise~1.2\big) with a $\mathfrak{g}$-subalgebra denoted $\mathfrak{g}_I$, having the below attributes (similar to $\mathfrak{g}$).
      \item $\mathfrak{l}_I:= \mathfrak{g}_{I}+\mathfrak{h}$ and $\mathfrak{p}_I:=\mathfrak{g}_I +\mathfrak{h}+\mathfrak{n}^+$ : Standard Levi and parabolic $\mathfrak{g}$-subalgebras of $\mathfrak{g}$ for $I$. 
	   	    \item $\mathfrak{h}_I \subseteq \mathfrak{h}$ and $(\mathfrak{h}_I, \Pi_I,\Pi_I^{\vee})$ : In $\mathfrak{g}$, a suitable Cartan subalgebra (in $\mathfrak{h}$) and realisation for $\mathfrak{g}_I$.  
	   	     \item $\Pi_{I}:=\{\alpha_i$ $|$ $i\in I\}\subseteq \Pi=\Pi_{\mathcal{I}}$ and $\Pi_I^{\vee}:=\{\alpha_i^{\vee}$ $|$ $i\in I\}\subseteq \Pi^{\vee}$ : The sets of simple roots and of simple co-roots of $\mathfrak{g}_I$. 
		   	   \item $\Delta_{I}:=\Delta\cap\mathbb{Z}\Pi_I$ : The root system of $\mathfrak{g}_I$ = the subroot system of $\Delta$ generated by $\Pi_I$. 
		   	   \item $W_I=\langle\{ s_i\ | \ i\in I\}\rangle$, and $e_i ,f_i ,\alpha^{\vee}_i$, $\forall$ $i\in I$ : The parabolic subgroup of $W$ generated by $s_i$ $\forall$ $i\in I$, which is the Weyl group for $\mathfrak{g}_I$, and the Chevalley generators of $\mathfrak{g}_I$.
	  \item For $I=\emptyset$ : $\Pi_I=\Pi_I^{\vee}=\Delta_{I}=\emptyset$, $\mathfrak{g}_I=\mathfrak{h}_I=\{0\}$, and $W_I=\{1\}$ the trivial subgroup of $W$. 
	\end{itemize}
	\subsection{Highest weight $\mathfrak{g}$-modules} We fix (any) Kac--Moody $\mathfrak{g}$ and weight $\lambda\in \mathfrak{h}^*$.
	\begin{itemize} 
	\item Weights of an $\mathfrak{h}$-module $M$ : The $\lambda$-weight space, and the weight-set of $M$ are-
		\[ 
	M_{\lambda}:=\{m\in M\text{ }|\text{ }h\cdot m= \lambda(h)m\text{ }\forall\text{ }h\in \mathfrak{h}\},\quad\text{and}\quad \wt M:=\big\{\mu\in\mathfrak{h}^*\text{ }|\text{ }M_{\mu}\neq\{0\}\big\}.
	\]
	\item Weight modules : $\mathfrak{h}$-modules $M$  such that $M=\bigoplus\limits_{\mu\in\wt M}M_{\mu}$.
	And a $\mathfrak{g}$-module $M$ is called a weight module if it is a weight module over $\mathfrak{h}$.
	\item The character of a weight $\mathfrak{h}$-module $M$ : When each weight space of $M$ is finite-dimensional, we define $\text{char}M:=\sum\limits_{\mu\in\wt M}\dim(M_{\mu})e^{\mu}$ to be the formal character of $M$. 
	\item $M(\lambda)$ and $L(\lambda)$ : Verma module and (its unique simple quotient) the simple highest weight module over $\mathfrak{g}$, with highest weight $\lambda$. 
    $m_{\lambda}\ \neq 0 \in M(\lambda)_{\lambda}$ = highest weight vector in $M(\lambda)$. 
		\item $M(\lambda)\twoheadrightarrow V$ \big($M(\lambda)$ surjecting onto $V$\big) : Shorthand notation for a {\bf non-trivial highest weight $\mathfrak{g}$-module $V$} with the highest weight $\lambda$; 
  Thus $M(\lambda)\twoheadrightarrow V \twoheadrightarrow L(\lambda)$. 
  $v_{\lambda}\neq 0\in  V_{\lambda}$ is a highest weight vector in $V$. So, we have the trivial weight-bounds for $V$:
  \[
  \wt L(\lambda)\ \ \subseteq \ \ \wt V \ \ \subseteq \ \ \wt M(\lambda)\ = \ \lambda -\mathbb{Z}_{\geq 0}\Pi. 
  \]
  \end{itemize}
  \begin{remark}
  We aim at exploring ways to go closer to general $\wt V$, focusing on the below relations and notions for modules $V$: i) freeness property \eqref{Eqn freeness in PVMs}; ii) Definitions \ref{Defn of N(lambda,J)}, \ref{Defn support of all holes of V} in Section~\ref{Section 1}; iii) Definition \ref{Defn of support of all holes in Section 5} in Section \ref{Section 5} (which were not studied in the literature earlier to our knowledge).
  \end{remark}
  \begin{itemize}
    \item  Let $V$ be a highest weight $\mathfrak{p}_I$ or $\mathfrak{l}_I$-module with highest weight $\lambda\in\mathfrak{h}^*$.
    $V$ is a highest weight $\mathfrak{g}_I$-module, and its highest weight the restriction of $\lambda$ to $\mathfrak{h}_I$, is denoted by $\lambda$ for simplicity. 
         \end{itemize}
         \begin{definition}\label{Defn wts wrt I}
         Fix a $\mathfrak{g}$-module $M(\lambda)\twoheadrightarrow V$, and $I\subseteq \mathcal{I}$.
         Let $\mathfrak{s}=\mathfrak{p}_I$ or $\mathfrak{l}_I$ \big(or $\mathfrak{g}_I$\big).
        Then:
         \begin{equation}\label{Eqn wts wrt I}
             \wt \big(U(\mathfrak{s})\cdot v_{\lambda}\big)\ = \   \wt_I V \ \ :=\ \  \wt V\ \cap\ [\lambda\ -\ \mathbb{Z}_{\geq 0}\Pi_I]\quad \subset \mathfrak{h}^* \ \ \big(\text{resp. } \mathfrak{h}_I^*\big). 
         \end{equation}
         \end{definition}
  \subsection*{Integrablility}
		We begin with the set of all {\it integrable nodes/directions} for $\lambda$ (or for $L(\lambda)$) :
		\begin{equation}\label{E1.2}
	J_{\lambda}:=\left\{i \in \mathcal{I} \text{ }|\text{ } \langle \lambda ,\alpha^{\vee}_i\rangle \in \mathbb{Z}_{\geq0}\right\}.
	\end{equation}
  $\langle\lambda,\alpha_i^{\vee}\rangle\ = \ \lambda\big(\alpha_i^{\vee}\big)$ is the pairing/evaluation of $\lambda\in \mathfrak{h}^*$ at $\alpha_i^{\vee}\in \mathfrak{h}$.\qquad 
 $\lambda\in P^+$ $\iff$ $J_{\lambda}=\mathcal{I}$.
 \begin{definition}\label{Defn integrability of V}
The {\it integrability} or the set of integrable directions for $M(\lambda)\twoheadrightarrow V$ : \begin{equation}	
	I_V:=\left\{i\in \mathcal{I}\text{ }\Big|\text{ }\langle\lambda,\alpha_i^{\vee}\rangle\in\mathbb{Z}_{\geq0}\text{ and }f_i^{\langle\lambda,\alpha_i^{\vee}\rangle+1}v_{\lambda}=\{0\}\right\}\qquad \subseteq \ J_{\lambda}.
		\end{equation}
  The integrabilities of simples $L(\lambda)$ and of parabolic Vermas $M(\lambda,J)$ (Definition \ref{D2.1})\ : $J_{\lambda}$ and $J$.  
  \end{definition}
  \begin{lemma}[Folklore]\label{L2.2}
\begin{itemize}
\item[(1)] Let $j\in \mathcal{I}$. Then $f_j$ acts locally nilpotently on $V$ if and only if $f_j$ acts nilpotently on the highest weight space $V_{\lambda}$.
\item[(2)] $V$ is $\mathfrak{g}_J$-integrable, iff $f_j$ ($e_j$ anyway does) acts locally nilpotently on $V$ $\forall$ $j\in J$, iff the character of $V$ is $W_J$-invariant.
\item[(3)] Thus, $V$ is $\mathfrak{g}_{I_V}$-integrable, and moreover, $\wt V$ is $W_{I_V}$-invariant.
\end{itemize}
\end{lemma}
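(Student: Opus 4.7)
The plan is to establish part (1) first as the technical core; parts (2) and (3) will then fall out via $\mathfrak{sl}_2$-theory, character symmetry, and the very definition of $I_V$. A preliminary observation I would use throughout: since $V$ is a quotient of $M(\lambda)$ and $M(\lambda)_\lambda = \mathbb{C}m_\lambda$ is one-dimensional, we have $V_\lambda = \mathbb{C}v_\lambda$, so ``$f_j$ acts nilpotently on $V_\lambda$'' is just the assertion $f_j^N v_\lambda = 0$ for some $N\geq 0$; this is the only input I will use.

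For (1), the forward direction is immediate. For the converse, following the classical template of \cite[Lemma 3.4]{Kac}, I would consider
\[
V' \ := \ \{v \in V \ | \ f_j^M v = 0 \text{ for some } M \geq 0\},
\]
and argue that $V'$ is a $\mathfrak{g}$-submodule containing $v_\lambda$, hence equals $V$. As preparation, since $V_\lambda$ is a top weight space, $e_j v_\lambda = 0$, so $\mathbb{C}v_\lambda$ is a finite-dimensional $\mathfrak{sl}_2(j)$-module, which forces $\lambda(\alpha_j^\vee) \in \mathbb{Z}_{\geq 0}$ and $f_j^{\lambda(\alpha_j^\vee)+1} v_\lambda = 0$. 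To show $V'$ is stable under each Chevalley generator $e_i, f_i$ for $i\neq j$, I would invoke the Serre relations $(\ad f_j)^{1-a_{ij}}(e_i) = 0 = (\ad f_j)^{1-a_{ij}}(f_i)$, which let one commute $f_j^M$ past $e_i$ or $f_i$ at the cost of only finitely many lower-order correction terms in $f_j$; for $M$ large enough relative to the local $f_j$-nilpotence degree of $v$, each summand vanishes on $v$. The case $i=j$ is handled directly by $\mathfrak{sl}_2(j)$-theory on the finite-dimensional $\mathfrak{sl}_2(j)$-submodule generated by $v$. I expect the main obstacle to be bookkeeping this commutation cleanly in the general Kac--Moody setting, where both the Serre relations and the ideal-quotient description of $\mathfrak{g}(A)$ recalled in Remark~\ref{R2.9} enter.

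For (2), the implication from $\mathfrak{g}_J$-integrability to local $f_j$-nilpotence is definitional. For the reverse, I use two observations: (i)~$e_i$ acts locally nilpotently on any highest weight module since $e_i^N v$ eventually leaves $\lambda - \mathbb{Z}_{\geq 0}\Pi$; and (ii)~combined with part (1) for each $j \in J$, this yields $\mathfrak{g}_J$-integrability. For the $W_J$-invariance of $\operatorname{char} V$, I would decompose each weight subspace through the $\mathfrak{sl}_2(j)$-action for $j\in J$ into finite-dimensional irreducibles, whose weights are $s_j$-symmetric; this gives $s_j$-invariance of the character and hence $W_J$-invariance. The reverse implication, from $W_J$-invariant character to local nilpotence, follows because $W_J$-symmetry of weight multiplicities together with local $e_j$-nilpotence forces each $\mathfrak{sl}_2(j)$-cyclic subspace to be finite-dimensional. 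Finally, part (3) is immediate: the definition of $I_V$ gives $f_i^{\lambda(\alpha_i^\vee)+1} v_\lambda = 0$ for each $i \in I_V$, so part (1) yields local $f_i$-nilpotence, and applying part (2) with $J = I_V$ produces both the $\mathfrak{g}_{I_V}$-integrability of $V$ and the $W_{I_V}$-invariance of $\wt V$.
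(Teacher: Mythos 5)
The paper states this lemma as ``Folklore'' and supplies no proof of its own, so there is nothing to compare against line by line; your argument is the standard one (part (1) is essentially \cite[Lemma 3.4]{Kac}: the binomial commutation $f_j^M\,x\,v=\sum_k\binom{M}{k}(\ad f_j)^{\circ k}(x)\,f_j^{M-k}v$ with the Serre relations truncating the sum), and it is correct in substance. Two places deserve tightening. First, $\mathbb{C}v_\lambda$ is not itself an $\mathfrak{sl}_2(j)$-module; what you mean is that $U(\mathfrak{sl}_2(j))\cdot v_\lambda$ is a highest weight $\mathfrak{sl}_2$-module whose generator is killed by a power of $f_j$, which forces it to be the finite-dimensional simple and hence gives $\lambda(\alpha_j^\vee)\in\mathbb{Z}_{\geq 0}$ and $f_j^{\lambda(\alpha_j^\vee)+1}v_\lambda=0$. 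Second, in the last implication of (2), deducing finite-dimensionality of \emph{each} cyclic $\mathfrak{sl}_2(j)$-subspace directly from global character symmetry is not immediate; the clean route is to note that $s_j$-invariance of $\wt V$ forces $\lambda(\alpha_j^\vee)\in\mathbb{Z}_{\geq 0}$ (since $s_j\lambda\in\wt V\subseteq\lambda-\mathbb{Z}_{\geq 0}\Pi$) and then $\lambda-(\lambda(\alpha_j^\vee)+1)\alpha_j=s_j(\lambda+\alpha_j)\notin\wt V$ because $\lambda+\alpha_j\notin\wt V$; as $V_{\lambda-k\alpha_j}\ni f_j^kv_\lambda$, this gives $f_j^{\lambda(\alpha_j^\vee)+1}v_\lambda=0$ and part (1) finishes. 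With these repairs the proof is complete and is exactly the argument the paper implicitly relies on.
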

\noindent 
Integrability completely determines the weights of all simple $L(\lambda)$s, and weight-hulls for all $V$ \big(formulas \eqref{Wts of simples as PVM}--\eqref{Eqn conv wt V} above and Theorem~\ref{T2.3} below\big). 
\subsection{Parabolic Vermas, and on weights of $V$} \label{Subsection PVMs}
Parabolic Verma $\mathfrak{g}$-modules were introduced and studied in finite type by Lepowsky \cite{lepo1, L_JA}, and in the Kac--Moody setting \cite{GaLe, KuBGG, WaRo}.
See Humphrey's book \cite[Chapter 9]{Hump_BGG} for their properties. 
Their weights are central for us.
\begin{definition}\label{D2.1}
   (1) Fix $\lambda\in\mathfrak{h}^*$ and $\emptyset\neq J\subseteq J_{\lambda}$. 
 Let $L^{\max}_J(\lambda)$ and $L_J(\lambda)$ be the largest integrable highest weight module and and highest weight simple, with highest weight(s) $\lambda$, over $\mathfrak{g}_J$ 
	\big(equivalently over $\mathfrak{p}_J=(\mathfrak{h}+\mathfrak{g}_J)\oplus (\bigoplus_{\alpha\in \Delta^+\setminus\Delta^+_J}\mathfrak{g}_{\alpha})$, via the natural action of $\mathfrak{h}$ and trivial action of its nilradical $\bigoplus_{\alpha\in\Delta^+\setminus\Delta^+_J} \mathfrak{g}_{\alpha}$\big).
  By \cite[Corollary 10.4]{Kac}, when $\mathfrak{g}_J$ is symmetrizable, $L^{\max}_J(\lambda)=L_J(\lambda)$.\smallskip\\ 
    (2) For $J\subseteq J_{\lambda}$, the
	parabolic Verma $\mathfrak{g}$-module with highest weight and integrability $\big(\lambda, J\big)$:
		\[
		M(\lambda,J)\ \ \  :=\ \ \  U(\mathfrak{g})\otimes_{U(\mathfrak{p}_J)}L^{\max}_J(\lambda)\ \ \ 
		\underset{\mathfrak{g}-mod}{\simeq}\ \ \  M(\lambda)\bigg/\bigg(\sum_{j\in
		J}U(\mathfrak{g})f_{j}^{\langle\lambda,\alpha_j^{\vee}\rangle+1}m_{\lambda}\bigg).\]
   $L^{\max}_J(\lambda)$ = 
		parabolic $\mathfrak{g}_J$-Verma 
		corresponding to $(\lambda, J)$.
   $M(\lambda)\twoheadrightarrow V$ is $\mathfrak{g}_J$-integrable $\iff$ $M(\lambda,J) \twoheadrightarrow V$.
\end{definition}
In our results using the parabolic Vermas $M(\lambda,J)$, or more precisely their weights:
We make use of the $\mathfrak{g}_{J}$-integrability of $M(\lambda,J)$, and the following important slice-formulas for its weights (crucial for Khare et al. for obtaining weights of simples \eqref{Wts of simples as PVM}); in addition to the Minkowski difference formula \eqref{Eqn Mink diff PVM} 
(See e.g. \cite[Proposition 3.7 and Section 4]{Dhillon_arXiv} for their proofs and applications):
\begin{equation}\label{Int. slice. decomp. PVM}
\textbf{Integrable slice decomposition:}\hspace*{1cm}\wt M(\lambda,J)= \bigsqcup\limits_{\xi\ \in\ \mathbb{Z}_{\geq0}\Pi_{J^c}}\wt L^{\max}_J(\lambda-\xi). \hspace*{1.5cm} 
\end{equation}
In Theorem \ref{theorem J-slice decomp.} in Section \ref{Section 6}, we try to prove similar slice-decompositions more generally for any $M(\lambda)\twoheadrightarrow V$ along any $J_V\subseteq J \subseteq J_{\lambda}$.
Theorem \ref{T2.3} below, the complete result of Theorem \ref{Theorem wts of simples and hulls of all V} by Dhillon--Khare, while being inspirational for our paper, extends at the same time our results (Ex: Theorem \ref{thmB}) involving parabolic Vermas, for a large class of $\mathfrak{g}$-modules $V$ including all simple $V$. 
\begin{theorem}[{\cite[Theorems 2.9, 2.10]{Dhillon_arXiv} \& \cite[Theorem 3.13]{Khare_Ad}}]\label{T2.3}
Fix $\mathfrak{g}=\mathfrak{g}(A)$, $\lambda\in\mathfrak{h}^*$ and $J \subseteq J_{\lambda}$.
\begin{itemize}
\item[(a)] All $\mathfrak{g}$-modules $M(\lambda)\twoheadrightarrow V$ with integrability $J$, have the same weights, \ \ if and only if \\
the Dynkin subdiagram on $J_{\lambda}\setminus J$ is complete, i.e.,  $\langle\alpha_i,\alpha_j^{\vee}\rangle =A_{ji} \neq0,$ $\forall$ $i, j \in J_{\lambda}\setminus J$. 
\begin{equation}\label{wt formula for simples}
\wt L(\lambda) \ \ =\ \ \wt M(\lambda, J_{\lambda}) \ \ =  \wt L^{\max}_{J_{\lambda}}(\lambda)\ -\ \mathbb{Z}_{\geq0}(\Delta^+\setminus\Delta^+_{J_{\lambda}})\ \  = \ \ \bigsqcup\limits_{\xi\ \in\ \mathbb{Z}_{\geq0}\Pi_{J_{\lambda}^c}}\wt L^{\max}_{J_{\lambda}}(\lambda-\xi).
\end{equation}
\item[(b)] For $M(\lambda)\twoheadrightarrow V$, the following ``first-order'' data are equivalent:
\[
(1)\ \  I_V.\qquad\qquad  (2)\ \ \conv_{\mathbb{R}}(\wt V).\qquad \qquad (3)\ \ 
\text{The stabilizer of }\ \conv_{\mathbb{R}}(\wt V)\  \text{ in }\ W.
\]
Above, the hull and stabilizers are that of $M(\lambda, I_V )$ and the parabolic subgroup $W_{I_V}$.
\end{itemize}
\end{theorem}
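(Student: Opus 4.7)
The plan is to address part (b) first since it follows fairly directly from the results already in the excerpt, and then tackle part (a), whose iff statement will be the main obstacle.

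For part (b), I would use formula \eqref{Eqn conv wt V}, which asserts $\conv_{\mathbb{R}}(\wt V) = \conv_{\mathbb{R}}\big(W_{I_V} \cdot [\{\lambda\}-\mathbb{Z}_{\geq 0}\Pi_{\mathcal{I}\setminus I_V}]\big)$, depending only on the pair $(\lambda, I_V)$. So (1)$\Rightarrow$(2) is immediate, and (2)$\Rightarrow$(3) is automatic, as the $W$-stabilizer is a functional of the set. For (3)$\Rightarrow$(1), I would identify the $W$-stabilizer of the hull precisely as the standard parabolic $W_{I_V}$: containment $W_{I_V}\subseteq \mathrm{Stab}(\conv_{\mathbb{R}}(\wt V))$ is the $W_{I_V}$-symmetry asserted in \eqref{Eqn conv wt V}; for the reverse, any $w\in W$ stabilizing the hull must permute its extremal rays, which are generated by $-W_{I_V}\cdot \Pi_{\mathcal{I}\setminus I_V}$, and a standard root-system argument then forces $w$ to lie in the standard parabolic $W_{I_V}$. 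Since $I\mapsto W_I$ is injective on subsets of $\mathcal{I}$, this recovers $I_V$ uniquely.

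For the equality \eqref{wt formula for simples} in part (a), I would proceed in two steps. Step one: the Minkowski difference for $\wt M(\lambda, J_\lambda)$ and the slice identity $\wt M(\lambda, J_\lambda) = \bigsqcup_{\xi\in \mathbb{Z}_{\geq 0}\Pi_{J_\lambda^c}}\wt L^{\max}_{J_\lambda}(\lambda-\xi)$ follow from Proposition \ref{Proposition wts of parabolic Vermas} and \eqref{Int. slice. decomp. PVM} with $J=J_\lambda$. Step two: the non-trivial identity $\wt L(\lambda) = \wt M(\lambda, J_\lambda)$. The inclusion $\wt L(\lambda)\subseteq \wt M(\lambda, J_\lambda)$ is clear, as $L(\lambda)$ has integrability $J_\lambda$ so $M(\lambda, J_\lambda)\twoheadrightarrow L(\lambda)$. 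For the reverse inclusion, I would argue slice-by-slice: fix $\xi\in \mathbb{Z}_{\geq 0}\Pi_{J_\lambda^c}$ and $\nu\in \wt L^{\max}_{J_\lambda}(\lambda-\xi)$; choose a monomial $F=\prod_{i\in J_\lambda^c}f_i^{c(i)}$ that brings $\lambda$ to $\lambda-\xi$ while acting non-trivially on the highest weight line of $L(\lambda)$ (invoking Lemma \ref{Lemma freeness at module level}), then push the resulting non-zero vector $Fv_\lambda$ within the $\mathfrak{g}_{J_\lambda}$-submodule $U(\mathfrak{g}_{J_\lambda})\cdot Fv_\lambda$ using integrable $\mathfrak{sl}_2$-theory to attain the $W_{J_\lambda}$-conjugate $\nu$ of $\lambda-\xi$.

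The iff statement in (a) will be the main obstacle, since it encodes a subtle interaction between the Dynkin-geometric structure of $J_\lambda\setminus J$ and module-theoretic uniformity of weight-sets. For the reverse direction (contrapositive), assume the Dynkin subdiagram on $J_\lambda\setminus J$ is \emph{not} complete, so that some $i, j\in J_\lambda\setminus J$ have $A_{ji}=0$. Then the Serre relation $[f_i, f_j]=0$ makes $f_if_jv_\lambda\in M(\lambda, J)$ a candidate for generating a proper $\mathfrak{g}$-submodule: for $k\in J$ one checks $e_k f_i f_j v_\lambda=0$ by weight considerations and integrability, and for $k\in J_\lambda^c$ the same follows from $[e_k, f_i]=[e_k, f_j]=\delta_{k,i}\alpha_i^\vee+\delta_{k,j}\alpha_j^\vee$ acting appropriately on $v_\lambda$. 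Quotienting $M(\lambda, J)$ by $U(\mathfrak{g})\cdot f_if_jv_\lambda$ then yields a module $V'$ with $I_{V'}=J$ but $\lambda-\alpha_i-\alpha_j\notin \wt V'$, distinguishing it from $M(\lambda, J)$. For the forward direction, I would show that if the subdiagram on $J_\lambda\setminus J$ is complete, then any attempt to kill a weight of the form $\lambda-\sum c_k\alpha_k$ with $\mathrm{supp}\subseteq J_\lambda\setminus J$ in a quotient $V$ of $M(\lambda, J)$ must force at least one $f_k$ ($k\in J_\lambda\setminus J$) to act nilpotently on $v_\lambda$, thereby enlarging $I_V$ strictly beyond $J$, contradicting $I_V=J$; the completeness hypothesis enters precisely in preventing the PBW-style commutative factorizations needed to avoid this forced extra integrability. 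I expect this last implication to require the most careful case analysis, and it will likely occupy the bulk of the write-up.
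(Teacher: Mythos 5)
First, a point of comparison: the paper does not prove Theorem \ref{T2.3} at all --- it is imported verbatim from Dhillon--Khare and Khare, so there is no in-paper argument to measure yours against. Your sketches for part (b) and for the displayed identity \eqref{wt formula for simples} are consistent with how those sources (and the tools quoted in this paper, namely \eqref{Eqn conv wt V}, Proposition \ref{Proposition wts of parabolic Vermas}, \eqref{Int. slice. decomp. PVM} and Lemma \ref{Lemma freeness at module level}) fit together, and the step ``$Fv_\lambda$ is $\mathfrak{n}^+_{J_\lambda}$-maximal of weight $\lambda-\xi$, so $U(\mathfrak{g}_{J_\lambda})Fv_\lambda$ is an integrable highest weight $\mathfrak{g}_{J_\lambda}$-module containing the whole slice'' is the standard route.

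There is, however, a genuine gap in your ``only if'' direction of (a). The vector $f_if_jv_\lambda$ is \emph{not} a maximal vector in general: for $i\in J_\lambda\setminus J$ one computes $e_i\,f_if_j\,v_\lambda=[e_i,f_i]f_jv_\lambda=\langle\lambda-\alpha_j,\alpha_i^{\vee}\rangle f_jv_\lambda=\langle\lambda,\alpha_i^{\vee}\rangle f_jv_\lambda$ (using $\langle\alpha_j,\alpha_i^\vee\rangle=0$), and this is nonzero whenever $\langle\lambda,\alpha_i^{\vee}\rangle>0$. Your case analysis only treats $k\in J$ and $k\in J_\lambda^c$, silently skipping $k=i,j\in J_\lambda\setminus J$, which is exactly where the computation fails. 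Consequently $U(\mathfrak{g})f_if_jv_\lambda$ contains $f_jv_\lambda$, hence $e_jf_jv_\lambda=\langle\lambda,\alpha_j^{\vee}\rangle v_\lambda$, and the quotient collapses (or at least acquires strictly larger integrability) unless $\langle\lambda,\alpha_i^{\vee}\rangle=\langle\lambda,\alpha_j^{\vee}\rangle=0$. The correct singular vector for two non-adjacent $i,j\in J_\lambda$ is $f_i^{\langle\lambda,\alpha_i^{\vee}\rangle+1}f_j^{\langle\lambda,\alpha_j^{\vee}\rangle+1}v_\lambda$, of weight $(s_is_j)\bullet\lambda$ --- precisely the vectors underlying Definition \ref{Defn support of all holes of V}, Observation \ref{O4.1}(3) and \eqref{Eqn maximal vector example} --- and one then checks that the quotient by the submodule it generates still has integrability exactly $J$ (no $s_k\bullet\lambda$ with $k\in J_\lambda\setminus J$ lies below $(s_is_j)\bullet\lambda$) while losing that one-dimensional weight. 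Separately, your forward direction (``complete subdiagram $\Rightarrow$ all such $V$ have the same weights'') is only a statement of intent; the efficient route is through the machinery this paper builds precisely for that purpose (Theorems \ref{thmB} and \ref{thmC}): completeness of $J_\lambda\setminus J$ forces every independent $I\subseteq J_\lambda$ witnessing a lost weight to meet $J$, i.e.\ $J_V\subseteq J$ whenever $I_V=J$, whence $\wt V=\wt M(\lambda,J)$.
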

		\subsection{Useful Tools}\label{Subsec. useful results}
Here are important lemmas for proving our results on weights: 
	\begin{lemma}\label{Lemma reaching lambda from mu}
	 Let $\mathfrak{g}$ be a Kac--Moody algebra, and fix $\lambda\in\mathfrak{h}^*$, $M(\lambda)\twoheadrightarrow V$ and $\mu\in \wt V$. Then:
  \begin{itemize}
  \item[(a)] The weight space $V_{\mu}$ is spanned by the weight vectors of the form- 
		\begin{equation}\label{Eqn seq. of simple roots from mu to lambda at module level}
	 	f_{i_1}\ \cdots\  f_{i_n}\ \cdot\ v_{\lambda}\qquad\quad \text{for }\ f_{i_j}\in\mathfrak{g}_{-\alpha_{i_j}}\  \text{ and } \ i_j\in\mathcal{I} \ \forall\ j\in \{1,\ldots, n\},\ \ \text{ and }\ \ \lambda-\mu= \sum_{j=1}^{n}\alpha_{i_j}.
		\end{equation}
  \item[(b)] Hence, for every $\mu\in\wt V$, there exists a sequence of simple roots $\alpha_{i_1},\ldots, \alpha_{i_n} \in \Pi$, such that $\lambda-\sum\limits_{t=1}^k\alpha_{i_t} \ \in\  \wt V $ \ $\forall$ $k\leq n$ (each ``partial sum'' gives rise to a weight), and $\mu= \lambda-\sum\limits_{t=1}^n \alpha_{i_t}$.
  \end{itemize}
	\end{lemma}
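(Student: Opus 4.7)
The plan is to derive (a) directly from the triangular decomposition of $\mathfrak{g}$ together with the Chevalley generation of $\mathfrak{n}^-$, and then obtain (b) as a direct consequence of (a) by tracking how a monomial $f_{i_1}\cdots f_{i_n}$ builds up the weight one step at a time.

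For (a), I would first invoke the triangular decomposition $\mathfrak{g}=\mathfrak{n}^-\oplus\mathfrak{h}\oplus\mathfrak{n}^+$ and the PBW theorem to see that $V=U(\mathfrak{g})\cdot v_{\lambda}=U(\mathfrak{n}^-)\cdot v_{\lambda}$, using that $\mathfrak{h}$ acts on $v_{\lambda}$ by the scalar $\lambda$ and that $\mathfrak{n}^+\cdot v_{\lambda}=0$. Next I would use that $\mathfrak{n}^-$ is generated as a Lie algebra by the Chevalley generators $\{f_i\mid i\in\mathcal{I}\}$ (see Remark \ref{R2.9}), so $U(\mathfrak{n}^-)$ is generated as an associative algebra by the same set. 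Hence every vector in $V$ is a $\mathbb{C}$-linear combination of monomials $f_{i_1}\cdots f_{i_n}\cdot v_{\lambda}$; projecting onto $V_{\mu}$ and using that each $f_i$ shifts weights by $-\alpha_i$ forces the contributing monomials to satisfy $\sum_{j}\alpha_{i_j}=\lambda-\mu$, which is (a).

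For (b), given $\mu\in\wt V$, I would pick a non-zero $v\in V_{\mu}$ and, using (a), write it as a linear combination of products of the form \eqref{Eqn seq. of simple roots from mu to lambda at module level}. At least one such product must itself be non-zero, say $f_{j_1}\cdots f_{j_n}\cdot v_{\lambda}\neq 0$. Reading from the right, each intermediate tail $f_{j_k}\cdots f_{j_n}\cdot v_{\lambda}$ must then also be non-zero, since otherwise applying $f_{j_{k-1}}\cdots f_{j_1}$ to a zero vector would give zero, contradicting our choice. Thus each intermediate weight $\lambda-\sum_{t=k}^n\alpha_{j_t}$ lies in $\wt V$. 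Setting $i_t:=j_{n-t+1}$ reverses the enumeration, so that the partial sums $\lambda-\sum_{t=1}^k\alpha_{i_t}$ traverse precisely these weights, giving the required chain in (b).

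There is no serious obstacle: the lemma is essentially a repackaging of the standard identification $V=U(\mathfrak{n}^-)\cdot v_{\lambda}$ together with Chevalley generation of $\mathfrak{n}^-$. The only small care needed is the order-reversal in passing from (a) to (b), so that ``partial sums from the left'' in the sequence of simple roots match ``nested suffixes from the right'' in the corresponding product of $f_i$'s.
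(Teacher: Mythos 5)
Your proof is correct. The paper states this lemma without proof, treating it as a standard consequence of $V=U(\mathfrak{n}^-)\cdot v_{\lambda}$ and the Chevalley generation of $\mathfrak{n}^-$, and your argument — PBW plus weight-grading for (a), then extracting a single non-vanishing monomial and observing that all its right-hand tails are non-zero for (b) — is exactly the intended one, including the order-reversal needed to turn nested suffixes into partial sums.
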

 An important lemma on weight-strings for going-up by $e_i$s, which will be employed in almost every proof in this paper: 
  \quad \big(Recall the interval notation in \eqref{E2.5}.\big) 
 \begin{lemma}\label{L3.1}
		Let $\mathfrak{g}$ and $V$ be as above.
		Fix a real root $\alpha\in \Delta$ and a weight $\mu\in\wt V$.
		\begin{equation}
  \langle\mu,\ \alpha^{\vee}\rangle\geq 0 \qquad \implies \qquad \big[\mu-\lceil\langle\mu,\ \alpha^{\vee}\rangle\rceil\alpha,\ \mu\big]\ \subseteq\ \wt V.
  \end{equation}
\begin{equation} 
  \text{So}, \qquad s_{\alpha}\mu\ \ =\ \ \mu-\langle\mu,\ \alpha^{\vee}\rangle\alpha \ \ \in \ \wt V \quad \text{when}\quad \langle\mu,\ \alpha^{\vee}\rangle\in\mathbb{Z}_{\geq 0}.
   \end{equation}
   Where $\lceil \cdot\rceil$ = lowest integer function on $\mathbb{Z}$, and  $s_{\alpha}$ = reflection about the hyperplane $\textnormal{kernel}(\alpha)$.  
	\end{lemma}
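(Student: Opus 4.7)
The strategy is to apply classical $\mathfrak{sl}_2$-theory to the triple $\mathfrak{sl}_2^\alpha := \mathbb{C}\{e_\alpha,\ f_\alpha,\ \alpha^\vee\} \subseteq \mathfrak{g}$ carved out by the real root $\alpha$; this is a genuine $\mathfrak{sl}_2$-subalgebra precisely because $\alpha$ is real, forcing $\langle\alpha,\alpha^\vee\rangle = 2$. I may take $\alpha \in \Delta^+$ without loss, since $s_\alpha = s_{-\alpha}$ and the ceiling $\lceil\langle\mu,\alpha^\vee\rangle\rceil\alpha$ is calibrated to descend from $\mu$. The crucial module-level input is that $e_\alpha$ acts locally nilpotently on $V$: it raises the $\mathfrak{g}$-weight by $\alpha$, and the weights of $V$ are bounded above by $\lambda$, so iterating $e_\alpha$ on any vector must eventually produce zero.

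\textbf{Raise to a local $\mathfrak{sl}_2$-highest weight vector.} Pick $0 \neq v \in V_\mu$, and let $k \geq 0$ be the largest integer with $w := e_\alpha^k v \neq 0$, so that $e_\alpha w = 0$. Then $w \in V_{\mu + k\alpha}$ and $\alpha^\vee$ acts on $w$ by the scalar $c := n + 2k$, where $n := \langle\mu,\alpha^\vee\rangle \geq 0$ by hypothesis. The submodule $U(\mathfrak{sl}_2^\alpha)\,w \subseteq V$ is therefore a highest weight $\mathfrak{sl}_2^\alpha$-module of highest weight $c$, hence a quotient of the $\mathfrak{sl}_2$-Verma $M_{\mathfrak{sl}_2}(c)$.

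\textbf{Descend by $f_\alpha$.} I now invoke standard $\mathfrak{sl}_2$-Verma theory, split into two cases. If $c \notin \mathbb{Z}_{\geq 0}$, which (since $k \in \mathbb{Z}_{\geq 0}$ and $n \geq 0$) forces $n \notin \mathbb{Z}$, then $M_{\mathfrak{sl}_2}(c)$ is simple and $f_\alpha^j w \neq 0$ in $V$ for every $j \geq 0$. If $c \in \mathbb{Z}_{\geq 0}$ (forcing $n \in \mathbb{Z}_{\geq 0}$, so $\lceil n\rceil = n$), the unique maximal proper submodule of $M_{\mathfrak{sl}_2}(c)$ is generated by a vector of $\alpha^\vee$-weight $-c-2$, so in any quotient $f_\alpha^j w \neq 0$ for $0 \leq j \leq c = n + 2k$. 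Translating back, $f_\alpha^{k+l} w \in V_{\mu - l\alpha}$ is nonzero for every $l$ with $0 \leq l \leq \lceil n\rceil$ (in the integer case because $k + \lceil n\rceil = k + n \leq n + 2k = c$; in the non-integer case with no upper bound at all). This yields $[\mu - \lceil n\rceil\alpha,\ \mu] \subseteq \wt V$, and the second assertion $s_\alpha\mu = \mu - n\alpha \in \wt V$ (when $n \in \mathbb{Z}_{\geq 0}$) is simply the specialization $l = n$.

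I do not anticipate any serious obstacle, as the argument is a textbook application of $\mathfrak{sl}_2$-representation theory transported into $V$ via a single real-root subalgebra. The one mild subtlety is the appearance of the ceiling function: it is forced because $\langle\mu,\alpha^\vee\rangle$ need not be an integer for $\lambda \in \mathfrak{h}^*$ arbitrary, and in that non-integer regime the $\alpha$-string through $\mu$ is in fact infinite downward in $V$, with $\lceil n\rceil$ merely recording the minimal guaranteed length.
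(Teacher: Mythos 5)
Your argument is essentially identical to the paper's proof: both raise a vector of $V_\mu$ by $e_\alpha$ to a maximal vector $w=e_\alpha^k v$ of $\mathfrak{sl}_2^\alpha$-weight $\langle\mu,\alpha^\vee\rangle+2k$, then use the structure of quotients of the $\mathfrak{sl}_2$-Verma module (simple when the highest weight is not a non-negative integer, otherwise with maximal submodule starting at depth $c+1$) to conclude $f_\alpha^{k+l}w\neq 0$ for $0\le l\le\lceil\langle\mu,\alpha^\vee\rangle\rceil$. The case split and the translation back to weights of $V$ match the paper's, so the proposal is correct and requires no changes.
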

	\begin{proof}[\textnormal{\textbf{Proof}}]
		Let $\mu$ and $\alpha$ be as in the lemma.
  Fix $e\neq 0\in \mathfrak{g}_{\alpha}$ and $f\neq 0
		\in \mathfrak{g}_{-\alpha}$ -- recall, $\dim(\mathfrak{g}_{\pm\alpha})=1$ since $\alpha$ is real --  and a weight vector $x\neq 0\in V_{\mu}$.
		Recall, $e$ acts locally nilpotently on $V$, and nilpotently on $x$. 
		So, we fix $n\in\mathbb{Z}_{\geq 0}$ with $e^n x\neq 0$ and $e^{n+1}x=0$.
		Now $U(\mathfrak{g}_{-\alpha})(e^{n}x)$ is a highest weight module over $\mathfrak{sl}_{\alpha}=\mathfrak{g}_{-\alpha}\oplus \mathbb{C}\alpha^{\vee}\oplus\mathfrak{g}_{\alpha}\ \big(\simeq \mathfrak{sl}_2(\mathbb{C})\big)$, with the highest weight $\mu+n\alpha$ \big(or $\langle\mu+n\alpha,\ \alpha^{\vee}\rangle$\big). Since $\langle\mu,\alpha^{\vee}\rangle\geq 0$, note that if $\langle\mu+n\alpha,\alpha^{\vee}\rangle=\langle\mu,\alpha^{\vee}\rangle+2n\in\mathbb{Z}_{\geq 0}$, then $\langle\mu,\alpha^{\vee}\rangle\in\mathbb{Z}_{\geq 0}$, implying $\lceil\langle\mu,\alpha^{\vee}\rangle\rceil=\langle\mu,\alpha^{\vee}\rangle$. 
  Observe by the standard calculation over $\mathfrak{sl}_2(\mathbb{C})$, $f^{k}(e^nx)\neq 0$ for any: 1)~$k\in\mathbb{Z}_{\geq 0}$, if $\langle\mu,\alpha^{\vee}\rangle+2n\notin \mathbb{Z}_{\geq 0}$; 2)~$k\in \big\{0,\ldots,  \langle\mu,\alpha^{\vee}\rangle+2n\big\}$, if $\langle\mu,\alpha^{\vee}\rangle+2n\in\mathbb{Z}_{\geq 0}$. 
  This is because $e^n x\neq 0$ is a maximal vector for $e$, and so it generates a Verma module or a $\big(\mu(\alpha^{\vee})+2n+1\big)$-dim. simple over $\mathfrak{sl}_{\alpha}$, where-in clearly this phenomenon is true.
  Now,
		\[f^k(e^nx)\neq 0\quad \text{for all }\ k\in\big\{n,\ldots, n+\lceil \langle\mu,\alpha^{\vee}\rangle\rceil\big\} \ \ \implies\ \  \big[\mu-\lceil\langle\mu,\alpha^{\vee}\rangle\rceil\alpha,\ \mu\big]\subseteq \wt V. \vspace*{-0.55cm}
		\]
	\end{proof}
 \begin{remark}
   The above proof relies on going-up from $V_{\mu}$ by $e_i$s, which worked under the hypothesis of the lemma that $\mu$ is a weight of the ``top part'' or the simple/integrable quotient (over $\mathfrak{sl}_{\alpha_i}$) of $U(\mathfrak{sl}_{\alpha_i}) V_{\mu+n\alpha}$.
   In general, $\mu$ need not be in the top part of any $\alpha_{i_1}$-string, Ex: 
     \[
\mathfrak{g}=\mathfrak{sl}_{\alpha_{i_1}}\ \simeq \mathfrak{sl}_2(\mathbb{C}),\ V=M(0) \ \simeq \mathbb{C}[f_{i_1}], \ \mu = \lambda - 2\alpha_{i_1}. \text{ Then }e_{i_1}^2 \cdot f_{i_2}^2\cdot m_0\ = \ -2e_{i_1}\cdot f_{i_1}\cdot m_0  \ = \ 0.
\]
 (1) Such cases arising make it difficult to work at the module level by the actions of $e_is$ and $f_i$s; thus, exploring Lemma 4.7 case (c) in \cite{WFHWMRS} might be interesting.
 So we work based on (induction on) heights of weights here.
(2) On the other hand, along free-directions, these raising techniques work, as explored in Proposition \ref{Enumeration proposition- freeness at module level}, in Theorem \ref{Corollary local Weyl group invariance 2} and in the discussion in Section \ref{Section 7}.
 \end{remark}
 Next, we recall the {\it Parabolic Partial sum property} of Kac--Moody root systems from a previous work \cite{Teja_ArXiv} (and from extended abstract \cite{Teja_Fpsac})  which generalizes the well-known {\it partial sum property} of root systems and yields a minimal description for weights of all parabolic Verma and simple $V$, as shown there-in.
Now in the present work, it is a key tool in the proof of Theorem \ref{thmA}.
\begin{itemize}
\item We begin by extending the height function. 
For $I\subseteq \mathcal{I}$:
\begin{equation}\label{Eqn defn. I-height}
\height_I\bigg(\sum_{i\in \mathcal{I}} c_i\alpha_i\bigg)\ \  =\ \  \sum_{i\in I}c_i\qquad \forall\ \ c_i\in \mathbb{C}.
\end{equation}
\end{itemize}
 \begin{definition}[P-PSP]\label{p-psp}
  	A Kac--Moody root system $\Delta$ is said to have the \textit{Parabolic-partial sum property} if:
   given any $\emptyset\neq I\subseteq\mathcal{I}$ and a root $\beta\in\Delta^+$ with $\height_I(\beta)>1$, there exists a root $\gamma\in \Delta_{I,1}:=\{\alpha \in \Delta \text{ }|\text{ } \height_I(\alpha)=1\}$ such that $\beta -\gamma \in \Delta$.
   \end{definition}
   \begin{theorem}[{\cite[Theorem A]{Teja_ArXiv} and \cite{Teja_Fpsac}}]\label{Thm P-Psp} Let $\mathfrak{g}, \Delta, \lambda, J$ as above. The P-PSP holds in all Kac--Moody root systems.
   \begin{equation}
       \Delta_{I, 1} \ \ \text{ is the smallest generating set for the cone }\ \ \mathbb{Z}_{\geq 0}\left(\Delta^+\setminus \Delta_I^+\right).\  \text{Thereby,}
   \end{equation}
     \begin{eqnarray}
     \text{Minimal descriptions }:\qquad \quad  \wt M(\lambda, J) & \  = \ \ \wt L^{\max}_J(\lambda)\ - \ \mathbb{Z}_{\geq 0}\Delta_{J^c, \ 1},\qquad \\
       \wt L(\lambda) \ &  = \ \ \wt L^{\max}_{J_{\lambda}}(\lambda)\ -\ \mathbb{Z}_{\geq 0}\Delta_{J_{\lambda}^c, \ 1}.\label{Minimal description for wt of simples by p-psp}
   \end{eqnarray}
   \end{theorem}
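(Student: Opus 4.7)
The plan is to first observe that the three assertions chain together, so only the P-PSP itself requires real work. Once the P-PSP is established, the cone equality $\mathbb{Z}_{\geq 0}(\Delta^+ \setminus \Delta_I^+) = \mathbb{Z}_{\geq 0}\Delta_{I,1}$ follows by iteration, and the minimal descriptions for $\wt M(\lambda, J)$ and $\wt L(\lambda)$ then follow by substituting into the Minkowski difference formula \eqref{Eqn Mink diff PVM} and the weight formula \eqref{Wts of simples as PVM}. For the iteration: given $\beta \in \Delta^+$ with $\height_I(\beta) = k \geq 2$, the P-PSP supplies $\gamma_1 \in \Delta_{I,1}$ with $\delta_1 := \beta - \gamma_1 \in \Delta$, and since $\height_I(\delta_1) = k-1 \geq 1 > 0$ forces $\delta_1 \in \Delta^+$, recursion yields $\beta = \gamma_1 + \cdots + \gamma_k$ with each summand in $\Delta_{I,1}$. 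Minimality of $\Delta_{I,1}$ is then immediate, since any $\gamma \in \Delta_{I,1}$ has $\height_I(\gamma) = 1$ and so is indecomposable inside $\mathbb{Z}_{\geq 0}(\Delta^+ \setminus \Delta_I^+)$, whose non-zero elements have $\height_I \geq 1$.

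For the P-PSP itself I plan to induct on $\height(\beta)$. The base case $\height(\beta) = 2$ forces $\beta = \alpha_i + \alpha_{i'}$ with $i, i' \in I$, and $\gamma = \alpha_i$, $\delta = \alpha_{i'}$ works. For the inductive step, use the classical PSP to write $\beta = \beta' + \alpha_j$ with $\beta' \in \Delta^+$ and $\alpha_j \in \Pi$: if $\alpha_j \in \Pi_I$ and $\height_I(\beta') = 1$, simply take $\gamma = \beta'$, $\delta = \alpha_j$; otherwise apply the inductive hypothesis to $\beta'$ (which in the remaining cases has $\height_I(\beta') \geq 2$) to get $\beta' = \gamma' + \delta'$ with $\gamma' \in \Delta_{I,1}$ and $\delta' \in \Delta^+$ (positivity of $\delta'$ follows since $\height_I(\delta') \geq 1$), and then try to absorb $\alpha_j$ into $\gamma'$ (when $\alpha_j \in \Pi_{I^c}$ and $\gamma' + \alpha_j \in \Delta$, giving $\gamma' + \alpha_j \in \Delta_{I,1}$) or into $\delta'$ (when $\delta' + \alpha_j \in \Delta$).

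The hard part will be the residual case where neither absorption directly yields a root. Here I would abandon the purely combinatorial induction and appeal to the Lie-algebraic structure: the nilradical $\bigoplus_{\alpha \in \Delta^+ \setminus \Delta_{I^c}^+} \mathfrak{g}_\alpha$ of $\mathfrak{p}_{I^c}$, graded by $I$-height into components $\mathfrak{g}_{(k)} := \bigoplus_{\height_I(\alpha) = k} \mathfrak{g}_\alpha$, is generated by $\mathfrak{g}_{(1)}$ as a Lie algebra, i.e.\ $\mathfrak{g}_{(k)} = [\mathfrak{g}_{(1)}, \mathfrak{g}_{(k-1)}]$ for every $k \geq 2$. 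This I would prove by repeated Jacobi identity manipulations that bubble one Chevalley generator $e_i$ (with $i \in I$) to the outermost position of any iterated bracket in $\mathfrak{g}_{(k)}$; granting it, $0 \neq \mathfrak{g}_\beta \subseteq [\mathfrak{g}_{(1)}, \mathfrak{g}_{(k-1)}]$ forces the existence of $\gamma \in \Delta_{I,1}$ and $\delta \in \Delta$ with $\gamma + \delta = \beta$, finishing the P-PSP. The technical crux of this step is the bookkeeping, namely tracking the index multisets of the Chevalley generators across the Jacobi rearrangements and handling Serre-relation cancellations modulo the maximal ideal defining $\mathfrak{g}(A)$.
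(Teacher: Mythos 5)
First, a point of order: the paper does not prove Theorem~\ref{Thm P-Psp} at all --- it is imported verbatim from \cite{Teja_ArXiv} and \cite{Teja_Fpsac} and used here only as a black box --- so there is no in-paper proof to compare yours against line by line. The closest in-paper relative is Lemma~\ref{Lemma PSP in unit I-ht roots}, which handles the roots of unit $I$-height by a representation-theoretic device (realizing $\Delta_{I,-1}$ as the weight-set of the integrable highest weight $\mathfrak{g}_{I^c}$-module $U(\mathfrak{n}^-_{I^c})\cdot f_i$); that suggests the cited proof also exploits the Levi-module structure of the nilradical, whereas your endgame is purely Lie-algebra-structural. Your reduction chain is sound: P-PSP plus the recursion on $\height_I$ gives $\mathbb{Z}_{\geq 0}\big(\Delta^+\setminus\Delta_{I^c}^+\big)=\mathbb{Z}_{\geq 0}\Delta_{I,1}$ (note the statement in the paper has an $I$ versus $I^c$ slip in the displayed cone, which you have silently and correctly repaired), the indecomposability of $\height_I=1$ elements gives minimality, and substitution into \eqref{Eqn Mink diff PVM} and \eqref{Wts of simples as PVM} gives the minimal descriptions.

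The one place your write-up is genuinely thin is the final step, which is also the only step that matters: once you grant $\mathfrak{g}_{(k)}=[\mathfrak{g}_{(1)},\mathfrak{g}_{(k-1)}]$ for $k\geq 2$, the P-PSP follows in one line for \emph{every} $\beta$ with $\height_I(\beta)\geq 2$, so the entire combinatorial induction with its absorption cases is redundant scaffolding. The generation statement is true, but ``bubbling $e_i$ to the outermost position by Jacobi'' is not literally how the argument runs. The clean version is a double induction: use that $\mathfrak{n}^+$ is generated by the $e_j$, so $\mathfrak{g}_\beta=\sum_j[\mathfrak{g}_{\alpha_j},\mathfrak{g}_{\beta-\alpha_j}]$ for $\height(\beta)\geq 2$; for $j\in I$ either both factors already lie in $\mathfrak{g}_{(1)}$ (when $k=2$) or one invokes the inductive hypothesis together with the auxiliary containment $[\mathfrak{g}_{(a)},\mathfrak{g}_{(b)}]\subseteq[\mathfrak{g}_{(1)},\mathfrak{g}_{(a+b-1)}]$ (itself proved by Jacobi plus induction on $a$); for $j\in I^c$ one uses that $\ad e_j$ preserves each graded piece $\mathfrak{g}_{(l)}$ and inducts on total height. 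You should also drop the worry about ``Serre-relation cancellations modulo the maximal ideal'': the argument never needs the defining relations, only that $\mathfrak{n}^+$ is generated by the $e_j$ and that $\mathfrak{g}_\beta\neq 0$ exactly when $\beta\in\Delta$, which is why it works uniformly for all intermediate quotients $\overline{\mathfrak{g}}(A)\twoheadrightarrow\tilde{\mathfrak{g}}\twoheadrightarrow\mathfrak{g}(A)$ as in Remark~\ref{R2.9}. With that step written out, your proof is complete and self-contained, which is more than the present paper offers for this theorem.
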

   Here is a version of Lemma \ref{Lemma reaching lambda from mu}(b) and of the P-PSP for roots subsets $X=\Delta_{I,1}$, for going-up inside their cones, which is needed in proving our weight-formulas (Theorems \ref{thmA} and \ref{thmC}):
   \begin{lemma}\label{Lemma PSP in unit I-ht roots}
       Let $\mathfrak{g}, \Delta, \emptyset \neq I\subseteq \mathcal{I}$ be as above. 
       Fix a root $\beta\in \Delta_{I,1}$ with $\height_{I^c}(\beta)>0$. 
       There exists $j\in I^c \cap \supp(\beta)$ with $\beta-\alpha_j\ \in \Delta_{I,1}$.
       In turn, we have a chain of roots in $\Delta_{I,1}$ below $\beta$: 
       \[
       \beta_1\ = \ \beta\ \succneqq\  \beta_1\ \succneqq \ \cdots\  \succneqq \ \beta_m\ \in \Delta_{I,1},\quad \text{with }\ m= \height(\beta),\ \  \beta_{t}-\beta_{t+1}\in \Pi_{I^c}\ \forall\ t.  
       \]
       
       \end{lemma}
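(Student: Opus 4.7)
The plan is to prove the existence of a simple-root descent (i.e., $\beta - \alpha_j \in \Delta_{I,1}$ for some $j \in I^c \cap \supp(\beta)$) by strong induction on $\height_{I^c}(\beta)$, and then iterate this descent to build the chain. The base case $\height_{I^c}(\beta) = 1$ is immediate: the hypotheses $\height_I(\beta) = 1 = \height_{I^c}(\beta)$ force $\beta = \alpha_i + \alpha_j$ for some unique $i \in I$ and $j \in I^c$, whence $\beta - \alpha_j = \alpha_i \in \Delta_{I,1}$.

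For the inductive step $\height_{I^c}(\beta) \geq 2$, I would first apply the classical partial sum property to $\beta$ (of height $\geq 3$) to obtain some $k \in \supp(\beta)$ with $\beta - \alpha_k \in \Delta^+$; if $k \in I^c$ we are done, taking $j=k$. The hard case is $k = i$, the unique element of $I \cap \supp(\beta)$, where $\delta := \beta - \alpha_i$ is a positive root supported entirely in $I^c$ with $\height(\delta) = \height_{I^c}(\beta) \geq 2$. Applying the classical PSP to $\delta$, we obtain $j \in \supp(\delta) \cap I^c$ with $\delta - \alpha_j \in \Delta^+$. To lift this to a descent of $\beta$ itself, I would invoke $\mathfrak{sl}_2$-string theory along $\alpha_i$: the $\alpha_i$-string through $\delta - \alpha_j \in \Delta_{I^c}^+$ has lower-end $p=0$ (since $(\delta - \alpha_j) - \alpha_i$ has mixed-sign coefficients and so is not a root), and upper end $q = -\langle \delta - \alpha_j, \alpha_i^\vee \rangle = -\langle \delta, \alpha_i^\vee \rangle + \langle \alpha_j, \alpha_i^\vee\rangle$. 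Since $\beta = \alpha_i + \delta \in \Delta^+$, the analogous string through $\delta$ yields $-\langle \delta, \alpha_i^\vee\rangle \geq 1$, which combined with a judicious choice of $j$ (e.g.\ $j$ not adjacent to $i$, so that the Cartan-matrix entry $\langle \alpha_j, \alpha_i^\vee\rangle = 0$) guarantees $q \geq 1$ and hence $\beta - \alpha_j = \alpha_i + (\delta - \alpha_j) \in \Delta^+$; positivity of coefficients places it in $\Delta_{I,1}^+$.

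The chief obstacle is the pathological configuration where the only PSP-candidate $j \in \supp(\delta)$ provided by classical PSP is adjacent to $i$ with $\delta_j = 1$ and all other elements of $\supp(\delta)$ non-adjacent to $i$: there, the naive string-lift fails (as $q = 0$). To circumvent this, I would either iterate the classical PSP on the smaller positive root $\delta_N := \delta - \alpha_j \in \Delta_{I^c}^+$ (supported strictly inside $\supp(\delta)$), or invoke the Parabolic-PSP (Theorem \ref{Thm P-Psp}) with $I' = I^c$, which gives $\gamma \in \Delta_{I^c,1}$ with $\beta - \gamma \in \Delta$; writing $\gamma = \alpha_{j'} + \eta$ with $j' \in I^c$ and $\eta \in \mathbb{Z}_{\geq 0}\Pi_I$, the coefficient-sign constraint on $\beta - \gamma$ forces $\eta \in \{0, \alpha_i\}$, and careful analysis of the Dynkin sub-diagram on $\supp(\beta)$ (using connectivity of $\supp(\beta)$ inherited from $\beta$ being a root) then extracts an alternative $j' \in \supp(\delta)$ not adjacent to $i$ with $\delta - \alpha_{j'} \in \Delta^+$, for which the string-lift succeeds. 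Once the descent step is established, the chain $\beta_1 = \beta \succneqq \beta_2 \succneqq \cdots \succneqq \beta_m = \alpha_i$ with $\beta_t - \beta_{t+1} \in \Pi_{I^c}$ and $m = \height(\beta)$ is built by iterating: at each stage $\beta_t \in \Delta_{I,1}^+$ with $\height_{I^c}(\beta_t) \geq 1$ admits a descent to $\beta_{t+1} \in \Delta_{I,1}^+$, and after $\height_{I^c}(\beta) = m - 1$ steps we arrive at the unique height-$1$ element of $\Delta_{I,1}^+$, namely $\alpha_i$.
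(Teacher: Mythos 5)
Your overall strategy (induct on $\height_{I^c}(\beta)$ and descend by one simple root of $I^c$ at a time) is genuinely different from the paper's, and your base case and the final chain-building step are fine; but the inductive step has a real gap exactly where you flag ``the chief obstacle''. In the hard case you pass to $\delta=\beta-\alpha_i$ and try to lift a descent $\delta-\alpha_j\in\Delta^+$ back to $\beta$; the string-lift needs $q=-\langle\delta-\alpha_j,\alpha_i^{\vee}\rangle\geq 1$, which you only get for free when $j$ is not adjacent to $i$. Your proposed repair is to ``extract an alternative $j'$ not adjacent to $i$ with $\delta-\alpha_{j'}\in\Delta^+$'', but such a $j'$ need not exist: in type $C_3$ (with $\alpha_3$ long) take $i=1$ and $\delta=2\alpha_2+\alpha_3$; the only simple-root descent of $\delta$ is by $\alpha_2$, which is adjacent to node $1$. (There the lemma still holds because the adjacent descent happens to lift, $q=1$; but your argument gives no reason why $q$ cannot be $0$ for \emph{every} available adjacent $j$ in some other configuration.) The appeal to the Parabolic-PSP with $I'=I^c$ does not close this either: in the case $\eta=\alpha_i$ of your own case analysis you land back in exactly the same situation, namely a $j'$ adjacent to $i$ for which only $\delta-\alpha_{j'}\in\Delta$ is known. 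So the descent step --- which is the actual content of the lemma --- is asserted rather than proved.

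For comparison, the paper sidesteps this combinatorics entirely by a representation-theoretic device: with $\{i\}=\supp(\beta)\cap I$, the root vector $f_i$ is a maximal vector for the $\mathfrak{n}_{I^c}^+$-action and generates an \emph{integrable} highest weight $\mathfrak{g}_{I^c}$-module $M=U(\mathfrak{n}_{I^c}^-)\cdot f_i\subseteq\mathfrak{g}$ with highest weight $-\alpha_i$; one shows $\wt M=\Delta_{I,-1}$ (via $W_{I^c}$-conjugation to $I^c$-dominant weights and Kac's weight criterion), and then Lemma \ref{Lemma reaching lambda from mu}(b) applied to $(\mathfrak{g}_{I^c},M,\beta)$ produces the entire chain at once. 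If you want to salvage your combinatorial route, the missing piece is precisely a proof that, whenever no non-adjacent descent of $\delta$ lifts, some adjacent $j$ satisfies $\langle\delta-\alpha_j,\alpha_i^{\vee}\rangle\leq -1$; as written, that is exactly the point left open.
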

       \begin{proof}
           Let $I$ and $\beta$ be as in the lemma, and $\{i\}=\supp(\beta)\cap I$.
           We show the result for roots in $\Delta_{I,-1}=\big\{ \alpha\in
           \Delta\ \big|\ \height_I(\alpha)=-1\big\}$; by symmetry this proves the lemma. 
           Note, $f_i$ is a maximal vector for $\mathfrak{n}_{I^c}^+$-action, and it generates an integrable highest weight $\mathfrak{g}_{I^c}$-module $M=U\big(\mathfrak{n}^-_{I^c}\big) \cdot f_i$ in $\mathfrak{g}$, with highest weight $-\alpha_i\big|_{\mathfrak{h}_{I^c}^*}$.
           Now $\wt M\ = \ \Delta_{I,-1}$ \big(in $\mathfrak{h}_{I^c}^*$, and so in $\mathfrak{h}^*$\big), which is easily seen as follows.
           Any root $\gamma\in \Delta_{I,-1}$ can be $W_{I^c}$-conjugated to a dominant integral weight $\gamma'$ in $\mathfrak{h}_{I^c}$; i.e., $\gamma'(\alpha_j^{\vee})\in \mathbb{Z}_{\geq 0}$ $\forall$ $j\in I^c$.
           Now $\gamma'\preceq -\alpha_i\ \in \Delta_{I,-1}$ and  $\supp(\gamma')$ is connected.
          So by weight-formula {\color{black}\cite[Proposition 11.2]{Kac}} for integrable $V$,  $\Delta_{I,-1}\ = \wt M\ = \ \wt L_{I^c}(-\alpha_i) \ =\ \wt L_{I^c}^{\max}(-\alpha_i)$.
           Finally, apply Lemma \ref{Lemma reaching lambda from mu} for \big($\mathfrak{g}_{I^c},\ M,\ \beta\in \wt M$\big) to complete the proof. 
       \end{proof}
    With all the above notations and tools in-hand, we begin writing the proofs of our results. 
   \section{Proof of Theorem \ref{thmA}: First Minkowski difference weight-formula for $V$}\label{Section proof of thmA}
	The goal of this section is to show the following uniform weight-formula (Theorem \ref{thmA}) for all highest weight $\mathfrak{g}$-modules $M(\lambda)\twoheadrightarrow V$, for all $\lambda\in \mathfrak{h}^*$: \ \ \  \big(See Definition \ref{Defn wts wrt I} for the notation $\wt_{J_{\lambda}}V$.\big)
		\begin{equation}\label{Eqn refined Min. formula via P-PSP}
	\qquad\wt V \ \ =\ \  \wt _{J_{\lambda}}V\ -\ \mathbb{Z}_{\geq 0}\Delta_{J_{\lambda}^c,1}\ \ =\ \ \wt _{J_{\lambda}}V-\mathbb{Z}_{\geq 0}(\Delta^+\setminus \Delta_{J_{\lambda}}^+). \quad \big(\text{Extending }\eqref{wt formula for simples}, \eqref{Minimal description for wt of simples by p-psp}.\big) 
		\end{equation}
 	\begin{remark}
 		Evidently by the above formulas, for studying the weights of arbitrary modules $M(\lambda)\twoheadrightarrow{}V$, it suffices to assume $\lambda$ to be dominant and integral; in which case, $J_{\lambda}=\mathcal{I}$ and all the $\mathbb{Z}_{\geq 0}$-cones in \eqref{wt formula for simples} and \eqref{Eqn refined Min. formula via P-PSP} are 0.
   Philosophically, \eqref{Eqn refined Min. formula via P-PSP} re-assures that this classical assumption, 
 		 which helps in understanding integrable $V$ in a first course on representation theory of Lie algebras, is once again all that is needed in determining the weights of general $V$! 
    This is also supported by the fact $J_{\lambda}=\emptyset\implies \wt V = \wt M(\lambda)$ for all $M(\lambda)\twoheadrightarrow V$ (by Lemma \ref{Lemma freeness at module level} or Theorem~\ref{thmB}).
 		\end{remark}
 	A key result needed in the proofs of \eqref{Eqn refined Min. formula via P-PSP} and Theorem \ref{thmB}, is the following lemma, which also generalizes the enumeration result Lemma \ref{Lemma freeness at module level} of Khare to all $V$ mentioned in the introduction: 
\begin{lemma}\label{L4.2}
Let $\mathfrak{g}$ be a Kac--Moody algebra and $\lambda\in\mathfrak{h}^*$, and $M(\lambda)\twoheadrightarrow{ }V$. Then 
\[\hspace*{2cm} \mu-\sum\limits_{i\in J_{\lambda}^c}c_i\alpha_i\ \ \in\ \wt V\qquad\quad \text{ for any } \mu\in \wt_{J_{\lambda}}V \text{ and sequences }(c_i)_{i\in J_{\lambda}^c}\in(\mathbb{Z}_{\geq 0})^{|J_{\lambda}^c|}.  \]
\end{lemma}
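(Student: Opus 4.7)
The plan is to produce an explicit nonzero weight vector of weight $\mu - \sum_{i \in J_\lambda^c} c_i \alpha_i$ in $V$. Since $\lambda - \mu \in \mathbb{Z}_{\geq 0}\Pi_{J_\lambda}$, Lemma \ref{Lemma reaching lambda from mu}(a) lets me pick a nonzero $v \in V_\mu$ of the form $v = F \cdot v_\lambda$ with $F \in U(\mathfrak{n}^-_{J_\lambda})$ a monomial in $f_j$'s for $j \in J_\lambda$ (because $\supp(\lambda - \mu) \subseteq J_\lambda$ forces every simple root appearing in the expansion to lie in $J_\lambda$). Fixing any enumeration $J_\lambda^c = \{i_1, \ldots, i_n\}$ (indices pairwise distinct, so the order is free at this stage), my candidate vector is
\[
u \ := \ F \cdot f_{i_1}^{c_{i_1}} \cdots f_{i_n}^{c_{i_n}} \cdot v_\lambda \ \in \ V_{\mu - \sum_i c_i \alpha_i},
\]
and the whole proof reduces to verifying $u \neq 0$ in $V$.

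For this, I apply the raising operators $e_{i_n}^{c_{i_n}}, e_{i_{n-1}}^{c_{i_{n-1}}}, \ldots, e_{i_1}^{c_{i_1}}$ to $u$ in this specific order, peeling off the innermost $f$ first, and I aim to recover a nonzero multiple of $F v_\lambda$. Two Chevalley-relation facts drive everything. (i) $J_\lambda^c \cap J_\lambda = \emptyset$, so $[e_{i_k}, f_j] = \delta_{i_k, j}\, h_{i_k} = 0$ for every $j \in J_\lambda$; hence each $e_{i_k}$ commutes past $F$. (ii) The indices $i_1, \ldots, i_n$ are pairwise distinct, so $[e_{i_k}, f_{i_l}] = 0$ for $l \neq k$; hence $e_{i_k}$ commutes past every $f_{i_l}^{c_{i_l}}$ with $l \neq k$. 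In the chosen order, the $f$'s that could obstruct $e_{i_k}^{c_{i_k}}$ have already been stripped off, so $e_{i_k}^{c_{i_k}}$ meets its partner $f_{i_k}^{c_{i_k}}$ directly on $v_\lambda$, where the standard $\mathfrak{sl}_{2}$ identity
\[
e_{i}^{c}\, f_{i}^{c}\, v_\lambda \ = \ c!\, \prod_{t=0}^{c-1}\bigl(\lambda(\alpha_i^\vee) - t\bigr)\, v_\lambda
\]
(iterate $[e_i, f_i^c] = c f_i^{c-1}(h_i - c + 1)$ and use $e_i v_\lambda = 0$) produces one scalar factor. Because $i_k \in J_\lambda^c$ gives $\lambda(\alpha_{i_k}^\vee) \notin \mathbb{Z}_{\geq 0}$, each of these scalars is nonzero, and iterating yields
\[
e_{i_1}^{c_{i_1}} \cdots e_{i_n}^{c_{i_n}} \cdot u \ = \ C \cdot F \cdot v_\lambda, \qquad C \in \mathbb{C}^{\times}.
\]
Since $F v_\lambda \neq 0$ in $V$ by construction, $u \neq 0$ in $V$, hence $\mu - \sum_i c_i \alpha_i \in \wt V$.

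The only real subtlety is the peeling order. If instead I applied $e_{i_1}^{c_{i_1}}$ first, it would meet $f_{i_1}^{c_{i_1}}$ above the ``dressed'' vector $w = f_{i_2}^{c_{i_2}} \cdots f_{i_n}^{c_{i_n}} v_\lambda$ and produce the scalar $c_{i_1}! \prod_{t=0}^{c_{i_1}-1}(z - t)$ with $z = \lambda(\alpha_{i_1}^\vee) - \sum_{l>1} c_{i_l}\, A_{i_l, i_1}$; for $\lambda(\alpha_{i_1}^\vee)$ a negative integer, this $z$ could land in $\mathbb{Z}_{\geq 0}$ and kill the vector. Peeling from the innermost $f$ outward sidesteps this entirely, since each partner $f_{i_k}^{c_{i_k}}$ meets $v_\lambda$ before any such dressing can occur. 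Beyond this ordering care, the argument is pure PBW/Chevalley bookkeeping plus the $\mathfrak{sl}_2$ scalar computation, so I anticipate no further genuine obstacle; the harder enumeration issues of Problem \ref{Problem non-vanishing vectors in free-directions for any V} only surface when a direction is used with multiplicity and are handled separately by Algorithm \ref{Algorithm for enumerations}.
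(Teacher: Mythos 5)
Your proposal is correct and follows essentially the same route as the paper's proof of Lemma \ref{L4.2}: write the target vector as $F\prod_{i\in J_\lambda^c}f_i^{c_i}\cdot v_\lambda$ with $F$ supported on $J_\lambda$, commute each $e_{i}^{c_i}$ past $F$ and past the $f_j^{c_j}$ with $j\neq i$, and use that $e_i^{c}f_i^{c}v_\lambda$ is a nonzero multiple of $v_\lambda$ precisely because $\lambda(\alpha_i^\vee)\notin\mathbb{Z}_{\geq 0}$ for $i\in J_\lambda^c$. The only cosmetic difference is that you peel the $e_i^{c_i}$ off one at a time while the paper applies them all at once and regroups, which amounts to the same computation.
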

\begin{proof}[\textnormal{\textbf{Proof}}]
Our approach is similar to that in the proof of Theorem 5.1 in \cite{Dhillon_arXiv}. 
Fix $\mu$ and $c_i$s as in the lemma, and a homogeneous element $F\in U(\mathfrak{n}^-)$ with $0\neq Fv_{\lambda}\in V_{\mu}$.
We show $\big(F\prod_{i\in J_{\lambda}^c}f_i^{c_i}\big)v_{\lambda}\neq 0$, to prove the lemma; 
note, $f_i^{0}=e_i^{0}:=1\in U(\mathfrak{g})$.
Consider $x=\big(\prod_{i\in J_{\lambda}^c}e_i^{c_i}\big)\big(F\prod_{i\in J_{\lambda}^c}f_i^{c_i}\big)v_{\lambda}$;
$e_i$s and $f_is$ are multiplied in the (any) same order. 
$e_i$ and $f_j$ commute $\forall$ $i\neq j\in \mathcal{I}$, so $x=(F\prod_{i\in J_{\lambda}^c}e_i^{c_i}f_i^{c_i})v_{\lambda}$. 
Now each $e_i^{c_i}f_i^{c_i}$ successively act on $v_{\lambda}$ by a non-zero scalar \big(as in \eqref{Eqn Proof of main proposition (b) sl2 calculation}\big) by $\mathfrak{sl}_2$-theory and as $i\in J_{\lambda}^c$.
Thus, $Fv_{\lambda}\neq 0$ implies $x\neq 0$, thereby $(F\prod_{i\in J_{\lambda}}f_i^{c_i}v_{\lambda})\neq 0$ in $V$. 
\end{proof}
 The forward inclusions in our Minkowski decompositions in Theorems \ref{thmA} and \ref{thmC} are easy to see:
 \begin{lemma}\label{Lemma forward inclusion in wt formulas}
     In the above notation, for any $J\subseteq \mathcal{I}$: 
		\[\wt V\ \ \subseteq\ \  \wt _J V-\mathbb{Z}_{\geq 0}(\Delta^+\setminus \Delta_J^+)\ \ =\ \  \wt_J V\ -\ \mathbb{Z}_{\geq 0}\Delta_{J^c,1}.\] 
 \end{lemma}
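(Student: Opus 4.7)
The plan is to handle the equality and the set-inclusion in Lemma~\ref{Lemma forward inclusion in wt formulas} independently. The equality $\wt_J V - \mathbb{Z}_{\geq 0}(\Delta^+ \setminus \Delta_J^+) = \wt_J V - \mathbb{Z}_{\geq 0}\Delta_{J^c,1}$ is immediate from the Parabolic Partial Sum Property (Theorem~\ref{Thm P-Psp}): by P-PSP, $\Delta_{J^c,1}$ generates the monoid $\mathbb{Z}_{\geq 0}(\Delta^+\setminus \Delta_J^+)$; the reverse cone-inclusion $\mathbb{Z}_{\geq 0}\Delta_{J^c,1} \subseteq \mathbb{Z}_{\geq 0}(\Delta^+\setminus\Delta_J^+)$ is trivial from $\Delta_{J^c,1}\subseteq \Delta^+ \setminus\Delta_J^+$, so the two Minkowski-difference sets coincide.

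For the non-trivial set-inclusion $\wt V \subseteq \wt_J V - \mathbb{Z}_{\geq 0}(\Delta^+\setminus\Delta_J^+)$, I would use PBW along the Levi-nilradical splitting $\mathfrak{n}^- = \mathfrak{m}^-_{J^c} \oplus \mathfrak{n}^-_J$, where $\mathfrak{m}^-_{J^c} := \bigoplus_{\alpha \in \Delta^+\setminus\Delta_J^+} \mathfrak{g}_{-\alpha}$ is the opposite nilradical of $\mathfrak{p}_J$. This $\mathfrak{m}^-_{J^c}$ is a Lie subalgebra of $\mathfrak{n}^-$, since the sum of two positive roots each having $J^c$-height $\geq 1$ again has $J^c$-height $\geq 1$, so any such bracket stays inside $\mathfrak{m}^-_{J^c}$. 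PBW then yields $U(\mathfrak{n}^-) = U(\mathfrak{m}^-_{J^c})\cdot U(\mathfrak{n}^-_J)$ as vector spaces. Given $\mu \in \wt V$ and any non-zero $x \in V_\mu$, I write $x = F\, v_\lambda$ with $F \in U(\mathfrak{n}^-)_{-\lambda+\mu}$ and decompose $F = \sum_k F_k G_k$, where each $F_k \in U(\mathfrak{m}^-_{J^c})$ is homogeneous of weight $-\xi_k$ (so $\xi_k \in \mathbb{Z}_{\geq 0}(\Delta^+\setminus\Delta_J^+)$) and each $G_k \in U(\mathfrak{n}^-_J)$ is homogeneous of weight $-\eta_k$ (so $\eta_k \in \mathbb{Z}_{\geq 0}\Pi_J$), with $\xi_k+\eta_k = \lambda - \mu$ for every $k$. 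Since $x = \sum_k F_k(G_k v_\lambda) \neq 0$, some summand $F_{k_0}(G_{k_0}v_\lambda)$ is nonzero; in particular $G_{k_0}v_\lambda \neq 0$, which places $\nu := \lambda - \eta_{k_0}$ in $\wt_J V$ and gives $\mu = \nu - \xi_{k_0}$ as desired.

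No serious obstacle arises: verifying that $\mathfrak{m}^-_{J^c}$ is a Lie subalgebra and tracking weights in the PBW expansion are both routine, consistent with the author's remark that this inclusion is ``easy to see''. For contrast, a more ad hoc induction on $\height_{J^c}(\lambda-\mu)$ using the path guaranteed by Lemma~\ref{Lemma reaching lambda from mu}(b) runs into trouble when the terminal simple-root step from $\mu+\alpha_i$ back to $\mu$ has $i \in J$ rather than $i\in J^c$, because an added $\alpha_i$ with $i\in J$ need not combine with the inductively obtained cone-element into an element of $\mathbb{Z}_{\geq 0}(\Delta^+\setminus\Delta_J^+)$; repairing that case effectively amounts to the PBW re-ordering above, so the PBW argument is the cleanest route.
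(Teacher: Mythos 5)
Your proof is correct and follows essentially the same route as the paper: the paper's one-line proof invokes exactly the PBW factorization $U(\mathfrak{n}^-)=U\big(\bigoplus_{\alpha\in \Delta^-\setminus \Delta_J^-}\mathfrak{g}_{\alpha}\big)\otimes U(\mathfrak{n}^-_J)$ with the $U(\mathfrak{n}^-_J)$-terms on the right, plus Theorem~\ref{Thm P-Psp} for the second equality. Your write-up just fills in the details the paper leaves implicit.
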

\begin{proof}[\textnormal{\textbf{Proof}}]
    The result is easily seen, using a PBW basis of $U(\mathfrak{n}^-)$ with $U(\mathfrak{n}^-_J)$-terms at the right end -- via $U(\mathfrak{n}^-)=U\left(\bigoplus_{\alpha\in \Delta^-\setminus \Delta_J^- }\mathfrak{g}_{\alpha}\right)\ \otimes\ U(\mathfrak{n}^-_J)$ -- and by using Theorem \ref{Thm P-Psp} for the second equality.
 \end{proof}
 Thus, the question of the reverse inclusions to the above is natural, solved below:
	\begin{proof}[\textnormal{\textbf{Proof of Theorem \ref{thmA}}}]
Let $\mathfrak{g}, \lambda$ and $M(\lambda)\twoheadrightarrow{ }V$ be as in the theorem. 
Observe that the reverse implication (for any $J$), in the characterization result \eqref{Eqn A2} recalled below, is obvious.
\begin{equation}\label{Eqn all Mink decomp. characterization equivalence}
\wt_JV\ -\ \mathbb{Z}_{\geq 0}\Pi_{J^c} \ \ \subset\  \wt V\qquad \iff \qquad \wt V\ \ =\ \   \wt_JV\ -\ \mathbb{Z}_{\geq 0}\Delta_{J^c,1}.
\end{equation}
The below lines prove formula~\eqref{Eqn A1} for $J=J_{\lambda}$; simultaneously, the forward implication in \eqref{Eqn A2} can be proved traversing along the same steps.
Formula \eqref{Eqn A1} is manifest in the extreme cases: (i) $J_{\lambda}=\emptyset$, as $\lambda-\mathbb{Z}_{\geq 0}\Pi=\wt L(\lambda)\subset \wt V$; (ii) $J_{\lambda}=\mathcal{I}$, as $\mathbb{Z}_{\geq 0}\Delta_{J^c_{\lambda},1}=\{0\}$.
So we assume throughout $\emptyset\neq J_{\lambda}\subsetneqq \mathcal{I}$.
We show in view of Lemma \ref{Lemma forward inclusion in wt formulas}: $\wt_{J_{\lambda}}V\  -\ \mathbb{Z}_{\geq 0}\big(\Delta^+\setminus \Delta^+_{J_{\lambda}} \big)\ \ \subseteq \wt V$, via proving for any $\gamma_1,\ldots, \gamma_n\ \in\Delta_{J_{\lambda}^c,1}$, $n\in\mathbb{N}$ \ \ \big(for result \eqref{Eqn all Mink decomp. characterization equivalence}, replace $J_{\lambda}$ here by $J$\big) - 
\begin{equation}\label{Eqn proof forward inclusion (A1)}
\hspace*{1.5cm} \wt_{J_{\lambda}}V\ -\ \sum\limits_{t=1}^n\gamma_t\ \ \subset\  \wt V\qquad\quad 
\text{by induction on }\text{ } \height_{J_{\lambda}}\bigg(\sum_{t=1}^n\gamma_t\bigg)\geq 0.\end{equation}
Base step: $\height_{J_{\lambda}}(\sum_{t=1}^n\gamma_t)=0$, and so $\gamma_t\in\Pi_{J_{\lambda}^c}$ $\forall t$. Therefore, the result follows by Lemma \ref{L4.2}.
For the base step for forward implication in \eqref{Eqn all Mink decomp. characterization equivalence} (any $J$), note we already assumed it in the hypothesis.\\
 Induction step: Let $\mu\in\wt_{J_{\lambda}}V$, and $\gamma_1,\ldots,\gamma_n\in\Delta_{J_{\lambda}^c,1}$ such that $\height_{J_{\lambda}}(\sum_{t=1}^n\gamma_t)$ $>0$. The result follows once we prove that $\mu-\sum_{t=1}^n\gamma_t\in\wt V$. Without loss of generality we will assume that $\height_{J_{\lambda}}(\gamma_1)>0$.
 By Lemma \ref{Lemma PSP in unit I-ht roots}, pick $j\in J_{\lambda}$ such that $\gamma_1-\alpha_j\in\Delta_{J_{\lambda}^c,1}$.
Note by the induction hypothesis applied to $\gamma_1-\alpha_j+\sum_{t= 2}^n\gamma_t$ that $\wt_{J_{\lambda}}V-(\gamma_1-\alpha_j+\sum_{t= 2}^n\gamma_t)\subset \wt V$ (when $n=1$, treat the term $\sum_{t=2}^n\gamma_t$ as 0 throughout the proof). Now, if $\mu-\alpha_j\in\wt_{J_{\lambda}} V$, then by the previous sentence $\mu-\sum_{t=1}^n\gamma_t=\mu-\alpha_j-(\gamma_1-\alpha_j+\sum_{t= 2}^n\gamma_t)\in\wt V$, and therefore we are done.\\
So, we assume that $\mu-\alpha_j\notin\wt V$, which forces $f_jV_{\mu}=\{0\}$. 
Thus, the $\mathfrak{g}_{\{j\}}$-module $U(\mathfrak{g}_{\{j\}})V_{\mu}$ is finite-dimensional by $\mathfrak{sl}_2$-theory, and hence it is $\mathfrak{g}_{\{j\}}$-integrable. Define $\Tilde{\mu}$ and $\Tilde{\gamma_t}$ as follows.
    \begin{align*}
    \begin{aligned}
    \Tilde{\mu}=
    \begin{cases}
    \mu&\text{if } \langle\mu,\alpha_j^{\vee}\rangle\geq 0,\\
    s_j\mu &\text{if }\langle\mu,\alpha_j^{\vee}\rangle<0,
    \end{cases}
    \end{aligned}\quad\quad
    \begin{aligned}
    \Tilde{\gamma_1}=
    \begin{cases}
    s_j\gamma_1&\text{if }\langle\gamma_1,\alpha_j^{\vee}\rangle>0,\\
    \gamma_1-\alpha_j&\text{if }\langle\gamma_1,\alpha_j^{\vee}\rangle\leq0,
    \end{cases}
    \end{aligned}\quad\quad
    \begin{aligned}[t]
    &\Tilde{\gamma_t}=
    \begin{cases}
    \gamma_t &\text{if }\langle\gamma_t,\alpha_j^{\vee}\rangle\leq 0,\\
    s_j\gamma_t &\text{if }\langle\gamma_t,\alpha_j^{\vee}\rangle> 0, 
    \end{cases}\\
    &\text{ }\text{when }n\geq 2\text{ and }2\leq t\leq n.
    \end{aligned}
\end{align*}
One verifies the below properties of $\tilde{\mu}$ and $\tilde{\gamma_t}$:\ \ 
(a) $s_j\mu\in\wt V$, by the $\mathfrak{g}_{\{j\}}$-integrability of $U(\mathfrak{g}_{\{j\}})V_{\mu}$, and so $\Tilde{\mu}\in\wt V$. \  \ 
(b) $\Tilde{\gamma_t}\preceq \gamma_t\in\Delta_{J_{\lambda}^c,1}$ $\forall$ $t$, and $\Tilde{\gamma_1}\precneqq\gamma_1$. So, $\height_{J_{\lambda}}(\sum_{t=1}^n\Tilde{\gamma_t})<\height_{J_{\lambda}}(\sum_{t=1}^n\gamma_t)$.
 \ \ (c)~$\langle\Tilde{\mu},\alpha_j^{\vee}\rangle\geq 0$, $\langle\Tilde{\gamma_1},\alpha_j^{\vee}\rangle<0$ and $\langle\Tilde{\gamma_t},\alpha_j^{\vee}\rangle\leq 0$ $\forall$ $t\geq 2$. 
 \   \  (d) $\mu\in [s_j\Tilde{\mu},\ \Tilde{\mu}]$ and $\gamma_t\in[\Tilde{\gamma_t},\ s_j\Tilde{\gamma_t}]$ $\forall$ $t$.

Consider $\Tilde{\mu}-\sum_{t=1}^n\Tilde{\gamma_t}$. In view of points (a) and (b), the induction hypothesis yields $\Tilde{\mu}-\sum_{t=1}^n\tilde{\gamma_t}\in\wt V$. By the $\mathfrak{g}_{\{j\}}$-action on $V_{\Tilde{\mu}-\sum_{t=1}^n\Tilde{\gamma_t}}$, $\mathfrak{sl}_2$-theory and by point (d), Lemma \ref{L3.1} says
\[\mu\ -\ \sum_{t=1}^n\gamma_t\ \ \in\  \left[s_j\bigg(\Tilde{\mu}\ -\ \sum_{t=1}^n\Tilde{\gamma_t}\bigg),\ \text{ }\Tilde{\mu}\ -\ \sum_{t=1}^n\Tilde{\gamma_t}\right] \ \ \subset\ \wt V.\]
Hence the proof of formula \eqref{Eqn A1} \big(also of formula \eqref{Eqn A2}\big) is complete.
\end{proof}
Observe, any Minkowski decomposition as in \eqref{Eqn A1} inside $\wt_{J_{\lambda}}V$, easily extends to whole $\wt V$:
\begin{cor}\label{Remark Mink. decom. inside Jlambda}
Fix $\mathfrak{g}, \lambda\in\mathfrak{h}^*$, $V$ as above, and $J\subseteq J_{\lambda}$.
Suppose in the $\mathfrak{g}_{J_{\lambda}}$-submodule $U(\mathfrak{g}_{J_{\lambda}})v_{\lambda}$,
\begin{equation}\label{Eqn Mink. decom. in Jlambda}
\wt_{J_{\lambda}}V\ \ =\ \  \wt_{J}V\ -\ \mathbb{Z}_{\geq 0}\big(\Delta_{J_{\lambda}}^+\setminus  \Delta_J^+\big).
\end{equation}
By the P-PSP,\  $\Delta_{J^c_{\lambda},1}\subset \mathbb{Z}_{\geq0}\Delta_{J^c,1}$. 
Now observe by formula \eqref{Eqn A1} and by Lemma \ref{Lemma forward inclusion in wt formulas}, equation \eqref{Eqn Mink. decom. in Jlambda} extends to a Minkowski decomposition for whole $\wt V$: 
\[
\wt V\ \ =\ \  \wt_{J}V \ - \ \mathbb{Z}_{\geq 0}\Delta_{J^c,1}\ \ =\ \ \wt_{J}V\ -\ \mathbb{Z}_{\geq 0}\big(\Delta^+\setminus\Delta_J^+\big).
\]
By \eqref{Eqn A2}, this result holds even if we start with the weaker hypothesis $\wt_J V - \mathbb{Z}_{\geq 0}\Pi_{J_{\lambda}\setminus J}\ \subseteq \wt V$. 
\end{cor}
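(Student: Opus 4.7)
The corollary is essentially a two-line consequence of Theorem~\ref{thmA}, so my plan is to record the necessary set-theoretic manipulations explicitly and then invoke the characterization \eqref{Eqn A2} for the weaker-hypothesis version.

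The first step is to apply \eqref{Eqn A1}, which gives $\wt V = \wt_{J_\lambda}V - \mathbb{Z}_{\geq 0}(\Delta^+\setminus \Delta^+_{J_\lambda})$. I would then substitute the hypothesis \eqref{Eqn Mink. decom. in Jlambda} for $\wt_{J_\lambda}V$ and combine the two Minkowski differences, obtaining
\[
\wt V \ =\ \wt_J V \ -\ \mathbb{Z}_{\geq 0}\bigl[(\Delta^+_{J_\lambda}\setminus \Delta^+_J)\,\cup\,(\Delta^+\setminus \Delta^+_{J_\lambda})\bigr].
\]
The key observation to close the argument is that since $J\subseteq J_\lambda$, the two sets inside the brackets partition $\Delta^+\setminus \Delta^+_J$, yielding $\wt V = \wt_J V - \mathbb{Z}_{\geq 0}(\Delta^+\setminus \Delta^+_J)$. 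The second equality $\wt V = \wt_J V - \mathbb{Z}_{\geq 0}\Delta_{J^c,1}$ then follows from the P-PSP (Theorem~\ref{Thm P-Psp}), which identifies $\Delta_{J^c,1}$ as a generating set for the cone $\mathbb{Z}_{\geq 0}(\Delta^+\setminus \Delta^+_J)$; the forward inclusion $\wt V \subseteq \wt_J V - \mathbb{Z}_{\geq 0}\Delta_{J^c,1}$ is already supplied by Lemma~\ref{Lemma forward inclusion in wt formulas}, and the reverse inclusion is immediate from what we have just shown.

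For the second assertion, starting only from $\wt_J V - \mathbb{Z}_{\geq 0}\Pi_{J_\lambda\setminus J}\subseteq \wt V$, the plan is to verify the left-hand side of \eqref{Eqn A2} for this choice of $J$, namely $\wt_J V - \mathbb{Z}_{\geq 0}\Pi_{J^c}\subseteq \wt V$, and then invoke \eqref{Eqn A2} to conclude. To verify this, I would split $\Pi_{J^c} = \Pi_{J_\lambda\setminus J}\sqcup \Pi_{J_\lambda^c}$, so that any element of $\mathbb{Z}_{\geq 0}\Pi_{J^c}$ is written as $\xi_1 + \xi_2$ with $\xi_1\in \mathbb{Z}_{\geq 0}\Pi_{J_\lambda\setminus J}$ and $\xi_2\in \mathbb{Z}_{\geq 0}\Pi_{J_\lambda^c}$. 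For $\mu\in \wt_J V$, the weaker hypothesis gives $\mu - \xi_1 \in \wt V$, and since its support is contained in $J\cup (J_\lambda\setminus J) = J_\lambda$, in fact $\mu - \xi_1 \in \wt_{J_\lambda}V$; then Lemma~\ref{L4.2} yields $\mu - \xi_1 - \xi_2 \in \wt V$, as required.

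No step here presents a real obstacle: the entire argument is a bookkeeping exercise decomposing $\Delta^+\setminus \Delta^+_J$ and $\Pi_{J^c}$ according to whether a root/simple root lies in $J_\lambda$ or not, combined with the three ``engines'' already in hand -- formula~\eqref{Eqn A1}, Lemma~\ref{L4.2}, and the P-PSP. If there is any subtle point worth highlighting in the write-up, it is the containment $\Delta_{J_\lambda^c,1}\subset \mathbb{Z}_{\geq 0}\Delta_{J^c,1}$ noted in the statement, which is an instance of the P-PSP ensuring that the two minimal generating sets are compatible as $J$ varies inside $J_\lambda$; this compatibility is what makes the single minimal-description line at the end valid without further argument.
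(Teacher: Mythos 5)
Your proposal is correct and takes essentially the same route as the paper: apply \eqref{Eqn A1}, substitute the hypothesis \eqref{Eqn Mink. decom. in Jlambda}, combine the two cones via the partition $\Delta^+\setminus\Delta_J^+=(\Delta_{J_\lambda}^+\setminus\Delta_J^+)\sqcup(\Delta^+\setminus\Delta_{J_\lambda}^+)$, invoke Lemma~\ref{Lemma forward inclusion in wt formulas} and the P-PSP for the minimal description, and use \eqref{Eqn A2} (with Lemma~\ref{L4.2} supplying the $\Pi_{J_\lambda^c}$-directions) for the weaker-hypothesis version. The paper leaves this bookkeeping implicit; your write-up just makes it explicit.
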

			\section{Proof of Theorem \ref{thmB}: All highest weight modules $V$ with given weight-sets}\label{S4}
	 Here we prove our second result Theorem \ref{thmB}: Given the weight-set $X$ of a parabolic Verma $M=M(\lambda,J)$ (or $M=M(\lambda)$, $L(\lambda)$), we find all highest weight $\mathfrak{g}$-modules $V$ with $\wt V= X$; 
	 for all Kac--Moody $\mathfrak{g}$ and all $\lambda\in \mathfrak{h}^*$.
  We show the first part \eqref{Theorem B V with full weights} in it, and using this we prove the second part \eqref{Theorem Binterval for PVM wts}; after Observation \ref{O4.1} elaborately discussing about the $\mathfrak{g}$-submodules $N(\lambda, J)\subset M(\lambda)$.
  
 Fix (any) $\lambda\in \mathfrak{h}^*$, and $\emptyset\neq J\subseteq J_{\lambda}$ for the whole section. 
  For $j\in J_{\lambda}$ we define $m_j:=\langle\lambda,\alpha_j^{\vee}\rangle+1\in\mathbb{Z}_{>0}$.
	 Recall the interval notation for weight-strings from \eqref{E2.5}.
	We freely use henceforth the fact: 
\begin{fact} 
If $S,T$ are two weight modules over $\mathfrak{g}$, then $\wt (S + T)\ =\ \wt S\ \cup\ \wt T$.
\end{fact}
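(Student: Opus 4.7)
The plan is to reduce the identity to the elementary statement that $(S+T)_\mu = S_\mu + T_\mu$ for every $\mu \in \mathfrak{h}^*$, and then observe that the claim follows by comparing nonvanishing of weight spaces on both sides.

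First I would note that a sum of weight modules is again a weight module: since $S$ and $T$ decompose as $S = \bigoplus_{\mu} S_\mu$ and $T = \bigoplus_{\nu} T_\nu$, the submodule $S+T$ (taken inside any ambient $\mathfrak{g}$-module, or as external direct sum when that is the intended meaning) is $\mathfrak{h}$-stable and is spanned by its weight vectors, hence decomposes as $\bigoplus_{\mu} (S+T)_\mu$.

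Next, I would prove the pointwise equality $(S+T)_\mu = S_\mu + T_\mu$. The inclusion $\supseteq$ is immediate since $S_\mu, T_\mu \subseteq (S+T)_\mu$. For $\subseteq$, take $x \in (S+T)_\mu$ and write $x = s + t$ with $s \in S$, $t \in T$; decomposing via the weight-space decompositions of $S$ and $T$ gives $s = \sum_{\nu} s_\nu$ and $t = \sum_{\nu} t_\nu$ with $s_\nu \in S_\nu$, $t_\nu \in T_\nu$. Each $s_\nu + t_\nu$ lies in $(S+T)_\nu$, and $x = \sum_{\nu} (s_\nu + t_\nu)$. Because $S+T$ is itself a weight module by the previous paragraph, the decomposition of $x$ into its weight components is unique; matching $\mu$-components forces $x = s_\mu + t_\mu \in S_\mu + T_\mu$ (and $s_\nu + t_\nu = 0$ for all $\nu \neq \mu$).

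Finally I would assemble the conclusion: $\mu \in \wt(S+T) \iff (S+T)_\mu \neq 0 \iff S_\mu + T_\mu \neq 0 \iff S_\mu \neq 0 \text{ or } T_\mu \neq 0 \iff \mu \in \wt S \cup \wt T$. There is no real obstacle here; the only subtle point is making sure the weight-space decomposition of $S+T$ is invoked correctly (so that the ``matching of $\mu$-components'' step is legal), which is precisely the first paragraph's observation. No Kac--Moody input beyond the definition of a weight module is used.
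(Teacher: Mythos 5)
Your proof is correct. The paper states this as an unproven \emph{Fact} (``We freely use henceforth the fact''), so there is no argument in the paper to compare against; your elementary weight-space argument, reducing everything to $(S+T)_\mu = S_\mu + T_\mu$ and uniqueness of the weight decomposition of $S+T$, is exactly the standard justification one would supply.
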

Recall: i)~defining properties \eqref{0property 2} and \eqref{0property 1} of $\mathfrak{g}$-submodules $N(\lambda)$ and resp. $N(\lambda,J)$ of $M(\lambda)$ in Definition \ref{Defn of N(lambda,J)}; and ii)~the corresponding largest and smallest quotients $V_J^{\max}(\lambda)\ =\ M(\lambda, J)$ and $V_J^{\min}(\lambda)\ = \ \frac{M(\lambda)}{N(\lambda,J)}$ in Theorem \ref{thmB}. 
 By the above fact, $N(\lambda,J)$ \big(similarly, $N(\lambda)$\big) exists, and equals the sum of all the $\mathfrak{g}$-submodules of $M(\lambda)$ each of whose weight-sets satisfy the property \eqref{0property 1} \big(resp. \eqref{0property 2}\big), in place of $\wt N(\lambda,J)$ \big(resp. $\wt N(\lambda)$\big). 
 The non-triviality of $N(\lambda,J)$s (if $J\neq \emptyset$) is easily seen by the $\mathfrak{g}$-submodule $\sum\limits_{j\in J}U(\mathfrak{g})f_j^{\langle\lambda,\alpha_j^{\vee}\rangle+1}m_{\lambda}\ \subset N(\lambda,J)$ satisfying \eqref{0property 1}. 
\begin{observation}\label{O4.1}
(1) If the Dynkin diagram on $\mathfrak{g}$ has no edges ($\mathcal{I}$ is independent), $N(\lambda)=\{0\}$.\smallskip\newline
(2) $N(\lambda)\ \subseteq\ N(\lambda,J)\ \forall\ J\subseteq J_{\lambda}$, and moreover $N(\lambda,J)\ \subseteq \ N(\lambda,J')$ for all $J\subseteq J'\subseteq J_{\lambda}$. 
\smallskip\newline
(3) Generalizing Example \ref{Ex non-Verma V with full wts}: For nodes $i\neq j\in J_{\lambda}$ adjacent in the Dynkin diagram, the maximal vector $f_i^{\langle s_j\bullet\lambda,\alpha_i^{\vee}\rangle+1}f_j^{\langle\lambda,\alpha_j^{\vee}\rangle+1}m_{\lambda}\in M(\lambda)$ generates a $\mathfrak{g}$-submodule $M'\ \simeq  M\big((s_is_j)\bullet \lambda)$ in $M(\lambda)$.
Now, $\{i,j\}\subseteq\supp(\lambda-\mu)$ $\forall$ $\mu\in\wt M'$. 
So, for any $\mu\in\wt M'$ the Dynkin subdiagram on $\supp(\lambda-\mu)$ is a non-isolated graph.
Thus, $M'\xhookrightarrow{ } N(\lambda)$, and hence $N(\lambda)$ is non-trivial in this case.\smallskip\newline
(4) Fix a weight $\mu\preceq \lambda$ with $\supp(\lambda-\mu)$ non-independent.
Suppose $x\in M(\lambda)_{\mu}$ is a maximal vector. Then $x\in N(\lambda)$. 
Ex : over $\mathfrak{g}=\mathfrak{sl}_4(\mathbb{C})$ \big(where $\alpha_{i+1}(\alpha_i^{\vee})\neq 0\ \forall\ i$\big), check $f_3^3 f_2^2 f_1 \cdot m_0 \in N(0)$.
\smallskip\newline
(5) For any $V_J^{\max}(\lambda)\twoheadrightarrow V \twoheadrightarrow V_J^{\min}(\lambda)$, integrability $I_V=J$:  $f_j^{\langle\lambda,\alpha_j^{\vee}\rangle+1}m_{\lambda}\in N(\lambda,J)\iff  j\in J$.
\newline\smallskip
(6) $\mathfrak{sl}_2^{\oplus n}$-theory : Fix a proper $\mathfrak{g}$-submodule $0\neq K\not\subset N(\lambda)$ of $M(\lambda)$.
The definition of $N(\lambda)$ yields a weight $\mu=\lambda-\sum_{j\in J}c_j\alpha_j\in\wt K$ with the Dynkin subdiagram on $J=\supp(\lambda-\mu)$ independent. 
As $M(\lambda)_{\mu}$ is 1-dim., $K_{\mu}  = \mathbb{C}\prod\limits_{j\in J}f_j^{c_j}\cdot  m_{\lambda}$.
i) Clearing-off all the $f_j$s supported over $J_{\lambda}^c$ by  $e_js$ as in Lemma \ref{L4.2} shows $0\neq \prod_{j\in J\cap J_{\lambda}}f_j^{c_j}\cdot m_{\lambda}\in K$.
ii) Next by clearing-off $f_j$s from $I=\big\{ j\in J\  \big|\ \height_{\{j\}}(\lambda-\mu)\leq \lambda(\alpha_j^{\vee}) \in \mathbb{Z}_{\geq 0} \big\}\subseteq J_{\lambda}$, using the standard $\mathfrak{sl}_2$-calculation that $e_j^k f_j^n\cdot m_{\lambda} \in \mathbb{Z}_{>0} f_j^{n-k}\cdot m_{\lambda}$ \big(as in \eqref{Eqn Proof of main proposition (b) sl2 calculation}\big) $\forall$ $0\leq k\leq n \leq \lambda(\alpha_j^{\vee})\in \mathbb{Z}_{\geq 0}$, yields $0\neq \prod_{j\in (J\cap J_{\lambda})\setminus I}f_j^{c_j}\cdot m_{\lambda}\in K$.
Hence, for any proper submodule $K$ of $M(\lambda)$ and any $\mu\in \wt K$ with $J=\supp(\lambda-\mu)$ independent :
\begin{equation}\label{Eqn for simplicity of N(lambda, JLambda)}
J\cap J_{\lambda}\neq \emptyset \ \  \big(\text{else by i) }m_{\lambda}\in K\big),\ \ \ \ \text{moreover } \ \  \left\{ j\in J\cap J_{\lambda}
 \ \big|\  \height_{\{j\}}(\lambda-\mu)> \lambda(\alpha_j^{\vee})  \right\}\  \neq\emptyset. 
\end{equation}
iii) Finally for $j\in (J\cap J_{\lambda})\setminus I$, since $c_j\geq \lambda(\alpha_j^{\vee})+1$, we see that the maximal vector $0\neq \prod_{j\in (J\cap J_{\lambda})\setminus I}f_j^{\lambda(\alpha_j^{\vee})+1}\cdot m_{\lambda}$ is in $K$; using $e_j^k f_j^n \cdot m_{\lambda} \in \mathbb{Z}_{<0} f_j^{n-k}\cdot m_{\lambda}$ $\forall$ $n- \lambda(\alpha_j^{\vee})-1\geq k\geq 0$.
Hence,
\[ 
M\bigg(\prod_{j\in (J\cap J_{\lambda})\setminus I}s_j\ \bullet \lambda\bigg) \xhookrightarrow{} K.\qquad \text{So, }\ \  \bigg(\lambda-\ \ \ \ \sum_{\mathclap{j\in (J\cap J_{\lambda})\setminus I}}\ \ \big(\langle\lambda,\alpha_j^{\vee}\rangle+d_j\big)\alpha_j\bigg)\ \notin \wt\left(\frac{M(\lambda)}{K}\right)\quad \forall \ d_j\in\mathbb{Z}_{>0}.
\]
\smallskip\newline
(7) $N(\lambda,J_{\lambda}) =$ the maximal submodule $K=\mathrm{Ker}\big(M(\lambda)\twoheadrightarrow L(\lambda)\big)$ : 
If $K\subseteq N(\lambda)$, this is manifest.
So, assume that $K\not\subset N(\lambda)$.
As $K\subsetneqq M(\lambda)$, we run through Steps i) and ii) in Point (6) above for $K$ with any weight $\mu\in \wt K$ with independent support, which by \eqref{Eqn for simplicity of N(lambda, JLambda)} shows that $K$ satisfies Property $\eqref{0property 1}$ w.r.t. $J=J_{\lambda}$, thereby $K\subseteq N(\lambda, J_{\lambda})$.  
\end{observation}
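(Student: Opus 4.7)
The plan is to handle the seven parts in a dependency-driven order, since (7) rests on (6) and the BGG-embedding idea in (3) resurfaces in (5) and (7). Parts (1) and (2) are pure logic: if $\mathcal{I}$ has no edges every subset of $\mathcal{I}$ is independent, so property \eqref{0property 2} forces $\wt N(\lambda) = \emptyset$; and enlarging $J$ only weakens the conclusion in \eqref{0property 1}, giving both nested inclusions simultaneously once one also identifies property (N0) as the $J = \emptyset$ instance of (NJ) (with the condition on intersection becoming vacuous).

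For (3), I would first verify that for $j \in J_\lambda$ the element $f_j^{\lambda(\alpha_j^\vee)+1} m_\lambda$ generates a BGG-embedded Verma $M(s_j\bullet \lambda) \hookrightarrow M(\lambda)$, then compute $(s_j\bullet\lambda)(\alpha_i^\vee) = \lambda(\alpha_i^\vee) - (\lambda(\alpha_j^\vee)+1)\langle\alpha_j, \alpha_i^\vee\rangle \geq 1$ using adjacency of $i,j$ and $i \in J_\lambda$, and iterate the BGG step to $M(s_i s_j \bullet \lambda) \hookrightarrow M(\lambda)$; every weight of this sub-Verma contains $\{i,j\}$ in its support, which is non-independent. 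Part (4) is an easier variant: $U(\mathfrak{g})x \simeq M(\mu)$ since $x$ is maximal, and every weight $\nu$ of this Verma satisfies $\supp(\lambda-\nu) \supseteq \supp(\lambda-\mu)$, which already contains an edge. For (5), the $\Leftarrow$ direction is the definition of $N(\lambda, J)$; for $\Rightarrow$ (and the claim $I_V = J$), if $j \notin J$ with $f_j^{\lambda(\alpha_j^\vee)+1} m_\lambda \in N(\lambda, J)$, then the weight $s_j\bullet\lambda \in \wt N(\lambda,J)$ has independent singleton support $\{j\}$ disjoint from $J$, contradicting \eqref{0property 1}.

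The main work is (6), the $\mathfrak{sl}_2^{\oplus n}$-reduction. Given proper $K \not\subseteq N(\lambda)$, pick $\mu = \lambda - \sum_{j \in J} c_j \alpha_j \in \wt K$ with $J = \supp(\lambda-\mu)$ independent; as $M(\lambda)_\mu$ is one-dimensional, $K_\mu = \mathbb{C} \prod_{j \in J} f_j^{c_j} m_\lambda$. Independence forces $[f_j, f_{j'}] = [e_j, f_{j'}] = 0$ for distinct $j, j' \in J$ by the Chevalley--Serre relations, so the $\mathfrak{sl}_2$-triples $\langle e_j, \alpha_j^\vee, f_j \rangle_{j \in J}$ act on this monomial independently. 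Step (i) strips off $j \in J \cap J_\lambda^c$ by applying $\prod_{j \in J \cap J_\lambda^c} e_j^{c_j}$, using that in the $\mathfrak{sl}_2$-Verma over $\mathbb{C} m_\lambda$ the operator $e_j^{c_j} f_j^{c_j}$ acts as a nonzero scalar when $\lambda(\alpha_j^\vee) \notin \mathbb{Z}_{\geq 0}$; in particular $J \cap J_\lambda = \emptyset$ would place $m_\lambda \in K$, contradicting properness. Step (ii) strips $j \in J \cap J_\lambda$ with $c_j \leq \lambda(\alpha_j^\vee)$ via the finite-dimensional $\mathfrak{sl}_2$-formula \eqref{Eqn Proof of main proposition (b) sl2 calculation}. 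Step (iii) lowers each remaining $c_j$ (with $j \in (J \cap J_\lambda) \setminus I$ and $c_j \geq \lambda(\alpha_j^\vee)+1$) down to $\lambda(\alpha_j^\vee)+1$, producing a nonzero vector $\prod_{j \in (J \cap J_\lambda) \setminus I} f_j^{\lambda(\alpha_j^\vee)+1} m_\lambda \in K$; maximality holds because $e_k$ commutes with $f_j$ for $k \neq j$ and $e_j$ annihilates $f_j^{\lambda(\alpha_j^\vee)+1} m_\lambda$ by $\mathfrak{sl}_2$-theory, yielding the Verma embedding $M\bigl(\prod s_j \bullet \lambda\bigr) \hookrightarrow K$ and the non-membership in \eqref{Eqn for simplicity of N(lambda, JLambda)}.

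Part (7) then follows quickly: $N(\lambda, J_\lambda)$ is proper because $\mu = \lambda$ has empty (hence independent) support disjoint from $J_\lambda$ and would violate \eqref{0property 1}, so $N(\lambda, J_\lambda) \subseteq K$ by maximality of $K$; conversely, applying the first conclusion of \eqref{Eqn for simplicity of N(lambda, JLambda)} to any $\mu \in \wt K$ with independent support gives $\supp(\lambda-\mu) \cap J_\lambda \neq \emptyset$, so $K$ satisfies (NJ) at $J = J_\lambda$, hence $K \subseteq N(\lambda, J_\lambda)$. The main obstacle throughout is the bookkeeping in Step (iii) of (6): tracking that the combined raising-then-lowering procedure actually yields a nonzero maximal vector with no unwanted cross-terms. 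This is precisely where independence of $\supp(\lambda-\mu)$ is indispensable — it is what decouples the $\mathfrak{sl}_2$-triples and makes the $\mathfrak{sl}_2^{\oplus n}$-calculus clean.
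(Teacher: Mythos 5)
Your proposal is correct and tracks the paper's own inline derivations quite closely: parts (1)–(2) by direct logic on the defining properties, (3) via the BGG embedding $M((s_i s_j)\bullet\lambda)\hookrightarrow M(\lambda)$ with the same dominance calculation $\langle s_j\bullet\lambda,\alpha_i^\vee\rangle\geq 1$, (4) via the Verma generated by a maximal vector, (6) via the same three-stage $\mathfrak{sl}_2^{\oplus n}$ stripping using commutativity of $e_j,f_{j'}$ for $j\neq j'$ in the independent support, and (7) by feeding \eqref{Eqn for simplicity of N(lambda, JLambda)} back into the definition of $N(\lambda,J_\lambda)$. One small piece you leave implicit in (5): after establishing $f_j^{\lambda(\alpha_j^\vee)+1}m_\lambda\in N(\lambda,J)\iff j\in J$, the conclusion $I_V=J$ requires the two-line chase — $J\subseteq I_V$ because $M(\lambda,J)\twoheadrightarrow V$ kills the relevant integrability vectors, and $I_V\subseteq J$ because $V\twoheadrightarrow V_J^{\min}$ pushes any $j\in I_V$-relation into $N(\lambda,J)$, whence $j\in J$ by the equivalence you proved. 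Spelling that out would complete the part, but it is a direct consequence of your $\iff$.
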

We show part \eqref{Theorem B V with full weights} \big(solving Problem \ref{Problem full weights of Vermas}(a), related to Problem \ref{Problem freeness and weight for all V}(b)\big) : All $V$ with full weights.
\begin{proof}[\textnormal{\textbf{Proof of Theorem \ref{thmB} part \eqref{Theorem B V with full weights}:}}] 
Let $\lambda\in\mathfrak{h}^*$ and $0\rightarrow N_V\rightarrow M(\lambda)\rightarrow{ } V\rightarrow0$ be exact, and assume $\wt V=\wt M(\lambda)$. 
Observe, $\mu'\in\wt M(\lambda)\setminus \wt N_V$ whenever $\supp(\lambda-\mu')$ is independent, as $\dim\big(M(\lambda)_{\mu'}\big)=1$ for such $\mu'$. 
By the definition of $N(\lambda)$, this proves that $N_V\subset N(\lambda)$ whenever $\wt V=\wt M(\lambda)$, thereby the forward implication in \eqref{Theorem B V with full weights}. 
Conversely, to show $\wt \left( V_{\emptyset}(\lambda)^{\min}\right)=\wt M(\lambda)$, we prove by induction on $\height(\lambda-\mu)\geq 1$ that $\mu\in\wt M(\lambda)\implies\mu\in \wt \left( V_{\emptyset}(\lambda)^{\min}\right)$.\\
Base step: When $\mu=\lambda-\alpha_t$ for $t\in\mathcal{I}$, as $\{t\}$ is independent, $\mu\notin \wt N(\lambda)$. 
So, $\mu\in \wt \left(V_{\emptyset}^{\min}(\lambda)  \right)$.\\
Induction step: Assume $\height(\lambda-\mu)>1$. Write $\mu=\lambda-\sum\limits_{l\in \supp(\lambda-\mu)}c_l\alpha_l$ for some $c_l\in\mathbb{Z}_{> 0}$. 
If $\supp(\lambda-\mu)\cap J_{\lambda}$ is independent (possibly = $\emptyset$), then by the definition of $N(\lambda)$, clearly
\[
\lambda-\ \ \ \ \ \sum\limits_{\mathclap{l\in \supp(\lambda-\mu)\cap J_{\lambda}}}\ \ \ c_l\alpha_l\ \  \notin \wt N(\lambda)\qquad \implies \qquad \lambda-\ \ \ \ \ \sum\limits_{\mathclap{l\in \supp(\lambda-\mu)\cap J_{\lambda}}}\ \ \ c_l\alpha_l\ \ \in \wt \big(V_{\emptyset}^{\min}(\lambda)\big).
\]
By Lemma \ref{L4.2}, $\mu\in \wt \left( V_{\emptyset}^{\min}(\lambda) \right)$. 
So, we assume hereafter that $\supp(\lambda-\mu)\cap J_{\lambda}$ is non-independent, and let $i,j\in \supp(\lambda-\mu)\cap J_{\lambda}$ be two adjacent nodes in the Dynkin diagram.
Consider $c_i$ and $c_j$, and assume without loss of generality $c_i\geq c_j$. 
As $c_j>0$, $\height(\lambda-\mu-c_j\alpha_j)<\height(\lambda-\mu)$.
So, by the induction hypothesis $\mu+c_j\alpha_j\in \wt \left(V_{\emptyset}^{\min}(\lambda)\right)$.
Now one verifies the following implication.
\[
 \langle\lambda,\alpha_j^{\vee}\rangle\geq 0,\text{ }\langle\alpha_l,\alpha_j^{\vee}\rangle\leq 0\text{ }\forall\text{ }l\in\supp(\lambda-\mu)\setminus\{i,j\}\text{ and }\langle\alpha_i,\alpha_j^{\vee}\rangle\leq -1\implies \langle\mu+c_j\alpha_j,\alpha_j^{\vee}\rangle\geq c_i\geq c_j.\] 
By Lemma \ref{L3.1} \big(for $\mu+c_j\alpha_j$ and $\alpha_j$\big),\  $\mu\in [s_j(\mu+c_j\alpha_j),\  \mu+c_j\alpha_j]\subset \wt\left(V_{\emptyset}^{\min}(\lambda)\right)$, as desired.
\end{proof}
We now show Theorem \ref{thmB}, using (freeness) part \eqref{Theorem B V with full weights} and slice-decomposition \eqref{Int. slice. decomp. PVM}.
\begin{proof}[\textnormal{\textbf{Proof of Theorem \ref{thmB}}}]
Let $\lambda\in\mathfrak{h}^*$. 
We first show the forward implication in the theorem.
For any $M(\lambda)\twoheadrightarrow{ }V'$, it can be easily seen by the definition of $M(\lambda,J)$ and of $I_{V'}$, that 
\[
M(\lambda,J)\twoheadrightarrow{ }V' \ \ \iff \ 
 \ \wt V'\subset \wt M(\lambda,J).
 \]
Next, recall $\wt M(\lambda,J_{\lambda})=\wt L(\lambda)$ by \eqref{wt formula for simples}. 
So $\wt M(\lambda,J_{\lambda})=\wt V'$ for all $M(\lambda,J_{\lambda})\twoheadrightarrow{ }V'$. 
We are done by the above lines if $J=J_{\lambda}$. 
We assume henceforth $J\subsetneq J_{\lambda}$; so $J^c\neq \emptyset$. \\
Fix $V=\frac{M(\lambda)}{N_V}$ with $\wt V=\wt M(\lambda,J)$. 
Then $\mu'\in\wt V$ and $\mu'\notin \wt N_V$ when $\supp(\lambda-\mu')$ is independent in $J^c$; as $\dim(M(\lambda))_{\mu'}=1$ and $\lambda-\mathbb{Z}_{\geq 0}\Pi_{J^c}\subset\wt M(\lambda,J)$ by $J^c$-freeness \eqref{Eqn Mink diff PVM}.
Now by property \eqref{0property 1}, $N_V\subset N(\lambda,J)$ when $\wt V=\wt M(\lambda,J)$, proving the forward implication.

Conversely, we begin by noting by the $\mathfrak{g}_J$-integrability of $V_J^{\min}(\lambda)\ = \  \frac{M(\lambda)}{N(\lambda,J)}$, we have $V_J^{\max}(\lambda)= M(\lambda,J)\twoheadrightarrow V_J^{\min}(\lambda)$.
So, $\wt \big(V_J^{\min}(\lambda)\big)\subseteq \wt M(\lambda,J)$. 
We now show that every slice $\wt L_J(\lambda-\xi)$ in \eqref{Int. slice. decomp. PVM} is contianed in $\wt \big(V_J^{\min}(\lambda)\big)$. 
This shows $\wt \left(V_J^{\min}(\lambda)\right)=\wt M(\lambda,J)$, which immediately proves the reverse implication of \eqref{Theorem Binterval for PVM wts}, since $M(\lambda,J)\twoheadrightarrow{ }V$ when $J\subseteq I_V$. 
For this, we consider $\left[\wt \left(V_J^{\min}(\lambda)\right)\right]\cap(\lambda-\mathbb{Z}_{\geq 0}\Pi_{J^c})$, and the $\mathfrak{g}_{J^c}$-module $N(\lambda,J)\cap  U(\mathfrak{g}_{J^c})m_{\lambda}$.

Observe, $U(\mathfrak{g}_{J^c})m_{\lambda}$ is isomorphic to the Verma $\mathfrak{g}_{J^c}$-submodule with highest weight $\lambda$ (or $\lambda\big|_{\mathfrak{h}_{J^c}^*}$). 
Now analogous to $N(\lambda)$, let $N_{J^c}(\lambda)$ be the largest $\mathfrak{g}_{J^c}$-submodule of $U(\mathfrak{g}_{J^c})m_{\lambda}$ w.r.t. \eqref{0property 2}. 
By part \eqref{Theorem B V with full weights} (for $\mathfrak{g}_{J^c}$), we have $\wt\left[\frac{\left(U(\mathfrak{g}_{J^c})m_{\lambda}\right)}{\left(N_{J^c}(\lambda)\right)}\right]\ =\ \lambda-\mathbb{Z}_{\geq0}\Pi_{J^c}$. 
Now, note by the definition of $N(\lambda,J)$ that if $\mu\in\big(\wt N(\lambda,J)\big)\cap (\lambda-\mathbb{Z}_{\geq 0}\Pi_{J^c})$, 
then $\supp(\lambda-\mu)$ is necessarily non-independent.
So, the $\mathfrak{g}_{J^c}$-submodule $N(\lambda,J)\cap U(\mathfrak{g}_{J^c})m_{\lambda}$ of $U(\mathfrak{g}_{J^c})m_{\lambda}$ satisfies \eqref{0property 2} (over $\mathfrak{g}_{J^c}$). So, \begin{equation}\label{E4.2}
N(\lambda,J)\ \cap\  U(\mathfrak{g}_{J^c})m_{\lambda}\ \ \subseteq\ N_{J^c}(\lambda),\quad \text{ so that }\ \ N(\lambda,J)_{\bar{\mu}}\ \subseteq\  N_{J^c}(\lambda)_{\bar{\mu}}\text{ }\ \ \forall\text{ }\bar{\mu}\in\lambda-\mathbb{Z}_{\geq 0}\Pi_{J^c}.\end{equation}
Recall for any submodule $K\subseteq M(\lambda)$ and $\mu''\in\wt M(\lambda)$, that $\Big(\frac{M(\lambda)}{K}\Big)_{\mu''}$ and $\frac{M(\lambda)_{\mu''}}{K_{\mu''}}$ are isomorphic as vector spaces; as $K$ is a weight module. 
So, by the inclusions \eqref{E4.2} observe for $\bar{\mu}\in\lambda-\mathbb{Z}_{\geq 0}\Pi_{J^c}$:  
\begin{equation*}
\left[\frac{\left(U(\mathfrak{g}_{J^c})m_{\lambda}\right)}{N_{J^c}(\lambda)}\right]_{\bar{\mu}}\ \simeq\  \frac{\big( U(\mathfrak{g}_{J^c})m_{\lambda}\big)_{\bar{\mu}}}{N_{J^c}(\lambda)_{\bar{\mu}}}\ \neq\{0\}\ \ 
\implies\ \  \left(V_J^{\min}(\lambda)\right)_{\bar{\mu}}\ \simeq\  \left[\frac{\left(U(\mathfrak{g}_{J^c})m_{\lambda}\right)}{N(\lambda,J)\cap U(\mathfrak{g}_{J^c})m_{\lambda}}\right]_{\bar{\mu}}\ \neq \{0\}.
\end{equation*}
Hence, $\wt\left( V_J^{\min}(\lambda) \right)\supseteq\lambda-\mathbb{Z}_{\geq 0}\Pi_{J^c}$. 
Now for $\xi\in\mathbb{Z}_{\geq0}\Pi_{J^c}$,  every $0\neq x\in \left(V_J^{\min}(\lambda)\right)_{\lambda-\xi}$ is a maximal vector for $\mathfrak{g}_J$. 
So, $U(\mathfrak{g}_J)x$ is a highest weight $\mathfrak{g}_J$-module with highest weight $\lambda\big|_{\mathfrak{h}_J}$.
Therefore,
\[\wt\left(V_J^{\min}(\lambda)\right)\ \ \supseteq\ \  \wt\left[U(\mathfrak{g}_J)\cdot \left(V_J^{\min}(\lambda)\right)_{\lambda-\xi}\right]\ \ \supset\ \  \wt L_J(\lambda-\xi),\qquad \text{as }U(\mathfrak{g}_J)x\twoheadrightarrow L_J(\lambda-\xi),
\]
for all $\xi\in\mathbb{Z}_{\geq0}\Pi_{J^c}$. 
So, $\wt\left(V_J^{\min}(\lambda)\right)\ \supseteq\ \bigsqcup_{\xi\in\mathbb{Z}_{\geq 0}\Pi_{J^c}}\wt L_J(\lambda-\xi)$, completing the proof.
\end{proof}
\section{Proof of Theorem \ref{thmC}: All Minkowski difference weight-formulas for every $V$}\label{Section 5}
We prove Theorem \ref{thmC}, that yields all possible Minkowski weight-decompositions w.r.t. $J\subseteq \mathcal{I}$ for all $M(\lambda)\twoheadrightarrow V$, via the {\it smallest} set $J^{\min}=J_V$ (Definition \ref{Defn of support of all holes in Section 5}) among such $J$s offering the finest weight-formula in those decompositions.
It strengthens \eqref{Eqn A1}, solving Problem \ref{Problem freeness and weight for all V}(c) recalled here: 
\begin{problem}
    Given $M(\lambda)\twoheadrightarrow V$, for which $J\subseteq \mathcal{I}$ (in view of Corollary \ref{Remark Mink. decom. inside Jlambda}, precisely which $J\subseteq J_{\lambda}$),
    \begin{equation}\label{Eqn Min. decomp. for all V in Section 5}
    \wt V \ \ =\  \ \wt_JV\ - \ \mathbb{Z}_{\geq 0}\left(\Delta^+\setminus \Delta^+_J\right)\ ?
    \end{equation}
    \end{problem}
    To solve this problem for every $V$, one needs to look at deeper relations (Ex : those in the below prototypical example), than the integrability relations $f_i^{\lambda(\alpha_i^{\vee})+1}v_{\lambda} =0$ occurring in $V$ for $i\in I_V$; recall, $I_V$-relations determine such decompositions for hulls $\conv_{\mathbb{R}}(\wt V)$ and their faces.
    \begin{example}\label{Ex type A1xA1}
        Let $\mathfrak{g} \ =\ \mathfrak{sl}_2(\mathbb{C})^{\oplus 2}$ (type $A_1\times A_1$) and $\mathcal{I}=\{1,2\}$. Consider the $\mathfrak{g}$-module
        \[
        V\ = \ \frac{M(0)}{U(\mathfrak{g})\cdot f_1 f_2\cdot m_0 }.
        \]
        It can be easily verified \big(as $U(\mathfrak{n}^-)$ is the polynomial algebra $\mathbb{C}[f_1,  f_2]$\big) that $\wt V\  =\  \big[0  -\mathbb{Z}_{\geq 0}\alpha_1   \big]\ \cup\  \big[ 0- \mathbb{Z}_{\geq 0}\alpha_2 \big]$.
        We have freeness in each direction $J=\{1\}$ and $J=\{2\}$ just from the top weight 0, but not the general property  \eqref{Eqn freeness in PVMs} we began with.
        Thus, $V$ cannot admit the decompositions \eqref{Eqn Min. decomp. for all V in Section 5} for any $J\subsetneqq \{1,2\}$ evidently or by \eqref{Eqn all Mink decomp. characterization equivalence}; and for $J=\{1,2\}$ the decomposition is manifest.
           \end{example}
     Observe such obstacles to decompositions w.r.t. $J$ for $V$ \underline{can arise}, via the higher length relations:
        \[
        I\text{ independent in }\mathcal{I} \ \ \  \text{ with }\ \ \bigg(\prod_{i\in I}f_i^{\lambda(\alpha_i^{\vee})+1}\bigg)\cdot v_{\lambda}\ = \ 0\quad \text{and}\quad I\cap J^c\neq \emptyset.
        \]
    Now accounting for all such obstacles leads us to the node-set $J_V$ \big(introduced in Definition \ref{Defn support of all holes of V} earlier\big) Definition \ref{Defn of support of all holes in Section 5} below; similar to $I_V$ being the ``union of all the unit length relations''.
    By the above observation, identifying $J_V$ simplifies the problem of finding $J$s that support Minkowski decompositions.
    Here is a useful fact:
    \begin{itemize}
 \item $M(\lambda)_{w\bullet \lambda}$, for $w=\ s_{i_1}\cdots s_{i_n}\text{(reduced)\ }\in W_{J_{\lambda}}$ and $i_1,\ldots , i_n\in J_{\lambda}$, has a maximal vector:
 \begin{equation}\label{Eqn maximal vector example}
     f_{i_n}^{\langle s_{i_{n-1}}\cdots s_{i_1}\bullet \lambda,\ \alpha_{i_n}^{\vee}\rangle +1} \cdots f_{i_{t+1}}^{\langle s_{i_t}\cdots s_{i_1}\bullet \lambda,\ \alpha_{i_{t+1}}^{\vee}\rangle +1}\cdots f_{i_1}^{\langle \lambda,\ \alpha_{i_1}^{\vee}\rangle+1 } \cdot m_{\lambda}\ \neq 0. 
 \end{equation}
\end{itemize}
    \begin{definition}\label{Defn of support of all holes in Section 5}
          For $\mathfrak{g}$-module $M(\lambda)\twoheadrightarrow V$, $J_V$ is the {\bf union }of $I\ \subseteq \ J_{\lambda}$ with: 
        \begin{itemize}
            \item[(1)] $I$ is independent; so, $\dim M_{\mu} =1$ $\forall$ $\mu\in \lambda-\mathbb{Z}_{\geq 0}\Pi_I$.\\
            We focus on independent $I$, as the quotients of $M(\lambda)$ along non-independent directions do not loose weights \big(as in Example \ref{Ex non-Verma V with full wts}\big) by Theorem \ref{thmB}\eqref{Theorem B V with full weights}.     
            \item[(2)] $\bigg(\prod\limits_{i\in I}s_i\bigg)\bullet \lambda\ = \  
 \lambda\ -\ \sum\limits_{i\in I}\big( \lambda( \alpha_i^{\vee})+1\big) \alpha_i \ \ \boldsymbol{\notin \wt V}$.\\
 Its weight space in $M(\lambda)$ is maximal and 1-dim., and is contained in  $\text{kernel}\big(M(\lambda)\twoheadrightarrow V\big)$. 
            \item[(3)]  We only need minimal sets $I$ having the property in condition (2) above: \ \ So, \\  $\bigg(\prod\limits_{k\in K}s_k\bigg)\bullet \lambda\ = \ \lambda-\sum\limits_{k\in K}\big(\lambda( \alpha_k^{\vee})+1\big) \alpha_k\ \boldsymbol{\in \wt V}$\quad $\forall$ $K\subsetneqq I$. 
    \end{itemize}
    \end{definition}\begin{example}\label{Ex of JV}
                (1) \underline{Special cases of $J_V$}: Over any Kac--Moody $\mathfrak{g}$, and for any $\lambda\in \mathfrak{h}^*$ and $J\subseteq J_{\lambda}$-
                \[
                J_{M(\lambda)}=\emptyset,\qquad \qquad                  J_{L(\lambda)}= J_{\lambda},\qquad\qquad   J_{M(\lambda,J)}=J.
 \]
  (2) For $V$ in Examples \ref{Ex non-Verma V with full wts} and \ref{Ex type A1xA1}, $J_V \ =\ \emptyset \text{ and resp. } \ J_V=\mathcal{I}$. Below is a slightly bigger example.
 (3) Let $\mathfrak{g}=\mathfrak{sl}_5(\mathbb{C})$, $\mathcal{I}=\{1,2,3,4\}$ (with successive nodes adjacent in the Dynkin diagram), and
                \[   
                V \ = \ \frac{M(\varpi_2)}{ U(\mathfrak{n}^-)\cdot \big\{\ f_1f_3 m_{\varpi_2},\ \  f_2^2 m_{\varpi_2},\ \ f_3^2f_4 m_{\varpi_2}, \ \ f_2^2f_4 m_{\varpi_2}  \ \big\}  },\]
               \ \  \  \ where $\varpi_2$ is the $2^{\text{nd}}$-fundamental weight.\qquad   Then $J_V\ =\ \{1,3\}\cup \{2\}\ = \ \{1,2,3\}$.
            \end{example}
            \begin{lemma}
                Fix any $\mathfrak{g}$-module $M(\lambda)\twoheadrightarrow V$ ($\neq 0$).
                $J_V=\emptyset\ \iff\ \wt V =\wt M(\lambda)$.         
                For $\mu\preceq \lambda$ with $\supp(\lambda-\mu)$ independent, $V_{\mu}=0$  implies $\Big(\prod_{i\in I}s_i\Big)\bullet\lambda
                $-weight space in $V$ is 0 for some $I\subseteq \supp(\lambda-\mu)$; following Observation \ref{O4.1}(6).
                \end{lemma}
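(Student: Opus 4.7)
The whole lemma unfolds from the $e_i$-clearing procedure of Observation~\ref{O4.1}(6); the two assertions are two different packagings of that procedure. The plan is to first prove the second (module-level) statement as a direct translation of the three-step clearing, and then use it, combined with Theorem~\ref{thmB}\eqref{Theorem B V with full weights} and the $J_\lambda^c$-freeness from Lemma~\ref{L4.2}, to derive the equivalence $J_V=\emptyset\iff\wt V=\wt M(\lambda)$.

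For the second assertion, write $J=\supp(\lambda-\mu)$ and $\mu=\lambda-\sum_{j\in J}c_j\alpha_j$ with $c_j>0$. Since $J$ is independent, $M(\lambda)_\mu$ is one-dimensional, spanned by $\prod_{j\in J}f_j^{c_j}m_\lambda$, and $V_\mu=0$ means this monomial lies in the kernel $K:=\ker\bigl(M(\lambda)\twoheadrightarrow V\bigr)$. Now apply the three steps of Observation~\ref{O4.1}(6): (i) remove the $f_j$'s for $j\in J\cap J_\lambda^c$ using powers of the commuting $e_j$'s, as in Lemma~\ref{L4.2}; (ii) remove the $f_j$'s with $j\in J\cap J_\lambda$ and $c_j\leq\lambda(\alpha_j^\vee)$ via the standard $\mathfrak{sl}_2$-calculation, where $e_j^{c_j}f_j^{c_j}$ acts by a nonzero scalar; (iii) for the remaining indices $I:=\{j\in J\cap J_\lambda:c_j>\lambda(\alpha_j^\vee)\}$, bring each exponent $c_j$ down to $\lambda(\alpha_j^\vee)+1$, again by $\mathfrak{sl}_2$. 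The outcome is the maximal vector $\prod_{i\in I}f_i^{\lambda(\alpha_i^\vee)+1}m_\lambda$ lying in $K$, whose weight is precisely $\bigl(\prod_{i\in I}s_i\bigr)\bullet\lambda$. Nonemptiness of $I$ is forced by \eqref{Eqn for simplicity of N(lambda, JLambda)} (otherwise $m_\lambda\in K$, contradicting $V\neq 0$), and $I\subseteq J\cap J_\lambda\subseteq\supp(\lambda-\mu)$ is immediate.

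For the first assertion, the direction $\wt V=\wt M(\lambda)\Rightarrow J_V=\emptyset$ is immediate from Definition~\ref{Defn of support of all holes in Section 5}, since every weight $\lambda-\sum_{i\in I}(\lambda(\alpha_i^\vee)+1)\alpha_i$ then lies in $\wt V$, so condition~(2) is never satisfied. Conversely, assume $J_V=\emptyset$. By Theorem~\ref{thmB}\eqref{Theorem B V with full weights} it suffices to verify $\lambda-\mathbb{Z}_{\geq 0}\Pi_I\subseteq\wt V$ for every independent $I\subseteq\mathcal{I}$, and Lemma~\ref{L4.2} reduces this to the case $I\subseteq J_\lambda$ (a missing weight supported in $I$ forces a missing weight with the same coefficients supported in $I\cap J_\lambda$). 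A supposed missing weight $\lambda-\sum_{i\in I}c_i\alpha_i\notin\wt V$ then feeds into the second assertion to produce a nonempty independent $I'\subseteq I\subseteq J_\lambda$ with $\bigl(\prod_{i\in I'}s_i\bigr)\bullet\lambda\notin\wt V$. Choosing any minimal subset $I''\subseteq I'$ still having this property, $I''$ satisfies all three conditions of Definition~\ref{Defn of support of all holes in Section 5} and hence lies in $J_V$, contradicting $J_V=\emptyset$. The only delicate point to double-check in writing up the proof is that minimality of $I''$ within $I'$ coincides with minimality among all independent subsets of $J_\lambda$, because condition~(2) is intrinsic to the subset; modulo that bookkeeping, the whole argument is a clean rereading of Observation~\ref{O4.1}(6).
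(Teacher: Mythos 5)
Your proposal is correct and follows exactly the route the paper intends: the paper gives no separate proof of this lemma, instead pointing to Observation~\ref{O4.1}(6) (the three-step $e_i$-clearing, which is precisely your argument for the second assertion) and to Theorem~\ref{thmB}\eqref{Theorem B V with full weights} plus Lemma~\ref{L4.2} for the equivalence $J_V=\emptyset\iff\wt V=\wt M(\lambda)$. Your write-up supplies the details (nonemptiness of $I$ via \eqref{Eqn for simplicity of N(lambda, JLambda)}, the minimality bookkeeping for Definition~\ref{Defn of support of all holes in Section 5}) accurately.
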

              Important in Theorem \ref{thmC} in addition to $J_V$, is the below application of ``$\mathfrak{sl}_3$-theory'': 
\begin{lemma}\label{Lemma Serre}
Fix a Kac--Moody $\mathfrak{g}$, number $N\in \mathbb{Z}_{\geq 0}$, node $i\in \mathcal{I}$, and an independent subset $J\subseteq \mathcal{I}$. 
Then for any integers $c_j> -N\langle\alpha_i,\alpha_j^{\vee}\rangle\geq 0$ $\forall$ $j\in J$, we have:
\[
\Bigg(\prod_{j\in J}f_j^{c_j}\Bigg) f_i^N \ \in \ U(\mathfrak{n}^-)\Bigg(\prod_{j\in J}f_j^{c_j+N\langle\alpha_i,\alpha_j^{\vee}\rangle}\Bigg). 
\]
\end{lemma}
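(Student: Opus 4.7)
The plan is a nested induction: first on $N$ to reduce to the case $N=1$, then process the blocks $f_{j_l}^{c_{j_l}}$ one at a time using iterated Serre relations. As a preliminary observation, if $i\in J$ then $\langle\alpha_i,\alpha_i^\vee\rangle=2$ forces the hypothesis $c_i > -2N\geq 0$ to yield $N=0$, which is trivial; so I may assume $i\notin J$, and set $m_j := -\langle\alpha_i,\alpha_j^\vee\rangle \geq 0$ for $j\in J$. For the induction on $N$, applying the hypothesis at level $N-1$ gives $\bigl(\prod_j f_j^{c_j}\bigr) f_i^{N-1} = Y \prod_j f_j^{c_j - (N-1)m_j}$ for some $Y\in U(\mathfrak{n}^-)$ (valid since $c_j > Nm_j$ implies $c_j > (N-1)m_j$); right-multiplying by $f_i$ and writing $d_j := c_j - (N-1)m_j$ (which satisfies $d_j > m_j$), it suffices to prove the $N=1$ statement: whenever $d_j > m_j$ for all $j\in J$, one has $\bigl(\prod_j f_j^{d_j}\bigr) f_i \in U(\mathfrak{n}^-)\, \prod_j f_j^{d_j - m_j}$.

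The $N=1$ case rests on the elementary identity in any associative algebra
\[
f^{a}\, y \ = \ \sum_{r \geq 0} \binom{a}{r} (\ad f)^{r}(y)\, f^{a-r},
\]
in which the sum truncates once $(\ad f)^{r_0}(y)=0$. Fix an enumeration $J = \{j_1,\ldots,j_p\}$ and apply this identity to push $f_i$ leftward, one block $f_{j_l}^{d_{j_l}}$ at a time, in the order $l = p, p-1, \ldots, 1$. The key structural input is that $J$ is independent, so $[f_{j_l}, f_{j_{l'}}] = 0$ for distinct $l,l'$; consequently $[\ad f_{j_l},\, \ad f_{j_{l'}}] = \ad[f_{j_l}, f_{j_{l'}}] = 0$ as operators on $U(\mathfrak{g})$. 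At each stage the current ``inner'' element is an iterated adjoint $(\ad f_{j_{l+1}})^{k_{l+1}}\cdots (\ad f_{j_p})^{k_p}(f_i)$, and the Serre relation $(\ad f_{j_l})^{m_{j_l}+1}(f_i) = 0$ together with the commutativity of the $\ad f_{j_l}$'s yields $(\ad f_{j_l})^{m_{j_l}+1}$ annihilating this inner element (commute the fresh adjoint past the previously-applied ones, then invoke Serre on $f_i$). Iterating through all $p$ blocks gives
\[
\Bigl(\prod_{l=1}^p f_{j_l}^{d_{j_l}}\Bigr) f_i \ = \ \sum_{\substack{(k_l)_{l=1}^p \\ 0\leq k_l \leq m_{j_l}}}\ \Bigl(\prod_l \binom{d_{j_l}}{k_l}\Bigr) \Bigl[\prod_{l=1}^p (\ad f_{j_l})^{k_l}\Bigr](f_i) \cdot \prod_{l=1}^p f_{j_l}^{d_{j_l}-k_l},
\]
where both the iterated adjoint and the residual tail products are order-independent by the mutual commutativity of $\{f_{j_l}\}_{l\in J}$.

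To finish, since $k_l\leq m_{j_l}\leq d_{j_l}$, I split $f_{j_l}^{d_{j_l}-k_l} = f_{j_l}^{m_{j_l}-k_l}\cdot f_{j_l}^{d_{j_l}-m_{j_l}}$ and slide each $f_{j_l}^{m_{j_l}-k_l}$ leftward using mutual commutativity in $J$, absorbing it into the $U(\mathfrak{n}^-)$-coefficient. The rightmost factor of every summand is then $\prod_l f_{j_l}^{d_{j_l}-m_{j_l}}$, exhibiting the required containment. The main (essentially only) obstacle is the bookkeeping in the iterated push: one must confirm that at each stage the appropriate Serre vanishing still holds, and that the growing tail of $f_{j_l}$-factors on the right can be consolidated. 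Both are resolved cleanly by the independence of $J$, which makes the $\ad f_{j_l}$'s pairwise commute and reduces the argument to rank-two Serre data in each pair $(i, j_l)$ glued by commuting simple-root vectors.
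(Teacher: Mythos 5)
Your proof is correct, and its core is the same as the paper's: the commutator expansion $f^{a}y=\sum_{r}\binom{a}{r}(\ad f)^{r}(y)\,f^{a-r}$ iterated over the mutually commuting $f_j$ ($j\in J$), truncated by the Serre relations $(\ad f_j)^{-\langle\alpha_i,\alpha_j^{\vee}\rangle+1}(f_i)=0$, with the independence of $J$ used exactly as you use it (pairwise commuting $\ad f_j$'s, consolidation of the residual tail). The one genuine difference is how the power $f_i^N$ is handled: the paper applies the expansion directly to $y=f_i^N$ and must then argue (via the derivation property of $\ad f_j$ on $U(\mathfrak{n}^-)$ and $\mathfrak{sl}_2$-theory) that $\ad_{f_n}^{t_n}\circ\cdots\circ\ad_{f_1}^{t_1}(f_i^N)$ vanishes precisely when some $t_j>-N\langle\alpha_i,\alpha_j^{\vee}\rangle$, whereas you peel off one factor of $f_i$ at a time by an outer induction on $N$, so that only the genuine rank-two Serre relation on a single $f_i$ is ever invoked. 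Your reduction is slightly cleaner in that it avoids the Leibniz-distribution analysis of $\ad^{t}(f_i^N)$; the paper's version buys an explicit closed-form identity \eqref{ESerre2} for all $N$ at once, which it reuses elsewhere (e.g.\ in Step 2 of the proof of Theorem \ref{Corollary local Weyl group invariance 2}(b)). Your preliminary reduction to $i\notin J$ and the bookkeeping $d_j=c_j-(N-1)m_j>m_j$ in the induction step are both verified correctly.
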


\begin{proof}[\textnormal{\textbf{Proof}}]
We enumerate for convenience $J=\{1,\ldots, n\}$ for $n\in \mathbb{N}$, and let $i$ be as in the lemma.
The proof of the lemma follows upon showing the below identity for any $b_1, \ldots, b_n\in \mathbb{Z}_{\geq 0}$:
\begin{equation}\label{ESerre2}
\left(f_n^{b_n}\cdots f_1^{b_1} \right) f_i^N
 \ =\  \sum_{t_n= 0}^{b_n}\cdots\sum_{t_1=0}^{b_1}
\left[\binom{b_n}{t_n}
\cdots \binom{b_1}{t_1}\right]\ \left[ \text{ad}_{f_n}^{\circ t_n}\circ \cdots \circ \ \text{ad}_{f_1}^{\circ t_1} \big(f_i^N\big)\right] \bigg[\prod_{j=1}^nf_j^{b_j-t_j} \bigg]. 
\end{equation}
\big(\eqref{ESerre2} holds in any associative algebra.\big) 
We work with extended operators $\text{ad}_{f_t} : U(\mathfrak{g}) \rightarrow U(\mathfrak{g})$; note $f_t\otimes f_a\otimes f_b = \ad_{f_t}\big(f_a\otimes f_b\big) + f_a\otimes f_b\otimes f_t = [f_t,f_a]\otimes f_b + f_a\otimes [f_t, f_b] + f_a\otimes f_b\otimes f_t$.
Above $\ad_{f_1},\ldots,\ad_{f_n}$ commute.
One can prove \eqref{ESerre2} by inducting on $\sum_{j=1}^n b_j\geq 0$, using the derivation action of $\ad_{f_j}$ on $U(\mathfrak{n}^-)$ and additivity of the binomial coefficients in the Pascal's triangle.

Observe by applying formula \eqref{ESerre2} for $b_j=c_j> -N\langle\alpha_i,\alpha_j^{\vee}\rangle$, by the Serre-relations $\text{ad}_{f_j}^{-\langle\alpha_i,\alpha_j^{\vee}\rangle+1}$ $f_i=0$ and by $\mathfrak{sl}_2$-theory (raising by $e_i$s) and as $f_1,\ldots, f_n$ commute: 
\[
 \text{ad}_{f_n}^{t_n}\circ \cdots \circ \ \text{ad}_{f_1}^{t_1} \big(f_i^N\big)  \ =\ 0 \quad \text{ if and only if }\ t_j>-N\langle\alpha_i,\alpha_j^{\vee}\rangle \ \text{for some }j\in \{1,\ldots, n\}.
\]
\big(For $N=1$ and $t_j\leq -\alpha_i(\alpha_{t_j}^{\vee})$, this vector is a root vector in the \big($\alpha_i+\sum_{t=1}^n t_j\alpha_j$\big)-th root space in $\mathfrak{g}$.\big) 
So, $c_j-t_j\geq c_j+N\langle\alpha_i,\alpha_j^{\vee}\rangle$ in all the non-zero summands in \eqref{ESerre2} for $b_j=c_j$ $\forall$ $j$, as desired. 
\end{proof}                  
We are now ready to show Theorem \ref{thmC} for general $V$: 1) working in several steps; 2) repeated applications of Lemmas \ref{L3.1} and \ref{Lemma Serre}; 3) the core of the proof - going down from weights of the ``top simple $L(\lambda)$'', to those of the lower simple sub-quotients $L(w\bullet \lambda)$ for $w\in W$ having independent supports; 4) the freeness property in formula \ref{wt formula for simples} for these simples. 
\begin{proof}[\textnormal{\textbf{Proof of Theorem }\ref{thmC}}]
   Observe, weight-formula \eqref{Minkowski formula for supp of holes} easily 
 implies weight-formulas \eqref{All Minknowski formulas} $\forall$ $J\supseteq J_V$, by using: (i) any PBW basis of $U(\mathfrak{n}^-_J)$ having $U(\mathfrak{n}^-_{J_V})$-terms always at the right end -- via $U(\mathfrak{n}^-_J)=U\left(\bigoplus_{\alpha\in \Delta^-_J\setminus \Delta_{J_V}^- }\mathfrak{g}_{\alpha}\right)\ \otimes\ U(\mathfrak{n}^-_{J_V})$ -- so that $\wt_{J}V\  \subseteq\  \wt_{J_V} V -\mathbb{Z}_{\geq 0}(\Delta^+_J\setminus\Delta^+_{J_V})$; and (ii)~$\Delta^+\setminus \Delta^+_J \subseteq \Delta^+\setminus \Delta^+_{J_V}$.
 So it suffices to show formula \eqref{Minkowski formula for supp of holes}.\bigskip

\noindent
 {\bf Proof of Theorem \ref{thmC}\eqref{Minkowski formula for supp of holes}}.
Let $\mathfrak{g}, \lambda, V$ be as in the theorem.
We show in view of Theorem \ref{thmA}\eqref{Eqn A2}:
 \begin{equation}\label{Eqn Freeness inclusion with JV in Theorem C proof}
 \wt_{J_V} V \ - \  \mathbb{Z}_{\geq 0}\Pi_{\mathcal{I}\setminus J_V}\ \ \subseteq \ \ \wt V.
 \end{equation}
 \[
 \text{I.e., for any weight }\ \mu \ = \  \lambda\ -\ \sum\limits_{\ \ \mathclap{t\in \supp(\lambda-\mu)}}\ \ c_t\alpha_t\ \in  \wt_{J_V}V \ \ \   \& \ \ \text{ root-sum }\ x\ = \ \sum\limits_{\ \ \mathclap{t\in \supp(x)}}\ \ c_t\alpha_t\ \in \mathbb{Z}_{\geq 0}\Pi_{J_V^c}, 
 \]
 \vspace*{-3mm}
\[
\hspace*{2cm} \text{we prove } \ \mu - x \in\  \wt V,\qquad 
\text{by induction on}\quad \height(\lambda-\mu +x)\geq 0.
\]
(Note, $\supp(\lambda-\mu)\cap \supp(x)=\emptyset$.)\ \ 
In the base step $\mu=\lambda$ and $x=0$, the result is manifest.\\
\underline{Induction step for $\mu-x\precneqq \lambda$} {\bf :}
Observe, $\mu-x\in \wt V$: i) when $x=0$ trivially.
Next ii) when $\mu=\lambda$ by Theorem \ref{thmB}\eqref{Theorem B V with full weights}, since all the 1-dim. weight spaces along $J_V^c$-directions in $M(\lambda)$ \big(or precisely in the $\mathfrak{g}_{J_V^c}$-Verma $U(\mathfrak{n}^-_{J_V^c})\cdot m_{\lambda}$\big) survive in $V$.
Finally iii) when $\supp(x)\cap J_{\lambda}^c\neq \emptyset$ by Lemma \ref{L4.2}, since $\supp(\lambda-\mu)\subseteq J_V\subseteq J_{\lambda}$ and as $\mu \ - \  \ \sum\limits_{\ \ \ \ \mathclap{t\in \supp(x)\cap J_{\lambda}}}\  c_t\alpha_t$ by the induction hypothesis.\\
We assume henceforth:\   $\mu\precneqq \lambda,\   x\neq 0, \   \supp(x)\subseteq J_{\lambda}\cap J_V^c$. 
We work in many steps and cases below.
\medskip \\
\underline{Step 1}: Here we show our claim assuming $\supp(\lambda-\mu)$ to be independent; proceeding as in the proof of part \eqref{Theorem B V with full weights}.
Suppose $\supp(\lambda-\mu+x)$ is independent and $V_{\mu-x}=0$.
Let $y$ be a minimal weight with $\lambda \succeq y\succeq \mu-x$ s.t. $V_y=0$. 
By the raising trick in the proof of Lemma \ref{L4.2}, see that $\supp(\lambda-y)\subseteq J_{\lambda}$. 
Now the definitions of $J_V \supseteq \supp(\lambda-\mu)$ and $\supp(x)\subseteq J_{\lambda}\cap J_V^c$ together lead to $y\succeq \mu$, which contradicts $V_{\mu}\neq 0$.
So, we assume the Dynkin subdiagram on $\supp(\lambda-\mu+x)$ to have at least one edge for the reminder of this step.
Say it is between nodes $a,b\in \supp(\lambda-\mu+x)$, and without loss of generality let $c_b\geq c_a$ (both are positive).
Note, either $a\in \supp(\lambda-\mu)$, or $a\in \supp(x)$.
Nevertheless, $V_{\mu}\ =\ \mathbb{C}\prod\limits_{\ \ \ \  \mathclap{t\in \supp(\lambda-\mu)}}\ f_t^{c_t}\ \  v_{\lambda}\neq 0 \implies\ \ \ \prod\limits_{\ \ \mathclap{a\neq t\in \supp(\lambda-\mu)}}\ f_t^{c_t}\ \ v_{\lambda}\neq 0$ \big(as $f_t$s commute $\forall$ $t\in \supp(\lambda-\mu)$\big), and so $\mu\preceq \lambda-\ \  \ \sum\limits_{\ \ \ \mathclap{a\neq t\in \supp(\lambda-\mu)}}\ \  c_t\alpha_t\ \in \wt V$.
Next,\ \ \ $x\succeq\sum\limits_{\ \ \ \  \mathclap{a\neq t\in \supp(x)}}\ c_t\alpha_t\in \mathbb{Z}_{\geq 0}\Pi_{J_V^c}$, and moreover
\[
\mu-x+c_a\alpha_a= \lambda-\ \ \  \sum\limits_{\ \ \ \  \mathclap{a\neq t\in \supp(\lambda-\mu)}}\ c_t\alpha_t \ \  - \  \ \ \sum\limits_{\ \ \ \mathclap{a\neq t\in \supp(x)}}\ c_t\alpha_t\in \wt V
\qquad \text{by the induction hypothesis.}
\] 
Finally, Lemma \ref{L3.1} (for $\mu-x+c_a\alpha_a$ and $\alpha_a$) and the below inequalities yield $\mu-x\in \wt V$:  
\[
\langle \mu-x+c_a\alpha_a,\ \alpha_a^{\vee}\rangle \geq c_b\langle-\alpha_b,\alpha_a^{\vee}\rangle\geq c_a >0 \qquad \text{as }\ \langle\lambda,\alpha_a^{\vee}\rangle\geq 0\ \text{ and as }\   a\notin \supp(\lambda-\mu+x-c_a\alpha_a).  
\]
This completes the proof for Step 1.
Henceforth, we assume that $\supp(\lambda-\mu)$ is not independent. 
Our \underline{goal} in the below steps is to find (for the given weight $\mu\in \wt_{J_V}V$):
\begin{equation}\label{Eqn strategy in Theorem C proof}
\text{ a node }\ i \ \text{ and \ \  weight }\ \widehat{\mu}\in \wt_{J_V}V\  \ \text{ with }\ 
\mu \precneqq \widehat{\mu},\ \ \ \langle\widehat{\mu},\alpha_i^{\vee}\rangle >0\ \ \   \text{and}\ \ \ \mu\ \in \big[s_i\widehat{\mu},\ \widehat{\mu}\big].  
\end{equation}
Then the induction hypothesis and Lemma \ref{L3.1} yield $\mu-x\ \in \big[s_i(\widehat{\mu}-x),\ (\widehat{\mu}-x)\big]\ \subseteq \wt V$.
\begin{remark}
The below lines in the proof moreover show Proposition \ref{Corollary local Weyl group invariance 1} in Section \ref{Section 6},
which studies the actions of the generators of $\mathfrak{g}$ on the weight spaces of $V$, needed in Proposition \ref{Enumeration proposition- freeness at module level}.
\end{remark}
A key result in the below steps is Lemma \ref{Lemma local composition series for V} \big(or its original version \cite[Lemma 9.6]{Kac} for $J\neq \emptyset$ there-in\big), which yields for $V $ in category $\mathcal{O}$ and $\mu\in \wt V$, a weight $\lambda'\in \wt V$ with $\mu\in \wt L(\lambda') \subseteq \wt V$.
As one anticipates, and as we will see below, it suffices to work with $\lambda'$ in the linkage class $W\bullet \{\lambda\}$.
Recall  
$\lambda'$ is dominant integral along $J_{\lambda'}$-directions, $L(\lambda')$ is $\mathfrak{g}_{J_{\lambda'}}$-integrable, and 
\begin{equation}\label{Simple wt formula for lambda'}
\wt L(\lambda')\ \ = \ \ \wt L_{J_{\lambda'}}^{\max}(\lambda')-  \mathbb{Z}_{\geq 0}\Big(\Delta^+\setminus \Delta_{J_{\lambda'}}^+\Big)\ \ =\qquad \bigsqcup_{\mathclap{\xi\ \in\ \mathbb{Z}_{\geq 0}\Pi_{\mathcal{I}\setminus J_{\lambda'}}}}\ \ \wt L_{J_{\lambda'}}^{\max}(\lambda' -  \xi ). 
\end{equation}
\underline{Step 2}: We write the proof when $\mu\in \wt L(\lambda') \subseteq \wt V$ and $\height_{J_{\lambda'}}(\lambda'-\mu)>0$ \big(includes the case $\mu\in \wt L(\lambda)$\big). 
Note, $\mu$ is not in the free part $\lambda'-\mathbb{Z}_{\geq 0}\Pi_{\mathcal{I}\setminus J_{\lambda'}}$ of $\wt L(\lambda')$.
So, by formula \eqref{Simple wt formula for lambda'} and Lemmas \ref{Lemma reaching lambda from mu} and \ref{Lemma PSP in unit I-ht roots}, fix a node $j'\in J_{\lambda'}$ with $\mu+\alpha_{j'}\in \wt L(\lambda')$.
As $L(\lambda')$ is $\mathfrak{g}_{\{j'\}}$-integrable, by $s_j$-conjugating $\mu$ or $\mu+\alpha_{j'}$ if needed, we get a weight $\mu'$ in $\alpha_{j'}$-string $\big(\mu\ +\ \mathbb{Z}\alpha_{j'}\big)\ \cap\ \wt L(\lambda')$ with
\[
s_{j'}\mu' \ \precneqq \ \mu'\ \ 
  \big(\text{that is, }\langle\mu'\alpha_{j'}^{\vee}\rangle>0\big)\quad \text{ and }\quad \mu\  \in \ \big[ s_{j'}\mu', \ \mu' \big] \subseteq \wt L(\lambda').
\]
Now $\mu'-x\in \wt V$ by the induction hypothesis.
Checking $\langle x, \alpha_{j'}^{\vee}\rangle \in \mathbb{Z}_{\leq 0}$, and moreover $\langle\mu'-x,\alpha_{j'}^{\vee}\rangle\geq \langle\mu',\alpha_{j'}^{\vee}\rangle>0$ (since $j'\in \supp(\lambda-\mu) \subseteq J_V$ and $\supp(x)\subseteq J_V^c$) and applying Lemma \ref{L3.1} yields:
\[
\mu-x\ \in \ \big[s_{j'}\mu',\ \mu'
 \big]-x\ \subseteq \ \big[s_{j'}(\mu'-x),\ \mu'-x
 \big] \  \subseteq  \ \wt V.
\] 
We assume henceforth the below implication \big(Step 3 (1)--(4) note its important consequences\big): 
 \begin{equation*}
 \tag{\text{C1}}\label{Condition (C1)}
 \lambda'\preceq \lambda \ \  \text{ and }\ \ 
 \mu \in \wt L(\lambda') \subseteq  \wt V\ \ \implies \ \  \lambda'-\mu\in \mathbb{Z}_{\geq 0}\Pi_{\mathcal{I}\setminus J_{\lambda'}}.
 \end{equation*}
  \underline{Step 3}: Assumption \eqref{Condition (C1)} particularly prohibits $\mu\in \wt L(\lambda)$, as $\emptyset\neq \supp(\lambda-\mu)\subseteq J_V\subseteq J_{\lambda}$.
  So by 
  \[
  \wt L(\lambda)\ =\ \wt \left(M(\lambda,J_{\lambda})\ = \  \frac{M(\lambda)}{\sum\limits_{j\in J_{\lambda}}U(\mathfrak{n}^-)f_j^{\lambda\big(\alpha_j^{\vee}\big)+1}\cdot m_{\lambda}}\right),
  \]
  we have for some $i_1\in J_{\lambda}$: i) $v_1 :=  f_{i_1}^{\lambda\big(\alpha_{i_1}^{\vee}\big)+1}v_{\lambda}\neq 0$ is a maximal vector generating highest weight $\mathfrak{g}$-submodule $V_1:=U(\mathfrak{n}^-)v_1 \subset V$; 
  ii)~$\mu\in \wt V_1$.
  Note, $i_1\in J_V$ as $\supp(\lambda-\mu)\subseteq J_V$; also $V_1\twoheadrightarrow L\big(s_{i_1}\bullet \lambda\big)$.
  Now, either $\mu\in \wt L\big(s_{i_1}\bullet \lambda\big)$, or $\mu\notin \wt L\big(s_{i_1}\bullet \lambda\big)$.
  When $\mu\notin \wt L(s_{i_1}\bullet \lambda)$, in turn 
  \[
 \wt M\left( s_{i_1}\bullet \lambda, \ J_{s_{i_1}\bullet \lambda} \right) \ =\ \wt L\left(s_{i_1}\bullet \lambda\right)\ = \ \wt \left( \frac{V_1}{\sum\limits_{j\in J_{s_{i_1}\bullet\lambda}} \ U(\mathfrak{n}^-) f_j^{s_{i_1}\bullet\lambda\big(\alpha_j^{\vee}\big)+1}v_1} \right),
  \]
  yields $i_2\in J_{s_{i_1}\bullet \lambda}\cap J_V$ (similar to $i_1$) s.t.: i) $v_2\ := \ f_{i_2}^{s_{i_1}\bullet\lambda\big(\alpha_{i_2}^{\vee}\big)+1}v_1\neq 0$ is a maximal vector in $V_1$, and it generates a highest weight $\mathfrak{g}$-submodule $V_2:=U(\mathfrak{n}^-)v_2$ of $V_1$; ii) $\mu\in \wt V_2$.
  Again, either $\mu\in \wt L(s_{i_2}s_{i_1}\bullet \lambda)$, or $\mu\notin \wt L(s_{i_2}s_{i_1}\bullet \lambda)$.
  We repeat the above procedure iteratively and obtain say in $k$-steps a sequence of nodes $i_k,\ldots,i_1\in J_V$ -- satisfying properties similar to i) and ii), which are written down elaborately in conditions (1)--(4) below -- such that $\mu\notin \wt L(\lambda)\cup \cdots \cup \wt L\big(s_{i_{k-1}}\cdots s_{i_1}\bullet \lambda\big)$ but $\mu\in \wt L\big(s_{i_k}\cdots s_{i_1}\bullet \lambda\big)$; such a stage `$k$' must exist as $\height(\lambda-\mu)<\infty$.
  More precisely, we obtain (not necessarily distinct) $i_k, i_{k-1}, \ldots, i_1\in J_V\subseteq J_{\lambda}$ with the properties: 
  \begin{itemize}
      \item[(1)] Powers $p_{r+1}:= \big\langle(s_{i_r}\cdots s_{i_1})\bullet\lambda,\  \alpha_{i_{r+1}}^{\vee}\big\rangle+1 \ \geq 1$ $\forall$ $r\in \{0,\ldots, k-1\}$; with $s_{i_0}=1\in W$.
      So, 
      \[
      s_{i_k}\cdots s_{i_1}\bullet\lambda\  \precneqq\  \cdots\  \precneqq\  s_{i_{r+1}} \cdots  s_{i_1}\bullet\lambda\  \precneqq s_{i_r}\cdots s_{i_1}\bullet\lambda\ \precneqq\  \cdots\  \precneqq\
       \lambda.
      \]
      \item[(2)]  $\left(f_{i_r}^{p_r}\cdots f_{i_1}^{p_1}\right)v_{\lambda}\neq 0$ \big(as in \eqref{Eqn maximal vector example}\big) is a maximal vector in $V_{s_{i_r}\cdots s_{i_1}\bullet \lambda}$ for each $r$ and powers $p_1,\ldots,p_r$.
      So, $V_r:=U(\mathfrak{n}^-)\left(f_{i_r}^{p_r}\cdots f_{i_1}^{p_1}\right)v_{\lambda}$ is a highest weight $\mathfrak{g}$-submodule of $V$, with highest weight $\big(s_{i_r}\cdots s_{i_1}\big)\bullet \lambda$.
      Also, $V_{r+1}$ is a highest weight $\mathfrak{g}$-submodule of $V_r$.
      \item[(3)] For each $r$, we have $V_{\mu}\supseteq (V_r)_{\mu}\neq\emptyset$, i.e., $\mu\in \wt V_r$. 
      Importantly, 
      \[
      \mu\notin \wt L\big(s_{i_r}\cdots s_{i_1}\bullet\lambda\big) \ \ \text{ for any}\ \ r\ <\ k,\qquad \text{ but }\ \mu\ \in \ \wt L\big(s_{i_k}\cdots s_{i_1}\bullet \lambda\big).
      \]
      \item[(4)] We set for convenience $\lambda':=s_{i_k}\cdots s_{i_1}\bullet\lambda$. Now condition (C1) forces $\lambda'-\mu\in \mathbb{Z}_{\geq 0}\Pi_{\mathcal{I}\setminus J_{\lambda'}}$.
      \end{itemize}\smallskip  
        \underline{Step 4}: In the notations in Step 3(1)--(4), here we show the result -- i.e., $\mu-x\in \wt V$  for the (fixed) root-sum $0\neq x=\sum c_t\alpha_t\in \mathbb{Z}_{\geq 0}\Pi_{\mathcal{I}\setminus J_V}$ -- in the smallest case when $k=2$ and $\mu=s_{i_2}s_{i_1}\bullet \lambda$.
        Note $i_1\neq i_2$, and moreover $\langle\alpha_{i_1},\alpha_{i_2}^{\vee}\rangle\langle\alpha_{i_2},\alpha_{i_1}^{\vee}\rangle\neq 0$ since $\supp(\lambda-\mu)$ is not independent.
        Our proof in this case provides the strategy for the general case.
        We begin by noting in $\mu$,        
        \[        c_{i_1}=\langle\lambda,\alpha_{i_1}^{\vee}\rangle+1>0\quad \text{ and }\quad c_{i_2}=\langle\lambda,\alpha_{i_2}^{\vee}\rangle-c_{i_1}\langle\alpha_{i_1},\alpha_{i_2}^{\vee}\rangle+1 \ >\ c_{i_1}. 
        \]
         $c_{i_2}+c_{i_1}\langle\alpha_{i_1},\alpha_{i_2}^{\vee}\rangle\geq \langle\lambda,\alpha_{i_2}^{\vee}\rangle+1$ implies by Lemma \ref{Lemma Serre} for $(J, i)=(\{i_2\},i_1)$, that $M(\lambda)_{\mu}$ (so $V_{\mu}$) is spanned by the vectors $F\cdot f_{i_2}^{\lambda(\alpha_{i_2}^{\vee})+1} m_{\lambda}$ for $F\in U(\mathfrak{n}^-)$.
        So, by $\mathfrak{sl}_2$-theory, $f_{i_2}^N v_{\lambda}\neq 0$ \big(so $\lambda-N\alpha_{i_2}\in \wt V$\big) $\forall N\in \mathbb{Z}_{\geq 0}$.
        By this and the induction hypothesis $\mu+c_{i_1}\alpha_{i_1}=\lambda-c_{i_2}\alpha_{i_2}-x\in \wt V$. 
        By the below implication using Lemma \ref{L3.1}, we set $\widehat{\mu}\ = \ \lambda-c_{i_2}\alpha_{i_2}$ for \eqref{Eqn strategy in Theorem C proof}: 
        \[
        0<c_{i_1}\leq \langle\lambda,\alpha_{i_1}^{\vee}\rangle-c_{i_1}\langle\alpha_{i_2},\alpha_{i_1}^{\vee}\rangle-  \langle x,\alpha_{i_1}^{\vee}\rangle \ \ \implies \ \  \mu-x\in \big[s_{i_1}(\lambda-c_{i_2}\alpha_{i_2}-x),\ \lambda-c_{i_2}\alpha_{i_2}-x\big]\subseteq \wt V.
        \]
        The below steps show the result for every $k\geq 2$.
        By Step 3(4), we define for  $\lambda'=(s_{i_k}\cdots s_{i_1})\bullet\lambda$ - 
        \[
        I\ :=\ \{i_t\ |\  1\leq t\leq k\}\ =\ \supp(\lambda-\lambda')\quad \text{ and }\quad \mu=\lambda'-\sum\limits_{j\in J_{\lambda'}^c\cap J_V}d_j\alpha_j\ \ \text{ for }\ \ d_j\in \mathbb{Z}_{\geq 0}.
        \]
        Observe, $\{j\in J_{\lambda'}^c\cap J_V \ |\ d_j>0  \}\subseteq I$, as $\langle\lambda',\alpha_j^{\vee}\rangle\in \mathbb{Z}_{\geq 0}$ if $j\in 
        J_V\setminus I\subseteq J_{\lambda}$.
        So, $I=\supp(\lambda-\mu)$ (and is not independent).
        Let $q\in \{1,\ldots, k\}$ be the smallest for which $\{i_{q+1},\ldots, i_k \}$ is independent, and $I':=\{i_{q+1},\ldots, i_k \}$.
        Recall $\{i_k\}$ is independent, and so $q\leq k-1$. 
         By definitions, $i_q$ has at least one edge to a node, say $i_{q+1}$ for simplicity, in $I'$.
         Moreover, observe by its independence $I'\subseteq J_{\lambda'}^c \cap J_V$. 
        We employ the same strategy of Step 4, using $i_q$ and $i_{q+1}$, via comparing $d_{i_q}$ and $d_{i_{q+1}}$.
        We proceed in two steps below, applying Theorem \ref{thmA}:
        \medskip\\
        \underline{Step 5}: 
        We assume $d_{i_{q+1}}< d_{i_q}$, and work inside $V_q$ (in Step 6 as well).
        By Step 3(2), $\Big( f_{i_k}^{p_k}\cdots f_{i_{q+1}}^{p_{q+1}}\Big) f_{i_q}^{p_q}$ $\Big( f_{i_{q-1}}^{p_{q-1}}\cdots f_{i_1}^{p_1}\Big)v_{\lambda}\neq 0\in V_{\lambda'}$.
        As $I'$ is independent, $0\neq \bigg(\prod\limits_{i_{q+1}\neq i_t\in I'}f_{i_t}^{p_t}\bigg) f_{i_q}^{p_q}$ $\Big( f_{i_{q-1}}^{p_{q-1}}\cdots f_{i_1}^{p_1}\Big)v_{\lambda}$ is a maximal vector generating a highest weight $\mathfrak{g}$-submodule of $V_q$ with highest weight $\lambda'+p_{q+1}\alpha_{i_{q+1}}$.
        So, $\wt L\big(\lambda'+p_{q+1}\alpha_{i_{q+1}}\big)\subseteq \wt V$. 
        For $j\in J_{\lambda'}^c\cap J_V\subseteq J_{\lambda}$, observe that $\lambda'(\alpha_j^{\vee})\in \mathbb{Z}_{<0}$, and so $\langle \lambda'+p_{q+1}\alpha_{i_{q+1}},\ \alpha_j^{\vee}\rangle <0$ \big(particularly for $j$ s.t. $d_j>0$\big).
        So, by formula \eqref{Simple wt formula for lambda'} for $\wt L(\lambda'+p_{q+1}\alpha_{i_{q+1}})$: 
        \[
        \gamma\ :=\ (\lambda'+p_{q+1}\alpha_{i_{q+1}}) -\qquad \sum\limits_{\mathclap{j\ \in\  J_{\lambda'}^c\cap J_V\ \setminus \{ i_q, i_{q+1} \}}}\ \  d_j\alpha_j\ \ \in \wt L\big(\lambda'+p_{q+1}\alpha_{i_{p+1}}\big)\ \subseteq\  \wt V_q \ \subset \ \wt V.
        \]
       Now $i_q\notin \supp(\gamma\ -\ s_{i_q}\cdot\cdot s_{i_1}\bullet\lambda)$, moreover, $i_q\in J_{s_{i_q}\cdot\cdot s_{i_1}\bullet \lambda}^c$. So by Theorem \ref{thmA} for $M\big(s_{i_q}\cdot \cdot  s_{i_1}\bullet 
 \lambda\big) \twoheadrightarrow V_q$: 
        \[
     \widehat{\mu}\ =\qquad   \gamma-d_{i_q}\alpha_{i_q}\ = \ \mu+ (p_{q+1}+d_{i_{q+1}})\alpha_{i_{q+1}} \  \in\  \wt V_q\ \subset \ \wt V.\hspace*{2cm}
        \]
        The induction hypothesis says $\gamma-d_{i_q}\alpha_{i_q}-x\in \wt V$.
        Observe: i) $p_{q+1} = \langle s_{i_q}\cdots s_{i_1}\bullet \lambda, \alpha_{i_{q+1}}^{\vee}\rangle +1 = \langle \lambda'+p_{q+1}\alpha_{i_{q+1}},\ \alpha_{i_{q+1}}^{\vee}\rangle+1$ (as $I'$ is independent); and ii) by this and assumption $d_{i_{q+1}}< d_{i_q}$,  $0\leq p_{q+1}+d_{i_{q+1}}\leq \langle \gamma - d_{i_q}\alpha_{i_q},\ \alpha_{i_{q+1}}^{\vee} \rangle\leq \langle \gamma - d_{i_q}\alpha_{i_q}-x,\ \alpha_{i_{q+1}}^{\vee} \rangle$.
        Thus, by Lemma \ref{L3.1}, $\mu-x=\gamma-d_{i_q}\alpha_{i_q}  -\big(p_{q+1}+d_{i_{q+1}}\big)\alpha_{i_{q+1}}-x\in \big[ s_{i_{q+1}}\big(\gamma-d_{i_q}\alpha_{i_q}-x\big),\  \gamma-d_{i_q}\alpha_{i_q}-x \big]\ \subset \ \wt V$, as desired.
        \medskip
        \\
        \underline{Step 6}:  In this final step, $d_{i_q}\leq d_{i_{q+1}}$.
       In Step 5, node $i_{q+1}$ was important. Now its analogue is   
         \[\hspace*{2.5cm} I'':=\big\{i_t\in I'\ |\  p_t+p_{q}\langle\alpha_{i_q},\alpha_{i_t}^{\vee}\rangle\in \mathbb{Z}_{> 0} \big\}\ \ \subseteq\  I'.\qquad (I''\text{ can be }\emptyset.)
         \]
    Check by the independence of $I''$: 
    i) $p_t+p_{q}\big\langle\alpha_{i_q},\alpha_{i_t}^{\vee}\big\rangle= \big\langle s_{i_{q-1}}\cdot\cdot\
    s_{i_1}\bullet \lambda,\ \alpha_{i_t}^{\vee}\big\rangle+1$ $\forall$ $i_t\in I''$ \big(so~$I''$ $\subseteq J_{s_{i_{q-1}}\cdots s_{i_1}\bullet \lambda}$\big); 
     ii) as in Step 5, $ \bigg(\prod\limits_{i_t\in I''} f_{i_t}^{p_t}\bigg) f_{i_q}^{p_q}\big( f_{i_{q-1}}^{p_{q-1}}\cdot\cdot f_{i_1}^{p_1}\big)v_{\lambda}\neq 0$; iii) by ii) and by Lemma \ref{Lemma Serre} and $\mathfrak{sl}_2$-theory, we get a triple $(\lambda'' ,v, V')$ defined below, with $v$ a maximal vector generating the highest weight $\mathfrak{g}$-submodule $V'$ (the candidate for $V_q$ in Step 5) of $V$, with highest weight $\lambda''$-
    \begin{align}\label{Eqn triple step 6 }
    \begin{aligned}[c]
    \lambda'' \ :=\  s_{i_{q-1}}\cdots s_{i_1}\bullet \lambda\  -\ \sum_{i_t\in I''}\big[ p_t+ p_q\big\langle \alpha_{i_q}, \alpha_{i_t}^{\vee}\big\rangle\big]\alpha_{i_t} \quad \in\ \lambda' +\mathbb{Z}_{\geq 0}\Pi_{I'\sqcup \{i_q\}},\hspace*{1.5cm}\\
      v:=\Bigg(\prod\limits_{i_t\in I''} f_{i_t}^{p_t+p_{q}\big\langle\alpha_{i_q},\alpha_{i_t}^{\vee}\big\rangle} \Bigg) \Big( f_{i_{q-1}}^{p_{q-1}}\cdots f_{i_1}^{p_1}\Big)v_{\lambda}\ \neq 0\ \in\  V_{\lambda''}, \qquad  \  M(\lambda'')\twoheadrightarrow\  V':= U(\mathfrak{n}^-)v.
    \end{aligned}
    \end{align}
 Observe by the definitions of $I''$, $\lambda''$, and the independence of $I'$: i) $I''\subseteq J_{\lambda''}^c$; ii)~$\big\langle s_{i_{q-1}}\cdots s_{i_1}\bullet\lambda, \alpha_{i_t}^{\vee}\big\rangle\ = \ p_t + p_q\alpha_{i_q}\big(\alpha_{i_t}^{\vee}\big)-1 \ <\ 0$ $\forall$ $i_t\in I'\setminus I''\subseteq J_{\lambda}$, and so $I'\setminus 
 I''\subseteq  J_{\lambda''}^c$; 
and iii) $J_{\lambda'}^c\setminus (I'\sqcup\{i_q\})\subseteq J_{\lambda''}^c$. 
 So, by \eqref{Eqn triple step 6 } and Theorem \ref{thmA} for $V'$:
\[
 \gamma' \ := \ \lambda''+ \sum_{i_t\in I''} \big[p_q\big\langle\alpha_{i_q},\ \alpha_{i_t}^{\vee}\big\rangle -d_{i_t}\big]\alpha_{i_t} - \sum_{\ \ \mathclap{i_t\in I'\setminus I''}}\ \big(p_t+d_{i_t}\big)\alpha_{i_t}-  \ \ \sum_{\quad \mathclap{\quad j\in J_{\lambda'}^c\setminus (I'\sqcup \{i_q\})}}\ \ d_j\alpha_j \quad \in   \wt V'\  \subset\  \wt V.
 \]
 \big(Above, $d_{j}=0$ whenever $j\notin J_{\lambda'}^c\cap J_V$.\big) 
  Notice, $\gamma'=\mu+ (p_q+d_{i_q})\alpha_{i_q}$. 
  Finally, we check that-
  \begin{align*} 
  \langle\gamma', \alpha_{i_q}^{\vee}\rangle&\  \geq  \Big\langle\lambda'' +\sum_{\mathclap{i_t\in I''}}\big(p_q\langle\alpha_{i_q}, \alpha_{i_t}^{\vee}\rangle-d_{i_t}\big)\alpha_{i_t}- \sum_{\mathclap{i_t\in I'\setminus I''}}\ \big(p_t+d_{i_t}\big)\alpha_{i_t},\  \alpha_{i_q}^{\vee}\Big\rangle \\ &\ = \big\langle s_{i_{q-1}}\cdots s_{i_1}\bullet \lambda,\ \alpha_{i_q}^{\vee}\big\rangle - \big(\ \underbrace{p_{q+1}}_{>0}+\underbrace{d_{i_{q+1}}}_{\geq d_{i_q}}\ \big)\underbrace{\big\langle \alpha_{i_{q+1}}, \alpha_{i_q}^{\vee}\big\rangle}_{<0}  -\ \  \sum_{\ \ \mathclap{i_{q+1}\neq i_t\in I'}}\ (p_t+d_{i_t})\big\langle \alpha_{i_t}, \alpha_{i_q}^{\vee}\big\rangle  \\ 
  &\ \geq \underbrace{\big\langle s_{i_{q-1}}\cdots s_{i_1}\bullet \lambda,\ \alpha_{i_q}^{\vee}\big\rangle + 1}_{=p_q}\ +\ d_{i_q} \ = p_q+d_{i_q}\ >\ 0.
  \end{align*}
 We set $\widehat{\mu}=\gamma' = \mu+(p_q+d_{i_q})\alpha_{i_q}\in \wt V$ for \eqref{Eqn strategy in Theorem C proof}.
  Finally, the proof of the theorem is complete. 
\end{proof}
\section{Proof of Theorem \ref{Corollary local Weyl group invariance 2}: From weights to weight vectors, to slice decompositions}\label{Section 6}
We prove an application Theorem~\ref{Corollary local Weyl group invariance 2} of our strategy~\eqref{Eqn strategy in Theorem C proof} in the proof of Theorem~\ref{thmC}; i) which might be useful in studying $\wt V$, ii) understanding roots-strings through weights and the top weights in them \big(applications (1) and (2) above Lemma \ref{Lemma local composition series for V}\big), iii) which also motivated Problem~\ref{Problem non-vanishing vectors in free-directions for any V} on enumerations for $J_V^c$. 
Stating loosely, it says $\wt V$ is a patch-up of integrable weight-strings of simple roots, hanging down from some 1-dim. weight $\left(\prod_{i\in H}s_i\right) \bullet \lambda$ for independent $H\subseteq \mathcal{I}$.
This solves the first step of Problem \ref{Problem non-vanishing vectors in free-directions for any V}, in going  ``fully up along $J_V$-directions'', which additionally leads us to slice decompositions generalizing formulas \eqref{Int. slice. decomp. PVM} and \eqref{wt formula for simples}, discussed here.
 \big(Proposition~\ref{Enumeration proposition- freeness at module level} takes us further up in $J_V^c$-directions.\big)
We begin with $\lambda\in P^+$ and $V=L(\lambda)$ (integrable) :
\begin{lemma}\label{lemma local Weyl group invariance}
   Fix $\lambda\in P^+$ and a weight $\mu\precneqq \lambda\ \in \wt L(\lambda) \ =\ \wt L^{\max}(\lambda)$. 
   \\ \hspace*{1.5cm}  There exist\ \  a sequence of weights \  $\mu_0\ =\ \mu \  \precneqq\  \mu_1 \  \precneqq \  \cdots \  \precneqq \  \mu_n \ = \ \lambda$  \ \ and \\ 
   \hspace*{0.5cm} (not necessarily distinct) \ \  nodes \  $i_1,\ldots, i_n\in \supp(\lambda-\mu)$, \quad $n\leq \height(\lambda-\mu)$ \ \ such that :
   \smallskip
     \begin{center}
  \hspace*{2cm}$\langle\mu_t,\  \alpha_{i_t}^{\vee}\rangle\ \in \mathbb{Z}_{>0}$ \quad  and\quad  $\mu_{t-1}\ \in \big[ s_{i_t}(\mu_t),\ \mu_t\big]\ \subseteq \ \wt L(\lambda)$ \qquad\quad   $\forall$ \ \ $1\leq t\leq n$. 
   \end{center}
\end{lemma}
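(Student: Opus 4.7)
The plan is to induct on $h := \height(\lambda-\mu) \geq 1$, at each stage jumping from the current weight up to the top of an integrable simple-root string, and then applying the induction hypothesis. The key structural input is that $L(\lambda)$ is fully $\mathfrak{g}$-integrable (since $\lambda \in P^+$), so along every simple root $\alpha_i$ the weight spaces of $L(\lambda)$ form finite $\mathfrak{sl}_2$-strings, and Lemma~\ref{L3.1} fills-in every weight between $\nu$ and $s_i \nu$ whenever $\langle \nu, \alpha_i^{\vee}\rangle \geq 0$.

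In the base case $h = 1$, write $\mu = \lambda - \alpha_i$ and set $n := 1$, $\mu_1 := \lambda$, $i_1 := i$. Since $\mu \in \wt L(\lambda)$, $\mathfrak{sl}_2$-theory along $\alpha_i$ forces $\langle \lambda, \alpha_i^{\vee}\rangle \geq 1$, so $\langle \mu_1, \alpha_{i_1}^{\vee}\rangle > 0$ and $\mu_0 \in [s_i \lambda,\ \lambda] \subseteq \wt L(\lambda)$ by Lemma~\ref{L3.1}; clearly $i_1 \in \supp(\lambda-\mu) = \{i\}$.

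For the inductive step $h > 1$, Lemma~\ref{Lemma reaching lambda from mu}(b) supplies nodes $j_1, \ldots, j_h$ such that every partial sum $\lambda - \sum_{t=1}^{k} \alpha_{j_t}$ is a weight of $L(\lambda)$ and $\mu = \lambda - \sum_{t=1}^{h} \alpha_{j_t}$. Take $i_1 := j_h$, so that $\mu + \alpha_{i_1} \in \wt L(\lambda)$, and let $\mu_1$ be the top of the $\alpha_{i_1}$-string through $\mu$ inside $\wt L(\lambda)$. Integrability of $L(\lambda)$ guarantees this top exists and $\mu_1 = \mu + k\alpha_{i_1}$ for some $k \geq 1$; since $\mu$ lies strictly below the top, $\mathfrak{sl}_2$-theory along $\alpha_{i_1}$ forces $\langle \mu_1, \alpha_{i_1}^{\vee}\rangle \geq 1$, whereupon Lemma~\ref{L3.1} places $\mu \in [s_{i_1}\mu_1,\ \mu_1] \subseteq \wt L(\lambda)$. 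Because $\height(\lambda-\mu_1) = h - k < h$, the induction hypothesis applied to $\mu_1$ produces $\mu_1 \precneqq \mu_2 \precneqq \cdots \precneqq \mu_n = \lambda$ with nodes $i_2, \ldots, i_n$ satisfying the stated properties; prepending $\mu_0 := \mu$ and $i_1$ finishes the recursion.

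The two combinatorial constraints follow telescopically: each $\mu_t - \mu_{t-1} = k_t \alpha_{i_t}$ with $k_t \geq 1$, so $\lambda - \mu = \sum_{t=1}^n k_t \alpha_{i_t}$ places every $i_t$ in $\supp(\lambda - \mu)$ and forces $n \leq \sum_t k_t = \height(\lambda - \mu)$. I do not foresee a serious obstacle; the one point requiring care is ensuring each upward jump is \emph{strict}, which is exactly why $\mu_1$ must be chosen as the top of the $\alpha_{i_1}$-string rather than $\mu$ itself, and which relies on $\mu + \alpha_{i_1}$ being an actual weight, as guaranteed by Lemma~\ref{Lemma reaching lambda from mu}(b).
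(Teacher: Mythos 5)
Your proof is correct and follows essentially the same strategy as the paper's: induction on $\height(\lambda-\mu)$, climbing one integrable simple-root string at a time via Lemma~\ref{Lemma reaching lambda from mu}(b) and Lemma~\ref{L3.1}. The only cosmetic difference is that you jump to the top of the $\alpha_{i_1}$-string through $\mu$, whereas the paper either reflects by $s_i$ (when some pairing $\langle\mu,\alpha_i^{\vee}\rangle<0$) or steps up by a single $\alpha_j$; both variants yield the strict ascent and the telescoping bounds on $\supp$ and $n$.
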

\begin{proof}[{\bf Proof}]
    We induct on $\height(\lambda-\mu)\geq 1$. 
    In the base step, $\mu=\lambda-\alpha_i$ for some $i\in \mathcal{I} \ = J_{\lambda}$, and since $\mu\in \wt L(\lambda)$, we must have $\langle\lambda,\alpha_i^{\vee}\rangle >0$ as required.\\
    Induction step: Let $\height(\lambda-\mu)>1$. 
    If $\langle\mu, \alpha_i^{\vee}\rangle < 0 $ for some $i\in \mathcal{I}$, we apply the induction hypothesis to $s_{i}\mu \succneqq \mu$. 
    Else, $\langle\mu,\alpha_i^{\vee}\rangle \geq 0$ $\forall$ $i\in \mathcal{I}$, and let $\mu+\alpha_j\in \wt V$ for $j\in \mathcal{I}$ \big(by Lemma \ref{Lemma reaching lambda from mu}\big).
    Then $\mu  \in \big[s_j(\mu+\alpha_j),\ (\mu+\alpha_j)\big]$, and applying the induction hypothesis to $\mu+\alpha_j$ finishes the proof.  
\end{proof}
This lemma shows that every weight of integrable $L(\lambda)$ belongs to some integrable/finite-string of some simple root -- in particular we can go up from $\mu\precneqq \lambda$ to some weight strictly above $\mu$ -- which is expected as the module $L(\lambda)$ to begin with is integrable.
Next, note $J_{L(\lambda)}= \mathcal{I}$, and in the chain of weights above $\mu$, the top weight $\mu_n$ is the highest weight $\lambda$; which we will generalize in Proposition \ref{Corollary local Weyl group invariance 1} for arbitrary $V$ (but $\lambda\in P^+$) below.
This leads us to extend and show this lemma in Theorem \ref{Corollary local Weyl group invariance 2} for every $(\lambda, V)$ and for almost every weight in $V$ \big(studied by \eqref{Eqn strategy in Theorem C proof}\big): 
\begin{prop} \label{Corollary local Weyl group invariance 1} Let $\mathfrak{g}$ be Kac--Moody algebra, $\lambda\in P^+$, and fix a module $M(\lambda)\twoheadrightarrow V$ with $J_V=\mathcal{I}$. 
$\text{Fix a weight }\mu\in \wt V\text{  with }\supp(\lambda-\mu)\text{ non-independent}$.
 Then \big(as in result \eqref{Eqn strategy in Theorem C proof}\big),   there exist a weight $\widehat{\mu}\in \wt V$ and a node $i\ \in \ \supp(\lambda-\mu)$  such that :
    \begin{align}\label{Eqn local Weyl group action  on weights}
      \begin{aligned}
\langle\widehat{\mu},\alpha_i^{\vee}\rangle > 0,\qquad \widehat{\mu}\ \succneqq\  \mu  \qquad \text{and} \quad   \mu\ \in\  \big[s_i\widehat{\mu}, \ \widehat{\mu} \big]\ \subseteq \wt V.
    \end{aligned}
    \end{align}
    More generally,  there exist sequences of weights and of (non necessarily distinct) nodes
      \begin{align}\label{Eqn integrable simple root strings above every weight} 
    \begin{aligned}[c]
     \mu_0\ =\ \mu\ \precneqq \ &\mu_1\ \precneqq \  \cdots\ \precneqq  \ \mu_n\ \in \wt V \quad \text{ and }\ \  
\text{resp.}\ \ i_1,\ldots, i_n\in \supp(\lambda-\mu) \ \  \text{s.t. :}\\  
   \langle\mu_t,\ \alpha_{i_t}^{\vee}\rangle&\ \in \mathbb{Z}_{>0},\qquad  \mu_{t-1}\  \in  \big[ s_{i_t}(\mu_{t}),\ \mu_t \big], \qquad \supp(\lambda-\mu_n) \ \text{ is independent,}\\
    &\ \mu_n\ = \ \lambda\ - \ \sum_{i\in \supp(\lambda-\mu_n)}d_i\alpha_i\quad \text{ with }\ \ d_i \ >\ \lambda(\alpha_i^{\vee})\ \ \forall\ i.
    \end{aligned}
    \end{align}
    \end{prop}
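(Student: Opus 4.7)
The plan is to deduce the proposition in two stages: first establish the local step \eqref{Eqn local Weyl group action on weights}, then iterate it (interleaved with a final $\mathfrak{sl}_2$-phase) to obtain the chain \eqref{Eqn integrable simple root strings above every weight}. The local step is essentially already built in the proof of Theorem \ref{thmC} (Steps 3--6), as flagged by the remark embedded there; our task is to extract it cleanly and to show that iterating it terminates at a weight with independent support satisfying the deep-integrability bound $d_i > \lambda(\alpha_i^{\vee})$.

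For the local step, given $\mu$ with $\supp(\lambda-\mu)$ non-independent, I would first apply Lemma \ref{Lemma local composition series for V} to pick a simple sub-quotient $L(\lambda')\subseteq V$ containing $\mu$, and iteratively descend through maximal vectors $f_{i_r}^{p_r}\cdots f_{i_1}^{p_1}v_\lambda$ inside highest weight sub-modules $V\supseteq V_1\supseteq V_2\supseteq\cdots$, as in Step~3; this yields $\lambda'=(s_{i_k}\cdots s_{i_1})\bullet\lambda$ with $i_t\in J_V\subseteq J_\lambda$ and $\lambda'-\mu\in\mathbb{Z}_{\geq0}\Pi_{\mathcal{I}\setminus J_{\lambda'}}$. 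Then, as in Steps~4--6, I pick the smallest $q$ for which $\{i_{q+1},\dots,i_k\}$ is independent, use Lemma \ref{Lemma Serre} together with the Serre-relation bookkeeping of \eqref{Eqn triple step 6 } to produce a higher maximal vector $v\in V_{\lambda''}$, and compare $\alpha_{i_q}^\vee$-values via the chain of inequalities at the end of Step~6 to pinpoint a weight $\widehat\mu\succneqq\mu$ with $\langle\widehat\mu,\alpha_{i_q}^\vee\rangle>0$; applying Lemma \ref{L3.1} to $\widehat\mu$ then gives $\mu\in[s_{i_q}\widehat\mu,\widehat\mu]\subseteq\wt V$. Since $\mu_t\succeq\mu$ forces $\supp(\lambda-\mu_t)\subseteq\supp(\lambda-\mu)$ throughout the iteration, the chosen $i_t$ always lies in $\supp(\lambda-\mu)$, as required by the statement.

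To obtain \eqref{Eqn integrable simple root strings above every weight}, set $\mu_0:=\mu$ and repeatedly apply \eqref{Eqn local Weyl group action on weights}; since each application strictly decreases $\height(\lambda-\mu_t)$, after finitely many steps we reach some $\mu_m$ with $\supp(\lambda-\mu_m)$ independent, say $\mu_m=\lambda-\sum_{i\in I}d_i\alpha_i$ with $I$ independent. If $I=\emptyset$ or every $d_i>\lambda(\alpha_i^\vee)$, take $n=m$. Otherwise pick $i\in I$ with $d_i\leq\lambda(\alpha_i^\vee)$; since $I$ is independent, the $f_j$ for $j\in I\setminus\{i\}$ commute with $e_i$, and 1-dimensionality of $V_{\mu_m}=\mathbb{C}\prod_{j\in I}f_j^{d_j}v_\lambda$ together with the standard $\mathfrak{sl}_2$ calculation $e_i^{d_i}f_i^{d_i}v_\lambda=d_i!\prod_{s=0}^{d_i-1}(\lambda(\alpha_i^\vee)-s)\,v_\lambda\neq 0$ (as $\lambda\in P^+$ and $d_i\leq\lambda(\alpha_i^\vee)$) yields $\mu_{m+1}:=\mu_m+d_i\alpha_i=\lambda-\sum_{j\in I\setminus\{i\}}d_j\alpha_j\in\wt V$, with $\langle\mu_{m+1},\alpha_i^\vee\rangle=\lambda(\alpha_i^\vee)>0$ and $\mu_m\in[s_i\mu_{m+1},\mu_{m+1}]$. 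The new support $I\setminus\{i\}$ is still independent and strictly smaller, so iterating this $\mathfrak{sl}_2$-phase either exhausts the support (terminating at $\lambda$) or leaves only nodes with $d_i>\lambda(\alpha_i^\vee)$, giving the desired $\mu_n$. The main obstacle is the local step: tracking the cascade of maximal vectors in Steps~3--6 of the proof of Theorem \ref{thmC} and managing the Serre-relation identity \eqref{ESerre2} to produce $\widehat\mu$ with the required strict inequality $\langle\widehat\mu,\alpha_{i_q}^\vee\rangle>0$; once that is in hand, the height/support induction organizing the global iteration is essentially routine.
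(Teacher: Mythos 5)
Your proposal is correct and follows essentially the same route as the paper: the local step \eqref{Eqn local Weyl group action  on weights} is extracted from Steps 2--6 of the proof of Theorem \ref{thmC} (exactly as the paper does by citation), and the chain \eqref{Eqn integrable simple root strings above every weight} is built by iterating it until the support becomes independent and then clearing the nodes with $d_i\leq\lambda(\alpha_i^{\vee})$ via the $\mathfrak{sl}_2$-computation $e_i^{d_i}f_i^{d_i}v_{\lambda}\neq 0$, which is the paper's set $K$ construction made explicit. No gaps; your justification of the final phase is if anything slightly more detailed than the paper's.
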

    \begin{remark}
  Above, in contrast to lower weights $\mu_t$, observe $\mu_n$ belongs to the non-integrable $\alpha_i$-string \big(that of the infinite simple $\mathfrak{sl}_{\alpha_i}$-module $L_{\{i\}}(-\lambda(\alpha_i^{\vee})-2)$\big) for all $i\in \supp(\lambda-\mu_n)$.
        In other words, since $H=\supp(\lambda-\mu)$ is independent, $\mu_n$ is a weight of the non-integrable simple $\mathfrak{g}$-module $L\big([\prod_{i\in H}s_i]\bullet \lambda\big)$, or equivalently of the non-integrable $\mathfrak{g}_H$-simple $L_H\big([\prod_{i\in H}s_i]\bullet \lambda\big)$.  
    \end{remark}
    \begin{proof}[{\bf Proof of Proposition \ref{Corollary local Weyl group invariance 1}}]
Property \eqref{Eqn local Weyl group action  on weights} immediately follows by the proof of Claim \eqref{Eqn strategy in Theorem C proof} \big(Theo rem~\ref{thmC} Steps (2)--(5)\big). 
For \eqref{Eqn integrable simple root strings above every weight}, we set $\mu_0=\mu$, $i_1=i$ and $\mu_1=\widehat{\mu}$. 
    If $\supp(\lambda-\mu_1)$ is not independent, in turn \eqref{Eqn local Weyl group action  on weights} applied to $\mu_1$ gives  \big($i_2\in \mathcal{I}, \ \mu_2\in \wt V$\big) with $\langle\mu_2,\alpha_{i_2}^{\vee}\rangle>0$ and $\mu_1\in \big[ s_{i_2}\mu_2,\ \mu_2\big]$. 
    Proceeding this way yields a chain $\mu_1\ \precneqq \ \cdots \precneqq \ \mu_m \in \wt V$ satisfying the pairing and interval conditions in \eqref{Eqn integrable simple root strings above every weight}, and with $m$ the least number s.t. $\supp(\lambda-\mu_t)$ is independent iff $t=m$ (in this chain).
    Let $\mu_m\ = \ \lambda-\sum_{i\in \supp(\lambda-\mu_m)} d_i\alpha_i$ for $d_i\in \mathbb{Z}_{>0}$.
    For the last condition in \eqref{Eqn integrable simple root strings above every weight}, we consider $K=\big\{ i\in \supp(\lambda-\mu_m)\ |\ d_i\leq \lambda(\alpha_i^{\vee}) \big\}$, and let $K=\big\{j_1,\ldots, j_k\big\}$ (in any fixed order, as $K$ is independent) with $k=|K|$.
    Observe, we are done up on setting $n=k+m$, $i_{m+r}=j_r$ and $\mu_{m+r}\ =\ \mu_m + \sum_{t=1}^r d_{j_t}\alpha_{j_t}$ $\forall$ $1\leq r\leq k$; so that $\mu_{m+k}$ is the desired top weight in the statement.
    \end{proof}
    \begin{remark}\label{Remark sl2 theory using rasing operators of general V}
    By result \eqref{Eqn local Weyl group action  on weights} and by $\mathfrak{sl}_2$-theory (for any $\mu\in \wt V$ with non-independent support):\quad $e_i \cdot V_{\mu}\ \neq 0$\quad \big(as 
$e_i\cdot [f_i^k\cdot V_{\widehat{\mu}}]\ \neq 0$ for $0\leq k\leq\ \widehat{\mu}(\alpha_i^{\vee})$\big).\qquad  In other words, \ \ $\mathfrak{n}^+V_{\mu}\neq 0$.\\
    So, part (a) guarantees that we can go one-step above $\mu$ by acting $e_i$s.
    
   Now an we go further-up from $\mu$ successively by $e_i$s, as in the proof of Lemma \ref{L4.2} \big(which is Problem~\ref{Problem non-vanishing vectors in free-directions for any V}(b)\big)?
    For general $\mu$ this might not be possible, but for $\mu$ along the free-directions -- i.e. $\mu\in \wt_{J_V^c}V$ -- this stronger property holds \big(see discussion below Remark \ref{Remark ordering only for support for C(i)s}\big), showed in Section \ref{Section 7}.
    \end{remark}
    Below is the complete result of Proposition \ref{Corollary local Weyl group invariance 1}, for every $\lambda\in \mathfrak{h}^*$ and $M(\lambda)\twoheadrightarrow V$:
    \begin{theorem}\label{Corollary local Weyl group invariance 2}
    Let $\mathfrak{g}$ be a Kac--Moody algebra, $\lambda\in \mathfrak{h}^*$ and $M(\lambda)\twoheadrightarrow V$. Fix any $J_V\ \subseteq\ J\ \subseteq \ J_{\lambda}$.
    \begin{itemize}
    \item[(a)] Fix a weight $\mu =\lambda-\sum_{i\in \mathcal{I}}c(i)\alpha_i \ \in  \wt V$ and root-sum $\xi = \sum\limits_{i\in J^c}c(i)\alpha_i$. 
    The results analogous to those in Proposition \ref{Corollary local Weyl group invariance 1}, hold inside -
    \begin{equation}\label{Eqn defn Jlambda weight slice}
    \boldsymbol{J}\text{\bf -weight slice at }\boldsymbol{\xi} \ {\bf :}\qquad\qquad \wt_J^{\xi}V\quad :=\ \ \ \Big(\lambda-\xi - \mathbb{Z}_{\geq 0}\Pi_J\Big)\ \cap\ \wt V. \hspace*{2.5cm}
    \end{equation}
    Namely, we have a sequence of weights in this slice and (not necessarily distinct) nodes-
    \begin{align}\label{Eqn integrable simple root strings above every weight in general case} 
    \begin{aligned}[c]
     \mu_0\ =\ \mu\ \precneqq \ &\mu_1\ \precneqq \  \cdots\ \precneqq  \ \mu_n\ \in \wt_J^{\xi} V, \quad \text{ and }\ \  
\text{resp.}\ \ i_1,\ldots, i_n\in \supp(\lambda-\mu)\cap J \ \  \text{s.t. :}\\  
   &\
    \langle\mu_t,\ \alpha_{i_t}^{\vee}\rangle\ \in \mathbb{Z}_{>0},\quad  \mu_{t-1}\  \in  \big[ s_{i_t}(\mu_{t}),\ \mu_t \big], \quad
    \supp(\lambda-\mu_n)\cap J \ \text{ is independent,}
    \end{aligned}
    \end{align}
    \item[(b)] 
    The weight-slice \eqref{Eqn defn Jlambda weight slice} for ($\mu$, $\xi$) is the weight-set of (not necessarily unique) highest weight $\mathfrak{g}_{J}$-submodule $V[\xi, J]$ of $V$ with highest weight $\lambda-\xi\ \big|_{\mathfrak{h}_J}$, defined in below steps :\smallskip \begin{itemize}
    \item For $\mu$ as above, fix the weight chain $\mu_1,\ldots, \mu_n$ as in result \eqref{Eqn integrable simple root strings above every weight in general case}. 
    \item Write $\mu_n= \lambda-\sum\limits_{i\in J} d_i\alpha_i-\sum\limits_{j\in J^c}c(j)\alpha_j$ and $\xi = \sum\limits_{j\in J^c}c(j)\alpha_j$, for $d_i, c(j)\in \mathbb{Z}_{\geq 0}$, and call $H\ =\ \big\{ i\in \supp(\lambda-\mu_n)\cap J\ |\  d_i\ \geq\  \lambda(\alpha_i^{\vee})- \xi(\alpha_i^{\vee})+1 >0  \big\} \ \subseteq \ J$.
    \item By Algorithm \ref{Section 6 Algorithm for enumerations} in the next section, or by Algorithm \ref{Algorithm for enumerations} in Section \ref{Section 1} when $\lambda\in P^+$, we fix an enumeration for (the support of $\xi$) $J^c\ = \ \big\{j_1,\ldots, j_N \big\}$, $N=|J^c|$. 
    \end{itemize}
    Then $\mu$ is a weight of the below highest weight $\mathfrak{g}_J$-module $V[\xi, H, J]$ with highest weight $\Big(\prod_{i\in H}s_i\Big)\bullet [\lambda -\xi]\ \Big|_{\mathfrak{h}_J}$: \quad 
    \big(see Remark \ref{Remark seeking non-vanishijng weight vectors in free-directions for Theorem integrable strings for weights (c')} on the below monomials not killing $v_{\lambda}$\big)
    \begin{equation}\label{Defn module V' with J-weight-slice}
    \hspace*{1cm}
 V[\xi, H,J]\ :=\  U(\mathfrak{g}_J) \cdot {\color{black}\bigg(\prod_{i\in H} f_i^{[\lambda-\xi](\alpha_i^{\vee})+1} \bigg)}\cdot \Big(f_{j_1}^{c({j_1})}\ \cdots\ f_{j_N}^{c({j_N})}\Big)\cdot v_{\lambda}\ \ \neq \ 0.
    \end{equation}
   Hence, running over all ($\mu$ and the corresponding) finitely many independent sets $H$, yields:
    \begin{equation}
        \wt_J^{\xi} V\ \ =\ \  \wt\big( V[\xi, J]\big) \qquad \text{ for }\ \ \underset{\tiny \text{(Verma over }\mathfrak{g}_J)}{M_J(\lambda-\xi)}\ \twoheadrightarrow \  \ \ V[\xi, J]\ := \  U(\mathfrak{g}_J) \ \cdot\  \Big(f_{j_1}^{c_{j_1}}\ \cdots\ f_{j_N}^{c_{j_N}}\Big)\cdot v_{\lambda}. 
    \end{equation}
    \end{itemize}
\end{theorem}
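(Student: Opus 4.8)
The plan is to build on Proposition \ref{Corollary local Weyl group invariance 1} and the machinery inside the proof of Theorem \ref{thmC}, reducing the general-$\lambda$ case to the dominant-integral case by the usual $J_\lambda$-reduction, and then handling the non-integrable directions via $\mathfrak{sl}_2$-strings and Lemma \ref{Lemma Serre}. First I would fix $\mu \in \wt_J^\xi V$ and run the iterative ``going up along $J$-directions'' procedure: whenever $\supp(\lambda-\mu)\cap J$ is non-independent, apply the analogue of property \eqref{Eqn local Weyl group action  on weights} (which is exactly what the Steps 2--6 in the proof of Theorem \ref{thmC} establish, now confining the raising node $i$ to lie in $\supp(\lambda-\mu)\cap J$ since $J \supseteq J_V$ and all the maximal vectors produced there are supported on $J_V \subseteq J$) to replace $\mu$ by a strictly higher weight $\widehat\mu$ in the same slice $\wt_J^\xi V$, via a simple-root reflection $s_i$ with $i\in J$. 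Iterating, and noting $\height(\lambda-\mu)<\infty$, terminates at a weight $\mu_n$ with $\supp(\lambda-\mu_n)\cap J$ independent; this proves \eqref{Eqn integrable simple root strings above every weight in general case}. The key observation keeping everything inside the slice is that all the reflections used are $s_i$ with $i\in J$, so the $J^c$-component $\xi$ of $\lambda-\mu$ never changes (exactly as in Step 5/Step 6 of the Theorem \ref{thmC} proof, where $\langle x,\alpha^\vee\rangle$ is controlled and $x$ is untouched).

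Next, for part (b), I would realize the slice as a $\mathfrak{g}_J$-module. Given $\mu$ and its terminal weight $\mu_n$ from part (a), I set $H = \{ i\in \supp(\lambda-\mu_n)\cap J : d_i \geq [\lambda-\xi](\alpha_i^\vee)+1 > 0\}$, which is independent. The claim is that the monomial in \eqref{Defn module V' with J-weight-slice} does not kill $v_\lambda$: the $f_{j_1}^{c(j_1)}\cdots f_{j_N}^{c(j_N)}$ part survives because the enumeration of $J^c$ comes from Algorithm \ref{Algorithm for enumerations}/\ref{Section 6 Algorithm for enumerations}, which is precisely designed (via the $\min\{c(a_1),c(a_2)\}$ choices and Lemma \ref{Lemma Serre}) so that this product is nonzero on $v_\lambda$ — this is Proposition \ref{Enumeration proposition- freeness at module level}(a)/(c) applied with $J_V^c$ replaced by $J^c$; then the further factor $\prod_{i\in H} f_i^{[\lambda-\xi](\alpha_i^\vee)+1}$ acting on $f_{j_1}^{c(j_1)}\cdots f_{j_N}^{c(j_N)}\cdot v_\lambda$ (which has $\mathfrak h_J$-weight $\lambda-\xi$ after the $J^c$-part is stripped) is a maximal vector for $\mathfrak{n}_J^+$ of $\mathfrak{sl}_2$-highest weight $[\lambda-\xi](\alpha_i^\vee)$ in each $i\in H$-direction, hence nonzero and again maximal, generating $L$-type data over $\mathfrak{g}_J$. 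So $V[\xi,H,J]$ is a genuine highest weight $\mathfrak{g}_J$-module with highest weight $\big(\prod_{i\in H}s_i\big)\bullet[\lambda-\xi]\big|_{\mathfrak h_J}$, and by construction $\mu\in \wt V[\xi,H,J] \subseteq \wt_J^\xi V$. Running over the finitely many $\mu$ (equivalently, over the finitely many independent terminal sets $H$ that can arise) and taking the union, together with the Verma-level upper bound $\wt_J^\xi V \subseteq \wt M_J(\lambda-\xi)$ and the fact that each $f_{j_1}^{c(j_1)}\cdots f_{j_N}^{c(j_N)}\cdot v_\lambda \neq 0$ generates a highest weight $\mathfrak{g}_J$-submodule $V[\xi,J]$ whose weights exhaust the slice, gives $\wt_J^\xi V = \wt V[\xi, J]$.

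The main obstacle I anticipate is the bookkeeping in the general $\lambda\in\mathfrak{h}^*$ case: one must verify that the enumeration from Algorithm \ref{Section 6 Algorithm for enumerations} interacts correctly with the $J_\lambda^c \cap J^c$ versus $J_\lambda \cap J^c$ split of $J^c$ (the non-integrable free directions need the Lemma \ref{L4.2}-style clearing, the integrable ones need Algorithm \ref{Algorithm for enumerations} proper), and that when we prepend $\prod_{i\in H}f_i^{[\lambda-\xi](\alpha_i^\vee)+1}$ the Serre-relation commutations (Lemma \ref{Lemma Serre}, with $N = [\lambda-\xi](\alpha_i^\vee)+1$ and the adjacent $j\in J^c$) do not produce cancellation — this is where the $\min$-choices in the algorithm and the lower bounds $c(j) > -N\langle\alpha_i,\alpha_j^\vee\rangle$ must be checked to hold along the chain. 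I would handle this exactly as in Steps 5 and 6 of the proof of Theorem \ref{thmC}: compare the two relevant exponents, use the smaller one as the reflected direction, and push the nonvanishing through by $\mathfrak{sl}_2$-theory. Everything else — the termination, the independence of $H$, and the passage from weights to the submodule $V[\xi,J]$ — is routine once these commutation estimates are in place.

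\begin{remark}\label{Remark seeking non-vanishijng weight vectors in free-directions for Theorem integrable strings for weights (c')}
The nonvanishing of the monomial in \eqref{Defn module V' with J-weight-slice} on $v_\lambda$ is exactly the content of Problem \ref{Problem non-vanishing vectors in free-directions for any V}(a) (with $J_V^c$ there replaced by $J^c$), resolved affirmatively for the orderings supplied by Algorithm \ref{Algorithm for enumerations} (or Algorithm \ref{Section 6 Algorithm for enumerations}) in Proposition \ref{Enumeration proposition- freeness at module level}; the extra left factor $\prod_{i\in H}f_i^{[\lambda-\xi](\alpha_i^\vee)+1}$ survives since it acts as a maximal $\mathfrak{sl}_{\alpha_i}$-vector of highest weight $[\lambda-\xi](\alpha_i^\vee)\in\mathbb Z_{\geq 0}$ for each independent $i\in H$.
\end{remark}
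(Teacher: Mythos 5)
Your part (a) follows the paper's route: write $\mu=\mu'-\sum_t\gamma_t$ with $\mu'\in\wt_{J_V}V$ and $\gamma_t\in\Delta_{J_V^c,1}$ via \eqref{Minkowski formula for supp of holes}, raise along nodes of $J$ using Proposition \ref{Corollary local Weyl group invariance 1} and suitably $s_i$-conjugated $\gamma_t$'s, and note that reflections $s_i$ with $i\in J$ leave the $J^c$-component $\xi$ untouched. You do gloss over the case where $\supp(\lambda-\mu')$ is independent (the paper's Step 2, with the set $J'$ and the minimal roots $\gamma_t''$), but the strategy is the same and that step is fillable.

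Part (b), however, has a genuine gap at exactly the point the paper isolates in Remark \ref{Remark seeking non-vanishijng weight vectors in free-directions for Theorem integrable strings for weights (c')}. You claim that $\prod_{i\in H}f_i^{[\lambda-\xi](\alpha_i^{\vee})+1}$ applied to $v''=f_{j_1}^{c(j_1)}\cdots f_{j_N}^{c(j_N)}\cdot v_{\lambda}$ is nonzero because $v''$ is a maximal vector of $\mathfrak{sl}_{\alpha_i}$-weight $m_i=[\lambda-\xi](\alpha_i^{\vee})\in\mathbb{Z}_{\geq 0}$. That is precisely the situation in which $\mathfrak{sl}_2$-theory gives you \emph{no} conclusion about $f_i^{m_i+1}v''$: the module $U(\mathfrak{sl}_{\alpha_i})v''$ is a quotient of the $\mathfrak{sl}_2$-Verma of highest weight $m_i$, and if it happens to be the $(m_i+1)$-dimensional simple then $f_i^{m_i+1}v''=0$. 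Whether the Verma or the simple occurs depends on $V$ and on the particular $v''$, and settling it is the entire content of the nonvanishing claim \eqref{Defn module V' with J-weight-slice}. The paper proves it by contradiction: assuming the relevant vector $x$ vanishes, it pushes $e$'s through the Lie words $F(\bar b,k)$ (via the higher-rank version of identity \eqref{ESerre2} and Lemma \ref{Lemma Serre}) to deduce that $V_{\prod_{h\in H}s_h\bullet\lambda}=0$, and then shows this is incompatible with the existence of the top weight $\mu_n\in\wt V$ from part (a), whose defining inequalities $d_i\geq\lambda(\alpha_i^{\vee})-\xi(\alpha_i^{\vee})+1$ for $i\in H$ force a weight of $\wt_H V$ below $\prod_{h\in H}s_h\bullet\lambda$. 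Without some such argument tying the nonvanishing back to $\mu_n$, your deduction that $V[\xi,H,J]\neq 0$ and that $\mu\in\wt V[\xi,H,J]$ does not go through; the same unjustified $\mathfrak{sl}_2$ claim reappears verbatim in your closing remark.
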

     \begin{proof}[{\bf Proof of Theorem \ref{Corollary local Weyl group invariance 2} part (a)}]
   We postpone the proof of part (b) to the end of the Section \ref{Section 7} after writing-down Algorithm \ref{Section 6 Algorithm for enumerations}, and show here part (a) \big(chain property \eqref{Eqn integrable simple root strings above every weight in general case} inside slice \eqref{Eqn defn Jlambda weight slice}\big).
 Fix $\mu=\lambda-\sum_{i\in \mathcal{I}}c(i)\alpha_i$ as in part (a), and $J_V\subseteq J\subseteq J_{\lambda}$.
 The below steps show the existence of $i_1$ and $\mu_1$, and one can then complete the proof of this part via inducting on $\height_J(\lambda-\mu)$.
    To begin with, if $\mu\in \wt_{J_V} V$ -- in particular, here $\xi=0$ -- then the result follows by Proposition \ref{Corollary local Weyl group invariance 1}.
    So we assume: i) $\height_{J_V^c}(\lambda-\mu)=k>0$; ii) $\height_J(\lambda-\mu)>0$, and moreover $\supp(\lambda-\mu)\cap J$ is non-independent \big(as when $\supp(\lambda-\mu)$ is independent, particularly $\emptyset$, we are done by setting $\mu_0=\mu=\mu_n$\big).
    Thus, we write $\mu \ = \ \mu' - \big(\gamma_1+\cdots+\gamma_k\big)$ for $\mu'\in \wt_{J_V} V$ and $\gamma_t\in \Delta_{J_V^c,1}$ by formula \eqref{Minkowski formula for supp of holes}.
\smallskip
\\ \underline{Step 1}: Suppose $\supp(\lambda-\mu')$ is not independent; in particular here $\mu'\precneqq \lambda$.
 Our strategy now is similar to that in the proof of Theorem \ref{thmA}.
   We apply Proposition \ref{Corollary local Weyl group invariance 1} to $\mu'$, which yields us a weight $\mu_1'\in \wt_{J_V}V \ \subseteq \wt_J V$ and a node $i_1\in J\cap \supp(\lambda-\mu') \subseteq J_V\subseteq J$ such that $\mu_1'\big(\alpha_{i_1}^{\vee}\big)>0$ and $\mu'\ \in\ \big[s_{i_1}\mu_1',\ \mu_1'\big)\ = \ \big[s_{i_1}\mu_1',\ \mu_1'\big]\setminus \{\mu_1'\} $.
To see that this choice of $i_1$ works, we set $\gamma_t':=\begin{cases}
    \gamma_t\ & \text{if }\gamma_t\big(\alpha_{i_1}^{\vee}\big)\leq 0\\
    s_{i_1}(\gamma_t)\ & \text{otherwise}
\end{cases}$,\  which belong to $\Delta_{J_V^c,1}$ $\forall$ $1\leq t\leq k$.
Now check that $\mu\ \in\ \big[s_{i_1}\big(\mu_1'- (\gamma_1'+\cdots+\gamma_k') \big), \  
\mu_1'- (\gamma_1'+\cdots+\gamma_k')  \big]$.
So, we are done by setting $\mu_1\ = \ \mu_1'- (\gamma_1'+\cdots+\gamma_k')$.\smallskip \\
\underline{Step 2}: In this step we assume that $\supp(\lambda-\mu')$ is independent; this includes the case $\mu'=\lambda$.
Write $\mu'=\lambda-\sum\limits_{j\in \supp(\lambda-\mu')}c'_j\alpha_j$.
To begin with, for some node $j\in J_V$, suppose one of the following two cases arise: 1)~$0< c'_j\leq \lambda(\alpha_j^{\vee})$; or 2)~$c'_j=0$ but (without loss of generality) $\height_{J_V}(\gamma_1)>0$ and moreover $\gamma_1-\alpha_j\in \Delta_{J_V^c,1}$. 
Then we are done by setting $i_1=j$ and defining $\gamma_1',\ldots,\gamma_k'\in \Delta_{J_V^c,1}$ as in the above step, and then setting $\mu_1= \underbrace{(\mu'+c'_j\alpha_j)}_{\in \wt_{J_V}V}- \gamma'_1-\cdots - \gamma'_k$.
So, we assume by case 2) that whenever $\gamma_t-\alpha
_j\in \Delta_{J_V^c,1}$ for some $1\leq t\leq k$ and $j\in J_V$, then necessarily $c'_j > \lambda(\alpha_j^{\vee})$.
In view of this, we set $J'=\big\{ j\in \supp(\lambda-\mu')  \ | \ c'_j>\lambda(\alpha_j^{\vee})\big\}\ \subseteq J_V\subseteq J$, and choose (in any way) for each $1\leq t\leq k$ minimal roots $\gamma_t''\in\ \Delta_{J_V^c, 1} \ \cap\ \big(\gamma_t-\mathbb{Z}_{\geq 0}\Pi_{J'}\big)$ such that $\height(\gamma_t'')$ is least.
Finally, we set $\mu'' = \mu'- (\gamma_1-\gamma_1'')-\cdots- (\gamma_k-\gamma_k'')$, which can be checked to be a (one-dimensional) weight in $\wt L_J\big( \big[\prod_{j\in J'}s_j\big]\bullet \lambda\big)$ by the definition of $J'$ \big(and by ``$\mathfrak{sl}_2^{\oplus |J'|}$-theory''\big).
Moreover, $\wt L_J\big( \big[\prod_{j\in J'}s_j\big]\bullet \lambda\big) \subset \wt V$ because  $\supp(\lambda-\mu')$ is independent and as $\mu'$, and thereby $\big[\prod_{j\in J'}s_j\big]\bullet \lambda\succeq \mu'$, survive in $\wt V$.
Now $\mu = \mu' - \gamma_1-\cdots -\gamma_k \ = \ \mu' - \gamma_1''-\cdots - \gamma_k''$, and once again by case 2) and by Lemma \ref{Lemma PSP in unit I-ht roots} and by the minimality of $\gamma_1'',\ldots, \gamma_k''$, we may assume that all $\gamma_1'',\ldots, \gamma_k''$ are simple roots in $\Pi_{J_V^c}$.
Recall our assumption above Step 1,  $\supp(\lambda-\mu)\cap J$ has at least one edge.
Let it be between $j_1, j_2\in \supp(\lambda-\mu)\cap J$ and say $\height_{\{j_1\}}(\lambda-\mu)\leq \height_{\{j_2\}}(\lambda-\mu)$.
Then we set $i_1=j_1$, and set $\mu_1=\big(\mu''-\height_{\{i_1\}}(\lambda-\mu'')\alpha_{i_1}\big)-\sum_{\gamma''_t\neq \alpha_{i_1}} \gamma''_t$ which lies in $\wt V$ by the independence of $\supp(\lambda-\mu'')$ and by weight formulas \eqref{Minkowski formula for supp of holes} or \eqref{All Minknowski formulas}; and these choices here work because $ \mu_1\big(\alpha_{i_1}^{\vee}\big) \geq -\height_{\{j_2\}}(\lambda-\mu)\langle\alpha_{j_2},\alpha_{j_1}^{\vee}\rangle\geq \height_{\{i_1\}}(\lambda-\mu)$.
Hence the proof is complete.
    \end{proof}
    Assuming the validity of Theorem \ref{Corollary local Weyl group invariance 2}(b), we extend slice formulas of Khare et al. as follows:
\begin{theorem}\label{theorem J-slice decomp.}
  In the notations of Theorem \ref{Corollary local Weyl group invariance 2}, for any $\big(\mathfrak{g}, \ \lambda, \ V\big)$, we have generalizing the integrable slice decompositions \eqref{Int. slice. decomp. PVM} and \eqref{wt formula for simples}  of $V=$ parabolic Vermas and resp. simple $L(\lambda)$s:  
  \begin{equation}\label{Slice decomp. for all V}
 \text{\bf J-Slice-decomposition}\ \ \   \wt V \ =  \ \bigsqcup\limits_{\xi\ \in \mathbb{Z}_{\geq 0}\Pi_{J^c}}  \wt V[\xi, J] \  \   =\ \ \ \ \  \  \ \    \bigcup\limits_{\mathclap{\substack{H \text{ independent in }J\\
\xi\ \in \mathbb{Z}_{\geq 0}}\Pi_{J^c}}} \  \ \ \ \ \ \  \underbrace{\wt V\big[\xi, H, J\big]}_{\neq \emptyset}.
  \end{equation}
\end{theorem}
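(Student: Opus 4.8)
The plan is to obtain \eqref{Slice decomp. for all V} as a direct consequence of Theorem \ref{Corollary local Weyl group invariance 2}(b) together with the Minkowski difference formula \eqref{All Minknowski formulas} of Theorem \ref{thmC}. First I would establish the left-hand equality $\wt V = \bigsqcup_{\xi \in \mathbb{Z}_{\geq 0}\Pi_{J^c}} \wt V[\xi,J]$. For the union being all of $\wt V$: any $\mu \in \wt V$ can be written uniquely as $\mu = \lambda - \sum_{i\in J}c(i)\alpha_i - \xi$ with $\xi = \sum_{j\in J^c}c(j)\alpha_j \in \mathbb{Z}_{\geq 0}\Pi_{J^c}$, and since $J \supseteq J_V$, the Minkowski formula \eqref{All Minknowski formulas} guarantees $\mu \in \wt_J V - \mathbb{Z}_{\geq 0}(\Delta^+\setminus\Delta_J^+)$; Theorem \ref{Corollary local Weyl group invariance 2}(b) then identifies exactly the slice $\wt_J^\xi V = (\lambda - \xi - \mathbb{Z}_{\geq 0}\Pi_J)\cap \wt V$ with $\wt(V[\xi,J])$, so $\mu \in \wt V[\xi,J]$ for the relevant $\xi$. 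Conversely each $\wt V[\xi,J] \subseteq \wt V$ since $V[\xi,J]$ is a $\mathfrak{g}_J$-submodule of $V$. The disjointness is immediate: the weights in $\wt V[\xi,J]$ all lie in $\lambda - \xi - \mathbb{Z}_{\geq 0}\Pi_J$, and distinct $\xi, \xi' \in \mathbb{Z}_{\geq 0}\Pi_{J^c}$ give disjoint cosets since $\Pi = \Pi_J \sqcup \Pi_{J^c}$ is a linearly independent set, so the $J^c$-component of a weight determines $\xi$ uniquely.

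Next I would handle the second (finer) equality. Fix $\xi \in \mathbb{Z}_{\geq 0}\Pi_{J^c}$; I claim $\wt V[\xi,J] = \bigcup_{H} \wt V[\xi,H,J]$, the union over independent subsets $H$ of $J$ that actually arise (via the recipe in Theorem \ref{Corollary local Weyl group invariance 2}(b)) for some $\mu$ in the slice. The inclusion $\supseteq$ holds because each $V[\xi,H,J]$ is built as $U(\mathfrak{g}_J)$ applied to a nonzero vector of $V$ of weight in $\lambda - \xi - \mathbb{Z}_{\geq 0}\Pi_J$, hence is a highest weight $\mathfrak{g}_J$-submodule whose weights lie inside the slice $\wt_J^\xi V = \wt V[\xi,J]$. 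The inclusion $\subseteq$ is precisely the content of Theorem \ref{Corollary local Weyl group invariance 2}(b): given any $\mu \in \wt_J^\xi V$, the chain construction \eqref{Eqn integrable simple root strings above every weight in general case} produces a top weight $\mu_n$ with $\supp(\lambda - \mu_n)\cap J$ independent, from which the set $H$ and the module $V[\xi,H,J]$ are defined so that $\mu \in \wt V[\xi,H,J]$. Since $\supp(\lambda-\mu_n)\cap J$ is finite and independent, only finitely many such $H$ occur for a given $\xi$ (indeed $H$ ranges over a subcollection of the independent subsets of $J$), giving the stated indexing. The non-emptiness annotation under the brace, $\wt V[\xi,H,J]\neq\emptyset$, follows from \eqref{Defn module V' with J-weight-slice}, which asserts the generating vector is nonzero.

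Assembling: the two equalities above chain together to give \eqref{Slice decomp. for all V}, and specializing to $V = M(\lambda,J')$ with $J = J'$ recovers \eqref{Int. slice. decomp. PVM} (there $J_V = J'$, the slices $V[\xi,J']$ are the integrable $L_{J'}^{\max}(\lambda - \xi)$, and no nontrivial $H$ occurs), while $V = L(\lambda)$ with $J = J_\lambda$ recovers \eqref{wt formula for simples}. The main obstacle in this argument is not in the present theorem at all but is already absorbed into Theorem \ref{Corollary local Weyl group invariance 2}(b) — specifically, verifying that the monomial in \eqref{Defn module V' with J-weight-slice} does not annihilate $v_\lambda$ (this is where the Algorithm \ref{Section 6 Algorithm for enumerations} / Algorithm \ref{Algorithm for enumerations} ordering of $J^c$, the Serre-relation bookkeeping of Lemma \ref{Lemma Serre}, and $\mathfrak{sl}_2$-theory all enter, exactly as in the proof of Proposition \ref{Enumeration proposition- freeness at module level}). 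Granting part (b) as stated, the derivation of the slice decomposition itself is essentially formal, requiring only the careful tracking of the coset decomposition $\mathbb{Z}_{\geq 0}\Pi = \mathbb{Z}_{\geq 0}\Pi_J \oplus \mathbb{Z}_{\geq 0}\Pi_{J^c}$ and the identification of each slice with a highest weight $\mathfrak{g}_J$-submodule.
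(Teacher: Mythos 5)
Your proposal is correct and follows essentially the same route as the paper: the paper states Theorem \ref{theorem J-slice decomp.} as an immediate consequence of Theorem \ref{Corollary local Weyl group invariance 2}(b) (prefaced by ``Assuming the validity of Theorem \ref{Corollary local Weyl group invariance 2}(b), we extend slice formulas...''), with the partition over $\xi$ coming from the $J^c$-component of $\lambda-\mu$ and all the substantive work (non-vanishing of the generating monomials via the enumeration algorithms) deferred to part (b) and Proposition \ref{Enumeration proposition- freeness at module level}. Your additional bookkeeping (disjointness from linear independence of $\Pi$, each $V[\xi,J]$ being a $\mathfrak{g}_J$-submodule, the specializations to parabolic Vermas and simples) matches the paper's intent.
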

  Recall, the slices in \eqref{Int. slice. decomp. PVM} and \eqref{wt formula for simples} are weights of integrable simples over $\mathfrak{g}_J$.
  For arbitrary $V$, slices in \eqref{Slice decomp. for all V} are weights of $\mathfrak{g}_J$-submodules $V[\xi, H, J]$ and $ V[\xi, J]$ of $V$.
  The definitions and even non-vanishing of these $\mathfrak{g}_J$-submodules rely on enumerative Algorithms \ref{Algorithm for enumerations} and \ref{Section 6 Algorithm for enumerations}.
A discussion on their non-vanishing at a foundational level:
\begin{remark}\label{Remark seeking non-vanishijng weight vectors in free-directions for Theorem integrable strings for weights (c')}
   In definition \eqref{Defn module V' with J-weight-slice} of $V[\xi, H, J]$ (any $J_V \subseteq J\subseteq J_{\lambda}$),  we first enumerate $J^c$ in which $v'=\Big(\prod_{j\in J^c} f_j^{c(j)}\Big)\cdot v_{\lambda}\neq 0\in V_{\lambda
   -\xi}$.      
   The next issue is, {\color{black}$\prod_{i\in H}f_i^{[\lambda-\xi](\alpha_i^{\vee})+1}\cdot v''$} may not be non-zero for every $v''\neq 0 \ \in V_{\lambda-\xi}$; (despite) we know those $v''$ are maximal for the $\mathfrak{n}_J^+$-action. 
   Whence: 
   1)~We search for vectors $v''\in V_{\lambda-\xi}$ that are not killed this product. 
   2)~In particular, exploring orderings -- which subsumes Problem~\ref{Problem non-vanishing vectors in free-directions for any V} at a weaker level -- might be interesting in its own right, as writing even an example of such $v''$ is perhaps not straight forward; even though we know $\wt_J V$ contains full-weights $\lambda-\mathbb{Z}_{\geq 0}\Pi_{J^c}$.
     Proposition \ref{Enumeration proposition- freeness at module level} proved in Section \ref{Section 7}, solves this question. 
\end{remark}
\section{Proof of Proposition \ref{Enumeration proposition- freeness at module level}: On weight-multiplicities along free $J_V^c$-directions}\label{Section 7}
Fix any $\big(\mathfrak{g}, \lambda,\  M(\lambda)\twoheadrightarrow V\big)$ as above.
Through-out, $J_V^c\neq \emptyset$, and
we repeatedly make use of:
\begin{equation}\label{Eqn hole-freeness property of JV complement}
\lambda \ - \  \mathbb{Z}_{\geq 0}\Pi_H \ \ \subseteq \  \wt V \qquad \quad \text{for all independent }\ H \subseteq \ J_V^c.
\end{equation}
So, $\wt_{J_V^c}V\ = \ \lambda-\mathbb{Z}_{\geq 0}\Pi_{J_V^c}$, by Theorem \ref{thmB}\eqref{Theorem B V with full weights}.
Our first goal here is to find 
(at least one) enumeration $J_V^c\ = \ \{i_1,\ldots i_n\}$ with $n=|J_V^c|$, for which 
\begin{equation}\label{Section 6 non-vanishing ordering problem}
\hspace*{2cm} f_{i_1}^{c(i_1)}\ \cdots \  f_{i_n}^{c(i_n)}\cdot v_{\lambda}\ \neq 0\ \qquad \qquad \text{ for any }\ c(i_1),\ldots, c(i_n)\in \mathbb{Z}_{\geq 0}.  
\end{equation}
Thereby, we prove all the parts (a), (b), (c$'$) and (c) of Proposition \ref{Enumeration proposition- freeness at module level}. 
We encourage the reader to recall Algorithm \ref{Algorithm for enumerations}, which yields these enumerations in the case $\lambda\in P^+$ \big(for solving Proposition~\ref{Enumeration proposition- freeness at module level} parts (a)--(c$'$)\big).  
In the case with any $\lambda\in \mathfrak{h}^*$ \big(for solving part (c)\big), we propose Algorithm
\ref{Section 6 Algorithm for enumerations} strengthening Algorithm \ref{Algorithm for enumerations}, written down in the proof of proposition \ref{Enumeration proposition- freeness at module level}(c) below. 
\begin{remark}\label{Remark ordering only for support for C(i)s}
               In enumerating $J_V^c$  for result \eqref{Section 6 non-vanishing ordering problem}, we note: (1) It suffices to enumerate only the support $\{i\in J_V^c\ |\  c(i)>0 \}$.
               However working with full $J_V^c$ does not affect the non-vanishing of the vector, since for nodes $i$ outside this support $f_i^{c(i)=0}\ =\ 1\in U(\mathfrak{n}^-)$ conventionally. 
                   (2) Importantly, one can enumerate $J_V^c$ canonically, not using the actual sequences $c(i)$, but only the patterns of ` $\leq$  or $\geq $ ' relations across $c(i)$s \big(for $i$ with positive degree in the Dynkin subgraph on $J_V^c$\big).  
               \end{remark}
Before beginning the proof of Proposition \ref{Enumeration proposition- freeness at module level}, we recall the applications of these enumerations:
\begin{itemize}
\item[(1)] For $\mathfrak{sl}_2$-theory (going-up by $e_i$s) in general $\mathfrak{g}$-modules $V$,  discussed in Remark~\ref{Remark sl2 theory using rasing operators of general V} \big(thereby, application (2) below\big): 
Following \eqref{Section 6 non-vanishing ordering problem}, are there nodes $j_1,\ldots, j_r\in \{i_1,\ldots, i_n\}$ with-
\begin{equation}\label{Eqn going-up by e-is on weight vectors, such orderings}
e_{j_1}\ \cdots \   e_{j_r}\ \cdot \  f_{i_1}^{c(1)}\ \cdots\ f_{i_n}^{c(n)}\cdot v_{\lambda} \ \ \ \  \neq \ \  0\ \in \mathbb{C}v_{\lambda}\ ?   \qquad \big(\text{Which is Problem }\ref{Problem non-vanishing vectors in free-directions for any V}\text{(b)}.\big)    
\end{equation}
\end{itemize}
\begin{remark}
The answer to the question on the nose, is negative; ex: when $n=1$ and $c(1)=\lambda(\alpha_1^{\vee})+1\in \mathbb{N}$, the vector $f_{i_1}^{c(1)}v_{\lambda}$ in \eqref{Eqn going-up by e-is on weight vectors, such orderings} is a maximal vector.
However, there is a more conceptual answer to the question of how high one can go up, which reveals a better picture of weights and weight vectors in $V$.
Namely, our result \eqref{Eqn answer to going up by e-is problem}, a key step in the proof of Proposition \ref{Enumeration proposition- freeness at module level} below.
\end{remark}
\begin{itemize}
\item[(2)] For generalizing Lemma \ref{Lemma freeness at module level} to all $V$; thereby multiplicity-bounds Proposition \ref{Enumeration proposition- freeness at module level}\eqref{Eqn bound of free-directional weight- dimensions}.
\item[(3)] Strengthening Theorem \ref{Corollary local Weyl group invariance 2}(b) (see Remark \ref{Remark seeking non-vanishijng weight vectors in free-directions for Theorem integrable strings for weights (c')}), i.e., for writing submodules $V[\xi, J]$, etc.
\end{itemize}
                 \begin{proof}[{\bf Proof of Proposition \ref{Enumeration proposition- freeness at module level}}]
                    
                    We first show Parts (a) and (b) of the proposition, where-in we fix a weight $\lambda\in P^+$ and $M(\lambda)\twoheadrightarrow V$ with $|J_V^c|=n>0$.\\
                    \smallskip
                    \underline{Proof of Part (a)}: 
                    We fix $c(i)\in \mathbb{Z}_{\geq 0}$ $\forall$ $i\in J_V^c$, and correspondingly an enumeration $J_V^c = \{i_1,\ldots, i_n\}$ by Algorithm \ref{Algorithm for enumerations}.  
                    Let
$k$ be the least stage with $\{i_{k+1}, \ldots, i_n\}$ independent as there.
                  By this, proceeding via \eqref{Eqn going-up by e-is on weight vectors, such orderings}, we show for the non-vanishing property \eqref{Section 6 non-vanishing ordering problem}:                     \begin{equation}\label{Eqn Proposition (a) proof non-vanishing claim}
                    e_{i_k}^{c(i_k)}\ \cdots\ e_{i_1}^{c(i_1)} \ \ \cdot  \  \  \Big(f_{i_1}^{c(i_1)}\ \cdots \ f_{i_n}^{c(i_n)}\  \cdot \ v_{\lambda}\Big)\ \ \neq \ 0 \ \ \in \  \mathbb{C}\Big\{ \   f_{i_{k+1}}^{c(i_{k+1})}\ \cdots \ f_{i_n}^{c(i_n)}\cdot v_{\lambda} \  \Big\}, 
                    \end{equation}
                    via inducting on full height $\sum_{i\in J_V^c}c(i)\geq 0$.
(Note the reversal in the orderings of $e_i$s and $f_is$ above.\big)
Base step: $c(i)=0$ $\forall$ $i$, so there is nothing to prove. \\
Induction step: 
We assume that some $c(i)>0$. 
In view of Remark \ref{Remark ordering only for support for C(i)s}, for simplicity, we assume $c(i)>0$ for all $i\in J_V^c$.
          To begin with, if $J_V^c$ is independent ($k=0$), then we only have commuting $f_i$s and no $e_i$s in \eqref{Eqn Proposition (a) proof non-vanishing claim}.
          So setting $F_1=\prod_{i\in J_V^c}f_i^{c(i)}$ (multiplied in any order) shows the result $F_1\cdot v_{\lambda}\neq 0$, by property \eqref{Eqn hole-freeness property of JV complement} of $J_V^c$.
        Henceforth, $J_V^c$ is not independent, i.e. $k\geq 1$.
        Next, we fix $i_1$ as in Algorithm~\ref{Algorithm for enumerations} Step 1. 
        Notice, the (sub-)enumeration $\{i_2,\ldots, i_n\} = J_V^c\setminus \{i_1\}$  also satisfies the rules of our algorithms \big(if we start with $c'(i_t)\ :=\  c(i_t)$ $\forall$ $t\geq 2$ and $c'(i_1)=0$\big); and let us set for convenience $x = \Big(f_{i_2}^{c(i_2)}\ \cdots \ f_{i_n}^{c(i_n)}\Big)  \cdot v_{\lambda}$.
        Now by the induction hypothesis 
        \begin{equation}\label{Eqn Proposition (a) proof  induction step non-vanishing vector}
               e_{i_k}^{c(i_k)}\ \cdots\ e_{i_2}^{c(i_2)} \ \cdot \ x \ \  \neq \ 0 \ \  \in\ \mathbb{C}\{ \   f_{i_{k+1}}^{c(i_{k+1})}\ \cdots \ f_{i_n}^{c(i_n)}\cdot v_{\lambda} \  \}. 
               \end{equation}
               
               Recall $x$ is a maximal vector for $e_{i_1}$-action, and moreover $\big\langle \lambda-\sum_{t=2}^n c(i_t)\alpha_{i_t},\  \alpha_{i_1}^{\vee}\big\rangle\  \geq\  -c(i')\langle \alpha_{i'},\ \alpha_{i_1}^{\vee}\rangle $ $\geq c(i_1)$, where $i'$ is the node adjacent to $i_1$ with $c(i')\geq c(i_1)$ that we consider in Step 1 of Algorithm~\ref{Algorithm for enumerations}.
                   The below calculation by  $\mathfrak{sl}_2$-theory and  \eqref{Eqn Proposition (a) proof  induction step non-vanishing vector}  together finish the proof of \eqref{Eqn Proposition (a) proof non-vanishing claim}-
                    \begin{equation}\label{Eqn Proof of main proposition (b) sl2 calculation}
                    e_{i_1}^{c(i_1)}\cdot f_{i_1}^{c(i_1)}\cdot x \quad =\quad  \bigg[\ \prod_{k=1}^{c(i_1)}k\bigg( \Big\langle \lambda-\sum_{t=2}^n c(i_t)\alpha_{i_t}, \ \alpha_{i_1}^{\vee}\Big\rangle\ -\big(k-1\big) \bigg)\ \bigg] \ x \ \ \neq 0.
                    \end{equation}
                    \smallskip\\
    \underline{Proof of Part (b)}:
    $\big(\lambda, V ,n \big)$, and $\mu=\lambda-\sum_{i\in J_V^c}c(i)\alpha_i\in \wt V$ be as above and fix vector $y\ =\  f_{i_1}^{c(i_1)} \cdots f_{i_n}^{c(i_n)}\cdot v_{\lambda}\neq 0\ \in V_{\mu}$.
    We now focus on $f_{i_{k+1}}^{c(i_{k+1})}\cdots f_{i_n}^{c(i_n)}\cdot v_{\lambda}$ ($f_i$s commute).
                Observe by $\mathfrak{sl}_2$-theory, for $H:=\big\{ 
i_t \ |\  t\geq k+1 \text{ and }c(i_t)\geq \lambda(\alpha_{i_t}^{\vee})+1\big\}$ and $d(i):=
                \begin{cases}
                
                    c(i)-\lambda(\alpha_i^{\vee})-1 \   &\text{if } i\in H\\
                    c(i) \   & \text{if } i\in J_V^c\setminus H
                \end{cases}\ :$ 
                \begin{equation}            e_{i_{k+1}}^{d(i_{k+1})}\ \cdots \ e_{i_n}^{d(i_n)}\ \cdot\  f_{i_{k+1}}^{c(i_{k+1})}\ \cdots\  f_{i_n}^{c(i_n)}\ \cdot\  v_{\lambda} \ \ \neq 0 \ \  \in \  \mathbb{C}\bigg\{\bigg(\prod_{i\in H}f_i^{\lambda(\alpha_i^{\vee})+1}\bigg) \cdot v_{\lambda}\bigg\}.   
                \end{equation}
                One verifies this similar to \eqref{Eqn Proof of main proposition (b) sl2 calculation}.
                Moreover, putting together this property and \eqref{Eqn Proposition (a) proof non-vanishing claim} yields    \begin{equation}\label{Eqn answer to going up by e-is problem}    
            e_{i_n}^{d(i_n)}\ \cdots \ e_{i_1}^{d(i_1)}\ \cdot\  y \ \ \neq 0 \ \  \in \  \mathbb{C}\bigg\{\bigg(\prod_{i\in H}f_i^{\lambda(\alpha_i^{\vee})+1}\bigg) \cdot v_{\lambda}\bigg\}. 
                \end{equation}

               Our goal is recollected in \eqref{Eqn goal in proof of part (b)} below.
               Here, we collectively work with all enumerations by Algorithm \ref{Algorithm for enumerations}.
               In them, it is in view of the above relations, that we fixed in the statement of Part (b) distinct subsets $H_1,\ldots, H_s$ \big(given $\mu$\big) with the important property $c(i) > \lambda(\alpha_i^{\vee})$ $\forall$ $i\in H_1\cup \cdots \cup H_s$.
                We define for convenience $w_{H_t}\ :=\ \prod_{i\in H_t} s_i$.
                 The lower bound \eqref{Eqn bound of free-directional weight- dimensions}, which is
                 \begin{equation}\label{Eqn goal in proof of part (b)}
   \qquad \quad  \dim\big( V_{\mu} \big) \ \ \geq \ \ \sum_{t=1}^s\dim\left[ L\big( w_{H_t}\bullet \lambda \big)_{{\large\mu}} \right]\qquad \quad \big(\text{every summand is positive}\big),
\end{equation}
                is easily verified by showing the steps : (1) $\mu\in \wt L\big(w_{H_t}\bullet \lambda\big)$ $\forall$ $1\leq t\leq s$, and (2) the inclusion - 
                 \begin{equation}\label{Eqn direct sum of non-hole simples embedding into V}
                 \qquad \Bigg(\bigoplus\limits_{i=1}^s\  L\big( w_{H_t} \  \bullet \lambda \big)\Bigg)_{\eta}\quad \large\xhookrightarrow{ }\quad V_{\eta} \quad  \forall\ \eta \quad \big(\text{at vector space level, loosely speaking}\big).
                 \end{equation}
                 Step (1) easily follows by \eqref{Eqn Proof of main proposition (b) sl2 calculation}, in view of the property of simples $L(\lambda') = \frac{M(\lambda')}{N(\lambda')}$:\ \ \ For $x\in M(\lambda')$-
                 \[
x\neq 0 \ \text{in}\  L(\lambda')\ \ \Big(\text{i.e., }\ x\in M(\lambda')\setminus N(\lambda')\Big)\qquad \iff \quad  m_{\lambda'}\ \in  \ U(\mathfrak{n}^+)\cdot x.
\]
To see this equivalence, let $X:=U(\mathfrak{n}^+)x\  \supset\  \{x\}\ \neq 0$.
$U(\mathfrak{n}^-)X$ is a submodule of $M(\lambda')$ by the triangular decomposition of $\mathfrak{g}$.
Further, $\big(U(\mathfrak{n}^-)X\big)_{\lambda'}\subseteq X_{\lambda'}$, since $\wt\big(U(\mathfrak{n}^-)X\big) \preceq \lambda'$.
If $x\neq 0 \text{ in }L(\lambda')$ and $X_{\lambda'}=0$, then $U(\mathfrak{n}^-)X\neq 0$ leads to a proper submodule in $L(\lambda')$ $\Rightarrow\!\Leftarrow$.
So
\begin{equation}\label{Eqn raising property of simples for proposition (b)}
\text{ given } \ v\ \neq\ 0 \ \in \ L(\lambda'), \qquad\quad  e_{j_1}\ \cdots \ e_{j_r} \ \cdot \ v \ \neq 0 \ \  \in L(\lambda)_{\lambda} \quad \text{ for some }\ \ j_1,\ldots, j_r\ \in\ \mathcal{I}.
\end{equation}
Conversely, if $X_{\lambda'}\neq 0$, then $U(\mathfrak{n}^-)X = M(\lambda')$ by definitions, and so $x\neq 0\text{ in } L(\lambda')$.

For step (2), we begin by applying Lemma \ref{Lemma local composition series for V} for the pair \big($V$ in Category $\mathcal{O}$, \ $w_{H_1}\bullet \lambda\in \wt V$\big), for a filtration $V_{m_1}\xhookrightarrow{} \cdots\xhookrightarrow{} V\twoheadrightarrow L(\lambda)$ with $\frac{V_{m_1-1}}{V_{m_1}}\simeq L\big(w_{H_1}\bullet \lambda\big)$; here $w_{H_1}\bullet \lambda$ must be top weight in simple $\frac{V_{m_1-1}}{V_{m_1}}$ since $V_{w_{H_1}\bullet \lambda}$ is maximal and 1-dim.  
                 We are done if the sub-quotients here include all the other $L\big(w_{H_t}\bullet \lambda\big)$s (covering $s=1$ case). 
                Else, we iteratively repeat this procedure for \big($V_{m_1}\in\mathcal{O}$, \ $w_{H_r}\bullet \lambda \in \wt V_{m_1}$\big) for every $2\leq r\leq s$. 
                This yields a filtration $V_{m_s}\xhookrightarrow{} \cdots\xhookrightarrow{} V_{m_r}\xhookrightarrow{}\cdots \xhookrightarrow{} V\twoheadrightarrow L(\lambda)$, where-in the sub-quotients include all the simples $L\big(w_{H_1}\bullet \lambda\big), \ldots, L\big( 
w_{H_s}\bullet \lambda \big)$, as desired, completing the proof of Part (b).
Part (c$'$) follows by definitions and the above proofs.
\medskip\\
    \underline{Proof of Part (c)}:
    We extend and show parts (a) and (b), for any $\big(\lambda, M(\lambda)\twoheadrightarrow V\big)$ and $\mu=\lambda-\sum_{i\in J_V^c}c(i)\alpha_i\in \wt_{J_V^c}V$, using the enumerations $J_V^c= \{j_1,\ldots, j_n\}$ in Algorithm \ref{Section 6 Algorithm for enumerations} :
     \begin{algorithm}\label{Section 6 Algorithm for enumerations}
          Fix any$\big(\mathfrak{g},  \lambda, M(\lambda)\twoheadrightarrow V\big)$, with $n=|J_V^c|>0$, and numbers $c(i)\in \mathbb{Z}_{\geq 0}$ $\forall\ i \in J_V^c$. 
           \begin{itemize}
           \item[(1)] To begin with, if $J_V^c\subseteq J_{\lambda}^c$, then any enumerations for $J_V^c$ works for our purpose by Lemma~\ref{Lemma freeness at module level}. 
          So we assume that $0<m =|J_{\lambda}\cap J_V^c|\leq n$, and write (in any order) 
          \[
          J_{\lambda}^c\cap J_V^c\ = \ \{ j_{m+1}, \ldots, j_n\}.
          \]
          \item[(2)] The below steps are for enumerations $J_{\lambda}\cap J_V^c=\{j_1,\ldots, j_m\}$.
          Observe, the input(s) in Algorithm \ref{Algorithm for enumerations} is only  the sequence $c(i)\in \mathbb{Z}_{\geq 0}$ $\forall$ $i\in I=J_V^c$ (and the Dynkin subgraph structure on $I$), and does not involve our assumption $\lambda\in P^+$ there-in.
           So, we can enumerate in this way (for our applications) any  subset $I\subseteq \mathcal{I}$ using any sequence $c: I\longrightarrow \mathbb{Z}_{\geq 0}$. 
           \item[(3)] By point (2) and Algorithm \ref{Algorithm for enumerations}, we arbitrarily enumerate for any $\lambda$ here $I=J_{\lambda}\cap  J_V^c\ = \ \{i_1,\ldots, i_m\}$;
           a first approximation to $\{j_1,\ldots, j_m\}$ desired. 
           Let $k\leq m$ be the least stage at which $H'=\{i_{k+1}, \ldots, i_m\}\ = J_{\lambda}\cap J_V^c\setminus \{i_1,\ldots, i_k\}$ is independent.
           We consider
           \[
            \hspace*{1.5cm} H:=\bigg\{ i\in H'\ \Big|\ \     c(i)>\lambda(\alpha_i^{\vee})- \Big[ c(i_{m+1})\alpha_{i_{m+1}}+\cdots +c(i_n)\alpha_{i_n}\Big]\big(\alpha_i^{\vee}\big)\  \in \mathbb{Z}_{\geq 0}
 \bigg\}\ \ \  \subseteq H'\cap J_{\lambda}.
 \]
          \item[(4)] Now enumerate $H'\setminus H =\{j_{k+1},\ldots, j_r\}$ and $H= \{j_{r+1},\ldots, j_m\}$, for $k+1\leq r\leq m$, and finally set $j_t=i_t$ $\forall$ $1\leq t\leq k$.
                                 \end{itemize}
                               \end{algorithm}
                             
We fix ordering $J_V^c=\{j_1,\dots, j_n\}$, independent set $H$ (possibly $\emptyset$) and stages $m, k, r\leq n$, as in the above algorithm.
Now $x\ = \ f_{j_1}^{c(j_1)}\ \cdots\ f_{j_n}^{c(j_n)}\ v_{\lambda} \neq 0 $, follows by the non-vanishing property:
 \begin{align}\label{Eqn non-vanishing 1 part (c)}
              \begin{aligned}[]      0\neq &  \ \overbrace{ e_{j_{n}}^{c(j_{n})} \  \cdots \  e_{j_{m+1}}^{c(j_{m+1})}}^{J_{\lambda}^c\cap J_V^c}\ \cdot\   \overbrace{e_{j_{r}}^{c(j_r)}  \cdots\ e_{j_{k+1}}^{c(j_{k+1})}}^{H'\setminus H}\ \cdot\ 
                    \overbrace{e_{j_k}^{c(j_k)}\ \cdots\  e_{j_1}^{c(j_1)}}^{J_{\lambda}} \ \ \cdot  \  \  \Big(f_{j_1}^{c(j_1)}\ \cdots \ f_{j_n}^{c(j_n)}\  \cdot \ v_{\lambda}\Big) \\
                       & \in\  e_{j_{n}}^{c(j_{n})}  \cdot\cdot \  e_{j_{m+1}}^{c(j_{m+1})}\ \cdot\       \mathbb{C}\Big\{ \   f_{j_{r+1}}^{c(j_{r+1})} \cdot\cdot \ f_{j_n}^{c(j_n)}\cdot v_{\lambda} \  \Big\}\               
                    \   =  \  \mathbb{C}\Big\{ \   \underbrace{f_{j_{r+1}}^{c(j_{r+1})} \cdot\cdot \ f_{j_m}^{c(j_m)}}_{H}\cdot v_{\lambda} \  \Big\}.
    \end{aligned}
    \end{align}
    This can be proved following the steps in the proof of part (a),  and using Lemma~\ref{Lemma freeness at module level} and  the definition of $H$.
    To begin with, note for $i\in H'\setminus H$ that $c(i)\leq \lambda(\alpha_i^{\vee})-\sum_{t=m+1}^n c(i_t)\alpha_{i_t}\big(\alpha_i^{\vee}\big) \in \mathbb{Z}_{\geq 0}$ by the definition of $H$, and so as in \eqref{Eqn Proof of main proposition (b) sl2 calculation} $e_i^{c(i)}\cdot f_i^{c(i)}\cdot v_{\lambda}\neq 0$.
Next, as in the proof of part (b) above,
\begin{equation}\label{Eqn non-vanishing 2 in part (c)}
e_{j_{r+1}}^{c(j_{r+1}) -  \lambda(\alpha_{j_{r+1}}^{\vee})-1}\ \cdots \ e_{j_m}^{c(j_m)-\lambda(\alpha_{j_m}^{\vee})-1}  \ \cdot \  f_{j_{r+1}}^{c(j_{r+1})}\ \cdots \ f_{j_m}^{c(j_m)}\cdot v_{\lambda}\ \ \in\ \mathbb{C}\bigg\{ \prod_{h\in H} f_{h}^{\lambda(\alpha_h^{\vee})+1}\ \cdot v_{\lambda} \bigg\}\ \setminus \{0\}.
\end{equation}
We set $v_1\ = \prod_{h\in H} f_{h}^{\lambda(\alpha_h^{\vee})+1}\ \cdot v_{\lambda}$ and consider the highest weight $\mathfrak{g}$-module $V_1\ = \ U(\mathfrak{n}^-)\cdot v_1$.
By the definition (and independence) of $H$, Lemma \ref{Lemma Serre} forces $x$ to belong to $\mathfrak{g}$-submodule $V_1\subset V$:
\begin{align}
\begin{aligned}[]
x \ & =  \ f_{j_1}^{c(j_1)}\cdots f_{j_r}^{c(j_r)}\ \cdot\  \bigg(\prod_{h\in H} f_{h}^{c(h)}\bigg) \cdot \Big( f_{j_{m+1}}^{c(j_{m+1})}\cdots f_{j_n}^{c(j_n)} \Big)\cdot v_{\lambda}\\
&\in 
f_{j_1}^{c(j_1)}\cdot\cdot f_{j_r}^{c(j_r)} \cdot\ U(\mathfrak{n}^-) \cdot \prod_{h\in H} f_h^{c(h)+\sum_{t=m+1}^n c(i_t)\alpha_{i_t}(\alpha_{h}^{\vee})}\cdot v_{\lambda}\ \in\  U(\mathfrak{n}^-)\cdot\prod_{h\in H} f_{h}^{\lambda(\alpha_h^{\vee})+1} \cdot v_{\lambda}. 
\end{aligned}
\end{align}
By this and by $v_1 \ \in U(\mathfrak{n}^+)\cdot x$ \big(via \eqref{Eqn non-vanishing 1 part (c)} and \eqref{Eqn non-vanishing 2 in part (c)}\big), observe as in the proof of part (b) Step (1) above,  $\mu\ = \ \lambda- \sum_{t=1}^n c(j_t)\alpha_{j_t}\ \in \wt L\bigg(\prod_{h\in H}s_h\ \bullet \lambda\bigg)\subseteq \wt V_1$.
The proof for part (c) lower bounds \eqref{Eqn goal in proof of part (b)} in this general case, is similar to that in part (b), completing the proof of the proposition.
          \end{proof}
We conclude the paper proving Theorem \ref{Corollary local Weyl group invariance 2}(b), using the above tools for ``going-up''.
\begin{proof}[{\bf Proof of Theorem \ref{Corollary local Weyl group invariance 2} (b)}]
The notations in the theorem and some simplifications to proceed:  
\begin{itemize}
\item Fix any $\mathfrak{g},\ \lambda\in\mathfrak{h}^*,\ M(\lambda)\twoheadrightarrow V,\  \xi=\sum\limits_{i\in J^c}c_i\alpha_i \in \mathbb{Z}_{\geq 0}\Pi_{J^c}$. 
We work with (any) $ J_V\subseteq J \subseteq J_{\lambda}$. 
\item Fix $\mu\ = \ \lambda - \sum\limits_{i\in \mathcal{I}}c(i)\alpha_i \ \in \wt V$ and the corresponding chain $\mu_0=\mu \prec \cdots \prec \ \mu_n\ = \ \lambda-\xi- \sum\limits_{i\in H'}d_i\alpha_i$, with $\supp(\mu_t-\mu_{t-1})=\{i_t\}$ for $1\leq t\leq n$ and $H'=\supp(\lambda-\mu_n)\cap J$ independent.
\big(Note $n=0$ if $\supp(\lambda-\mu)\cap J$ is independent to begin with.\big)
\item Recall, $H\ :=\ \big\{ j\in H'\cap J\ |\  d_j\ \geq\  \lambda(\alpha_j^{\vee})- \xi(\alpha_j^{\vee})+1 >0  \big\} \ \subseteq \ J$.
\item To prove result \eqref{Eqn non vanishing result in Step 1 in Theorem 6.5(b) proof}, we assume \big(or construct by repeatedly applying \eqref{Eqn local Weyl group action  on weights}\big) $\mu_t$ to be maximal in the string $\mu_{t-1}+\mathbb{Z}\alpha_{i_t}$ \big(so $e_{i_t}V_{\mu_t}=0$\big) and property \eqref{Eqn integrable simple root strings above every weight in general case} $\forall$ $t$.
Despite the unclear case \cite[Lemma 4.7(c)]{WFHWMRS}, $\mu_t$ are the indeed top weights of $\mu_t+\mathbb{Z}\alpha_{i_t}$, as $\mu_t\big(\alpha_{i_t}^{\vee}\big)>0$.
\end{itemize}
We fix throughout below, an ordering $J^c\ = \ \big\{j_1,\ldots, j_N\big\}$ by Algorithm \ref{Section 6 Algorithm for enumerations} and let the  independent set there-in be $K=\big\{j_{R+1},\ldots, j_M\big\}$ for $R\leq M\leq N$ with $\big\{j_{M+1},\ldots, j_N\big\}\subseteq \ J^c\cap J_{\lambda}^c$. 
Note the re-labellings: independent set \& stages $\big(H, r, m, n\big)$ in Algorithm \ref{Section 6 Algorithm for enumerations}(3) \ $\rightsquigarrow$\ $\big(K,R,M, N\big)$ here. 
We work with two independent sets $H\subseteq \supp(\lambda-\mu_n)\cap J$ above and $K$ in the enumeration of $J^c$.  
\[
\text{Note, }\ \ c(j)\ = \ d_j \ +\  \height_{\{j\}}(\mu_n-\mu)\quad \forall \ j\in J.
\]
Let us enumerate for convenience $H= \big\{ h_1,\ldots, h_m\big\}\subseteq J$ for $m=|H|\geq 0$.
We set 
\[
  x\ :=  \    \overbrace{\Big(f_{i_1}^{\height(\mu_1-\mu_0)}\ \cdot\cdot \ f_{i_n}^{\height(\mu_n-\mu_{n-1})}\Big) \bigg( \prod_{j\in H'\setminus H}f_j^{d_j}\bigg)     \Big( f_{h_1}^{d_{h_1}}\ \cdot\cdot  \ f_{h_m}^{d_{h_m}} \Big)}^{J} \  \times  \overbrace{\Big( f_{j_1}^{c(j_1)}\ \cdot\cdot  \ f_{j_N}^{c(j_N)}\Big)}^{\xi \ (\text{ in } J^c)}\cdot v_{\lambda}\ \in V_{\mu}.
 \]
Observe $x\neq 0$ in $V_{\mu}$ implies: i) $x$ (by its form) survives in $V[\xi, J]\ :=\ U(\mathfrak{n}_J^-)\cdot \Big(f_{j_1}^{c({j_1})}\ \cdots\ f_{j_N}^{c({j_N})}\Big)\cdot v_{\lambda}$, and then
 ii) $0\neq \bigg(\prod\limits_{t=1}^m f_{h_t}^{d_{h_t}} \bigg)\cdot \Big(f_{j_1}^{c({j_1})}\ \cdots\ f_{j_N}^{c({j_N})}\Big)\cdot v_{\lambda}\ \in  U(\mathfrak{n}_J^-) \cdot {\color{black}\bigg(\prod\limits_{i\in H} f_i^{[\lambda-\xi](\alpha_i^{\vee})+1} \bigg)}\cdot \Big(f_{j_1}^{c({j_1})}\ \cdots\ f_{j_N}^{c({j_N})}\Big)\cdot v_{\lambda}\ =: V[\xi, H J]$ \big(which can be proved by the generalization of identity \eqref{ESerre2} in Step 2 below in the proof\big) implies $x\in V[\xi, H, J]$.
So, in the reminder of the proof, we assume on the contrary $x=0$ and exhibit contradiction to the existence of $\mu_n\in \wt V$ in Property \eqref{Eqn integrable simple root strings above every weight in general case}. 
\smallskip\\
\underline{Step 1}: We observe by the calculations/verification similar to those in \eqref{Eqn non-vanishing 1 part (c)}, that :
\begin{align}\label{Eqn non vanishing result in Step 1 in Theorem 6.5(b) proof}
\begin{aligned} &\quad y := \ \Big( e_{j_N}^{c(j_N)} \cdot\cdot \ \Big(\underbrace{\cancel{e_{j_M}^{c(j_M)}}\cdot\cdot\ \cancel{e_{j_{R+1}}^{c(j_{R+1})}}}_K\Big)\cdot\cdot \ e_{j_1}^{c(j_1)}\Big) \cdot \bigg( \prod_{j\in H'\setminus H}e_j^{d_j}\bigg)    \cdot  \Big(\ e_{i_n}^{\height(\mu_n-\mu_{n-1})}\cdots\\
&\cdot\cdot\  e_{i_1}^{\height(\mu_1-\mu_0)}\Big)  \cdot x \ \ \   \in \Big(\mathbb{C}\setminus \{0\}\Big)\bigg\{\ z\ :=\ \Big( \underbrace{f_{h_1}^{d_{h_1}} \cdot\cdot \ f_{h_m}^{d_{h_m}}}_{H} \Big)\ \cdot \ \Big( \underbrace{f_{j_{R+1}}^{c(j_{R+1})} \cdot\cdot \ f_{j_M}^{c(j_M)}}_{K}\Big)\ \cdot \ v_{\lambda}\ \bigg\}.
\end{aligned}
\end{align}
Clearly the vanishing of $x$ forces $y = z =0$.
Our strategy now is to move $f_{h_t}$s past $f_{j_t}$s, and show $z=0$ for the desired contradiction.
We begin by noting that $f_{h_1},\ldots, f_{h_m}$ commute, similarly $f_{j_{R+1}},\ldots, f_{j_M}$ commute.
Given a sequence $\bar{b}=\big( b(1), \ldots, b(m)\big)\in (\mathbb{Z}_{\geq 0})^{m}$ and integer $R+1\leq k\leq M$, recall by $\mathfrak{sl}_2^{\oplus m}$-theory: i) $\beta(\bar{b}, k):= \alpha_{j_k}+\sum_{t=1}^m b(t)\alpha_{h_t}$ is a root of $\mathfrak{g}$ iff $0\leq b(t)\leq -\alpha_{j_k}\big(\alpha_{h_t}^{\vee}\big)$ for all $1\leq t\leq m$; ii) moreover, $\mathfrak{g}_{-\beta\big(\bar{b}, k\big)}$ is 1-dim. and is spanned by the Lie word 
\[
F\big(\bar{b}, k\big)
\ := \ \Big[\ \ad_{f_{h_1}}^{\circ b(1)}\ \circ\ \cdots\  \circ\  \ad_{f_{h_m}}^{\circ b(m)}\ \Big]\ \big(f_{j_k}\big).
\]
When $K\sqcup H$ is independent, note $F\big(\bar{b}, k\big)=0$ $\forall$ $\bar{b}\neq 0$. 
So, we set $K'=\big\{j_k\ \big|\ \{j_k\}\sqcup H \text{ is independent} \big\}$ $\subseteq K$; and say $K'=\big\{j_{R'},\ldots j_M\big\}$ for $R\leq R'\leq M$.
In the below lines of the proof, we assume $K'=\emptyset$ for simplicity.
While $K'\neq \emptyset$, the results in Steps 1 and 2 can be showed with $v_{\lambda}$ replaced by $f_{j_{R'}}^{c(j_{R'})}\cdots f_{j_M}^{c(j_M)}v_{\lambda}$ there-in, and the procedure for Step 3 is explained at the end of the proof. 
As $f_{h_1},\ldots, f_{h_m}$ commute, for $F\big(\bar{b}, k\big)\neq 0$, by $\mathfrak{sl}_2$-theory importantly:
\begin{align}\label{Eqn clearing j-ks for contradiction in Theorem 6.5 proof} 
\begin{aligned}[c]
\big[e_{j_k},\ F\big(\bar{b}, k\big)\big]\  \neq  0  & \ \iff \ \ \big[e_{j_k},\ F\big(\bar{b}, k\big)\big] \ = \  \underbrace{-\alpha_{h_t}\big(\alpha_{j_k}^{\vee}\big)}_{>0}f_{h_t} \ \  \text{for some }1\leq t\leq m \\  \iff \ \  & b(t) =1 \ \text{ for some }\ t\in \{1, \ldots, m\} \ \ \text{ and }\  \ \  b(t')=0 \ \ \forall\ \  t'\neq t.
\end{aligned}
\end{align}
\[
\text{Recall,}\ \ \   d_j \ \geq \ \lambda(\alpha_j^{\vee})-\xi(\alpha_j^{\vee})+1\ \geq\ \lambda(\alpha_j) +1  \ - \ \sum_{k=R+1}^M c(j_k)\alpha_{j_k}\big(\alpha_j^{\vee}\big) \ \ \forall\ j\in H \quad (\text{by definitions}).
\]
\underline{Step 2}: The derivation actions of $f_{h_t}$s on commuting $ \underbrace{f_{j_{R+1}}\cdots f_{j_{R+1}}}_{c(j_{R+1})\text{-times}}$ $\cdots  \underbrace{f_{j_M}\cdots f_{j_M}}_{c(j_M)\text{-times}} $ in $z$ yields a higher rank version of identity \eqref{ESerre2} \big(which was for single $f_i$\big) in the proof of Lemma \ref{Lemma Serre} we need now:
Namely, observe $z$ is a sum of $\mathbb{Z}_{>0}$-multiples -- which are products of binomial coefficients \big(as in \eqref{ESerre2}\big) corresponding to partitions $\sum_{t,j,k}b_j^k(t)$ of integer $\sum_{t=1}^m d_{h_t}$ below -- of products of $\big(c(j_{R+1})+\cdots+ c(j_M)\big)$-many Lie words of the below form-
\begin{align}\label{Eqn Lie words summing to z}
\begin{aligned}
    \Big( F\big(\bar{b}^{R+1}_1, R+1\big)\ \cdot\cdot \ F\big(\bar{b}^{R+1}_{c(j_{R+1})}, R+1\big) \Big) &\   \cdots  \Big( F\big(\bar{b}^{k}_1, k\big)\ \cdot\cdot \ F\big(\bar{b}^{k}_{c(j_k)}, k\big) \Big) \ \times \\
    \ \cdots\  \times & \Big( F\big(\bar{b}^{M}_1, M\big) \ \cdot\cdot \   F\big(\bar{b}^{M}_{c(j_M)}, M\big) \Big) \ \times \  \prod_{t=1}^m f_{h_t}^{d'_{h_t}} \ \cdot \ v_{\lambda} , 
\end{aligned}
\end{align}
where integer sequences $\bar{b}^k_j\ = \ \big(\bar{b}^k_j(1), \ldots, \bar{b}^k_j(m) \big)$ are dominated by $\Big(\big|\alpha_{j_k}\big(\alpha_{h_1}^{\vee}\big)\big| \  ,\ldots,\  \big|\alpha_{j_k}\big(\alpha_{h_m}^{\vee}\big)\big| \Big)$ entry-wise so that $F\big(\bar{b}^k_j, k\big)\ \neq 0$. 
Thus, $d'_{h_t}\geq \lambda(\alpha_{h_t}^{\vee})+1\in \mathbb{Z}_{>0}$. 
Now observe $ e_{j_{R+1}}^{c(j_{R+1})} \times \cdots \times  e_{j_M}^{c(j_M)}\ \cdot\ z $ \big($e_j$s commute here\big) is a $\mathbb{Z}_{> 0}$-linear combination of the below products of Lie words acting on $v_{\lambda}$: \\
\begin{align}\label{Eqn Lie words summing to e's acting on z}
\begin{aligned}
    &\Big( \big[e_{j_{R+1}}, \ F\big(\bar{b}^{R+1}_1, R+1\big)\big]\  \cdot\cdot \ \big[e_{j_{R+1}}, \ F\big(\bar{b}^{R+1}_{c(j_{R+1})}, R+1\big)\big] \Big)  \times   \cdot\cdot \times    \Big( \big[e_{j_k},\ F\big(\bar{b}^{k}_1, k\big)\big]\ \cdot\cdot \\
     \cdot \cdot  \big[ e_{j_k}, & \ F\big(\bar{b}^{k}_{c(j_k)}, k\big)\big] \Big) 
   \times  \cdot\cdot \times  \Big( \big[e_{j_M}, \ F\big(\bar{b}^{M}_1, M\big)\big] \ \cdot\cdot \   \big[e_{j_M},\ F\big(\bar{b}^{M}_{c(j_M)}, M\big)\big] \Big) \ \times \  \prod_{t=1}^m f_{h_t}^{d'_{h_t}}\ \cdot\  v_{\lambda}, \end{aligned}
\end{align}
For a proof of this result, below we show how all $f_{j_{R+1}}$s can be cleared proceeding similar to in the proof of \eqref{ESerre2}, but in the opposite direction in a sense; and repeating this step iteratively for other $f_{j_k}$s yields the above property: acting $e_{j_{R+1}}^{c(j_{R+1})}\cdots e_{j_{M}}^{c(j_{M})}$ on the vector in \eqref{Eqn Lie words summing to e's acting on z} yields
\[
e_{j_{R+1}}^{c(j_{R+1})}\cdots e_{j_{M}}^{c(j_{M})}  \ \times  \  \Big( F\big(\bar{b}^{R+1}_1, R+1\big)\ \cdots \ F\big(\bar{b}^{R+1}_{c(j_{R+1})}, R+1\big) \Big)  \ *\ *\ *\hspace*{1.5cm}  \]
\[ \hspace*{1.5cm}  =\ \
e_{j_{R+1}}^{c(j_{R+1})}\ \times \  \Big( F\big(\bar{b}^{R+1}_1, R+1\big)\ \cdots \ F\big(\bar{b}^{R+1}_{c(j_{R+1})}, R+1\big) \Big) \times\  \Big(e_{j_{R+2}}^{c(j_{R+2})}  \cdots e_{j_{M}}^{c(j_{M})}\Big) \ *\ *\ * \ . 
\]
 The latter is a $\mathbb{Z}_{>0}$-linear combination \big(involving binomial coefficients as in \eqref{ESerre2}\big) of vectors - 
\begin{align}\label{Eqn non-vanishing for F(b, R+1)s and ***}
\begin{aligned}
\ad_{e_{j_{R+1}}}^{\circ p(1)}\Big( F\big(\bar{b}^{R+1}_1, R+1\big)\Big)\ \cdots&  \ \ad_{e_{j_{R+1}}}^{\circ p(R+1)}\Big( F\big(\bar{b}^{R+1}_{c(j_{R+1})}, R+1\big) \Big) \ \times e_{j_{R+1}}^{c(j_{R+1})-p(1)-\cdot\cdot- p(R+1) }\\
& \ \times\  \Big(e_{j_{R+2}}^{c(j_{R+2})}  \cdots e_{j_{M}}^{c(j_{M})}\Big) \ *\ *\ *\ \ ,
\end{aligned}
\end{align}
where-in $p(1), \ldots, p(R+1)\in \mathbb{Z}_{\geq 0}$ and $p(1)+\cdots + p(R+1)\leq c(j_{R+1})$.
Importantly, the above vector for the sequence $\big(p(1),\ldots, p(R+1)\big)$ is non-zero if and only if $p(t)=1$ $\forall$ $t$, because:
1) $f_{j_{R+1}}$ occurs exactly once in $F\big(\bar{b}^{R+1}_t, R+1\big)$ and
2) next, some $p(t)=0$ implies $e_{j_{R+1}}^{c(j_{R+1})-p(1)-\cdots -p(R+1)}\neq 1$ commutes with all the root vectors on its right in \eqref{Eqn non-vanishing for F(b, R+1)s and ***} and kills $v_{\lambda}$. 
\smallskip\\
\underline{Step 3}: \big(Recall we assumed $K'=\emptyset$ above \eqref{Eqn clearing j-ks for contradiction in Theorem 6.5 proof}.\big)
By \eqref{Eqn clearing j-ks for contradiction in Theorem 6.5 proof}, every weight vector in \eqref{Eqn Lie words summing to z} is a positive integer multiple of $v\ :=\ f_{h_1}^{d_{h_1}}\cdots f_{h_m}^{d_{h_m}}\cdot v_{\lambda}$.
Our assumption $z=0$ in $V$ forces $v=0$, and so $v_{H}:= f_{h_1}^{\lambda(\alpha_{h_1}^{\vee})+1}\cdots f_{h_m}^{\lambda(\alpha_{h_m}^{\vee})+1}\ \cdot v_{\lambda}\ =0\ = \   V_{s_{h_1}\cdots s_{h_m}\bullet \lambda}$ (by $\mathfrak{sl}_2^{\oplus m}$-theory).
This contradicts the existence of $\mu_n\in \wt V$ at the beginning, explained below: 
Suppose $\mu_n= u -\sum\limits_{r}\gamma_r$ for some $u\in \wt_H V$, $\gamma_r\in \ \Delta_{H^c, 1}\cap \big( \alpha_{i_r} + \mathbb{Z}_{\geq 0}\Pi_H\big)$ and $i_r\in H^c$ (by Lemma \ref{Lemma forward inclusion in wt formulas}); with all the $\alpha_{i_r}$s occurring in $\sum_{r}\gamma_r$ sum to $\xi+\sum_{i\in H'}d_i\alpha_i$.
Then for every $t$: 1)~$\height_{\{h_t\}}(\gamma_r)\leq -\alpha_{i_r}(\alpha_{h_t}^{\vee})$ by $\mathfrak{sl}_2$-theory $\forall$ $r$; 2)~thereby $\height_{\{h_t\}}(\lambda-u) = d_{h_t}-\sum_{r}\height_{\{h_t\}}(\gamma_r) \geq  d_{h_t}- \xi(\alpha_{h_t}^{\vee}) \geq \lambda(\alpha_{h_t}^{\vee})+1$. 
Now $u\preceq s_{h_1}\cdots s_{h_m}\bullet\lambda$, but $u\in \wt_H V$ contradicts $s_{h_1}\cdots s_{h_m}\bullet \lambda\notin \wt_H V$.  
This proves the theorem when $K'=\emptyset$.

Following the discussion above \eqref{Eqn clearing j-ks for contradiction in Theorem 6.5 proof}, it remains to show Step 3 when $\emptyset\neq K'=\big\{j_{R'},\ldots, j_M\big\}\subseteq K$, with
$v'\ := f_{h_1}^{d_{h_1}}\cdots f_{h_m}^{d_{h_m}}\cdot  f_{j_{R'}}^{c(j_{R'})}\cdots f_{j_M}^{c(j_M)} \cdot  v_{\lambda}$ in place of $v$. 
Again by $\mathfrak{sl}_2^{\oplus m+M-R'+1}$-theory, $v'=0$ implies $v_{H\sqcup K'}:=f_{h_1}^{\lambda(\alpha_{h_1}^{\vee})+1}\cdots f_{h_m}^{\lambda(\alpha_{h_m}^{\vee})+1}\ \cdot \  f_{j_{R'}}^{\lambda(\alpha_{j_{R'}}^{\vee})+1}\cdots f_{j_M}^{\lambda(\alpha_{j_M}^{\vee})+1} \cdot v_{\lambda}\ =0\ = \   V_{s_{h_1}\cdots s_{h_m}\cdot s_{j_{R'}}\cdots s_{j_M}\bullet \lambda}$.
Fix $H_0\subseteq H\sqcup K'$ a minimal subset (so $H_0\subseteq J_{\lambda}$) with $\prod_{h\in H_0}f_h^{\lambda(\alpha_h^{\vee})+1}v_{\lambda}=0$ .
The definition of $J_V$, $H\subseteq J_V\subseteq J$ and $K'\subseteq J^c\cap J_{\lambda}$, together force $H_0 \subseteq H\subseteq J_V$.
We repeat the lines in the above paragraph in Step 3, with $H$ replaced by $H_0$ everywhere, which yields us the desired contradiction in this case.   
Hence, finally, the proof of Theorem \ref{Corollary local Weyl group invariance 2}(b) is complete.
\end{proof}
	 \subsection*{Acknowledgements} My deepest thanks to my Ph.D. advisor A. Khare, for his invaluable encouragement and discussions which helped to refine the results and also improve the exposition.
I sincerely thank  G. Dhillon and A. Prasad for posing some of the questions we solved in this paper; as well as R. Venkatesh for helpful discussions on the Parabolic-PSP and S. Viswanath for suggesting some references; also S. Kumar, V. Chari and S. Pal for their helpful feedback on the initial drafts of the paper.
     This work was supported by two scholarships from the National Board for Higher Mathematics \big(Ref. No. 2/39(2)/2016/NBHM/R{\&}D-II/11431 and 0204/16(8)/2022/R\&D-II/11979\big), and an institute Post Doctoral fellowship at the Harish-Chandra research Institute.
	 
  \bigskip
  \noindent
  	 \address{(G. Krishna Teja) \textsc{S - 16, Stat.\, \&\, Math. Unit, Indian Statistical Institute Bangalore Center, Bangalore, 560059,  India.}}
	 
	 \textit{E-mail address}: \email{\texttt{tejag@alum.iisc.ac.in}}

	 \end{document}